\documentclass{amsart}

\usepackage[T1]{fontenc}
\usepackage[utf8]{inputenc}

\usepackage[colorlinks,citecolor=blue,urlcolor=blue]{hyperref}

\usepackage[backend=bibtex,doi=true,isbn=false,url=false,
            style=alphabetic,maxbibnames=99,maxcitenames=99,
            maxalphanames=99]{biblatex}
\usepackage{amsthm,amsmath,amsfonts,amssymb}
\usepackage{microtype}
\usepackage{mathtools}
\usepackage{booktabs}
\usepackage{longtable}
\usepackage{multirow}
\usepackage[config, labelfont={normalsize}]{caption,subfig}
\usepackage[backgroundcolor=yellow]{todonotes}
\usetikzlibrary{decorations.pathreplacing}
\usetikzlibrary{decorations.pathmorphing}
\usetikzlibrary{decorations.markings}
\usetikzlibrary{colorbrewer}
\usetikzlibrary{arrows.meta}
\usetikzlibrary{patterns}
\usepackage{subfiles}

\colorlet{colA}{Paired-B}       
\colorlet{colB}{Paired-F}       
\colorlet{colC}{Paired-D}       
\colorlet{colD}{Paired-H}       

\colorlet{colOrange}{Paired-H}  
\colorlet{colBC}{colOrange}     
\colorlet{colP}{Paired-J}       

\colorlet{colGone}{Greys-G}     
\colorlet{colGtwo}{Greys-G}     

\tikzset{
  particleA/.style={colA, line width=1.5pt, solid},
  particleB/.style={colB, line width=0.8pt, double,
    double distance=2pt},
  particleC/.style={colC, line width=1.5pt, decorate,
    decoration={zigzag, segment length=4pt, amplitude=1pt}},
  particleD/.style={colD, line width=2pt, solid,
    postaction={decorate, decoration={markings,
      mark=between positions 2mm and 1 step 5mm
        with {\draw[-] (0,-3pt) -- (0,3pt);}}}},
}

\makeatletter
\@ifclasswith{amsart}{externalize}%
{
  \usetikzlibrary{external}
  \tikzexternalize[
    optimize=false,
    prefix=tikz/,
    mode=list and make]
  \renewcommand{\todo}[2][]{\tikzexternaldisable\@todo[#1]{#2}\tikzexternalenable}
}%
{
  \newcommand{\tikzexternaldisable}{}
  \newcommand{\tikzexternalenable}{}
}
\makeatother

\usepackage{aliascnt}
\usepackage{enumitem}
\usepackage[capitalize,noabbrev]{cleveref}

\numberwithin{equation}{section}

\DeclareMathOperator{\id}{id}
\DeclareMathOperator{\Pf}{Pf}

\theoremstyle{definition}
\newtheorem{definition}{Definition}[section]

\theoremstyle{plain}
\newaliascnt{theorem}{definition}
\newtheorem{theorem}[theorem]{Theorem}
\aliascntresetthe{theorem}

\newaliascnt{proposition}{definition}
\newtheorem{proposition}[proposition]{Proposition}
\aliascntresetthe{proposition}

\newaliascnt{lemma}{definition}
\newtheorem{lemma}[lemma]{Lemma}
\aliascntresetthe{lemma}

\newaliascnt{corollary}{definition}
\newtheorem{corollary}[corollary]{Corollary}
\aliascntresetthe{corollary}

\theoremstyle{remark}
\newaliascnt{example}{definition}
\newtheorem{example}[example]{Example}
\aliascntresetthe{example}

\newaliascnt{remark}{definition}
\newtheorem{remark}[remark]{Remark}
\aliascntresetthe{remark}

\newaliascnt{principle}{definition}
\newtheorem{principle}[principle]{Principle}
\aliascntresetthe{principle}

\crefname{theorem}{Theorem}{Theorems}
\Crefname{theorem}{Theorem}{Theorems}
\crefname{lemma}{Lemma}{Lemmas}
\Crefname{lemma}{Lemma}{Lemmas}
\crefname{corollary}{Corollary}{Corollaries}
\Crefname{corollary}{Corollary}{Corollaries}
\crefname{proposition}{Proposition}{Propositions}
\Crefname{proposition}{Proposition}{Propositions}
\crefname{definition}{Definition}{Definitions}
\Crefname{definition}{Definition}{Definitions}
\crefname{example}{Example}{Examples}
\Crefname{example}{Example}{Examples}
\crefname{remark}{Remark}{Remarks}
\Crefname{remark}{Remark}{Remarks}
\crefname{principle}{Principle}{Principles}
\Crefname{principle}{Principle}{Principles}

\usepackage{needspace}
\let\origsubsection\subsection
\renewcommand{\subsection}{\needspace{6\baselineskip}\origsubsection}
\let\origsubsubsection\subsubsection
\renewcommand{\subsubsection}{\needspace{3\baselineskip}\origsubsubsection}

\newcommand{\ZZ}{\mathbb{Z}}
\newcommand{\RR}{\mathbb{R}}
\newcommand{\PP}{\mathbb{P}}

\DeclareMathOperator{\sgn}{sgn}

\newcommand{\Actors}{\mathcal{A}}
\newcommand{\Roles}{\mathcal{R}}
\newcommand{\Ghosts}{\mathcal{G}}
\newcommand{\Survivors}{\mathcal{S}}
\newcommand{\FinalState}{\mathcal{F}}

\newcommand{\casting}{\mathcal{C}}
\newcommand{\perf}{\mathcal{P}}
\newcommand{\paths}{\mathbf{P}}

\newcommand{\Attribution}{\mathfrak{A}}

\newcommand{\Candidates}{\Pi_{\FinalState}}

\title[Particle annihilation]%
{Determinant and Pfaffian formulas \\ for particle annihilation}
\author{Piotr \'Sniady}
\address{Institute of Mathematics, Polish Academy of Sciences,
         ul.~\'Sniadeckich 8, \mbox{00-656~Warszawa,} Poland}
\email{psniady@impan.pl}

\begin{document}

\begin{abstract}
We consider systems of particles on a line in which
colliding particles annihilate each other and vanish.
Computing exact annihilation probabilities
is difficult because every collision reduces the particle
count, while determinantal methods require a fixed count
throughout. The ghost particle method, introduced in a
companion paper for coalescence, removes the obstacle:
destroyed particles continue walking as invisible ghosts, so
the number of trajectories never changes. Applied to
annihilation, the method yields an exact determinantal formula
for the probability of any prescribed outcome---the number of
annihilations, the survivor positions, and the positions of
the ghosts. For complete annihilation, where no particle
survives, the determinant collapses to a Pfaffian, an
algebraic relative of the determinant built from pairwise
quantities: although the particles interact, the extinction
probability is determined by pairwise annihilation
probabilities alone. This gives a combinatorial explanation of
the Pfaffian structure of annihilating systems, previously
derived through differential equations for specific dynamics.
The annihilation formula also yields results about
coalescence: the event that prescribed pairs of particles have
merged can be reinterpreted as complete annihilation,
producing a Pfaffian coalescence formula. All formulas are
exact for any finite initial configuration and apply to
discrete lattice paths, birth-death chains, and continuous
diffusions including Brownian motion.
\end{abstract}

\subjclass[2020]{Primary 05A15; Secondary 05A19, 15A15, 60C05, 60J65, 82C22}

\keywords{annihilating random walks, ghost particles,
    Lindstr\"om--Gessel--Viennot lemma, Pfaffian,
    determinant, lattice paths, interacting particle systems}

\maketitle

\vspace{1em}
\begin{quote}
\textit{To Marek Bożejko, \\
who taught me that in creation and annihilation,\\
order is everything.}
\end{quote}
\vspace{1em}


\section{Introduction}\label{sec:intro}

\subsection{The problem}
\label{sec:intro-problem}

Consider $n$ particles performing independent random walks
on a one-dimensional universe for which a version of the
\emph{`Darboux property'} holds true: two particles cannot
swap order without being at some intermediate moment in the
same state. (We postpone the formal definition to the
crossing property of \cref{def:planar}; a concrete example
to keep in mind is a system of simple random walks, each
taking $\pm 1$ steps and starting at even integers, see 
\cref{fig:intro-annihilation}.)
When two particles meet, they annihilate: both are destroyed. This
$A + A \to \emptyset$ reaction-diffusion model appears throughout
statistical physics. In the one-dimensional Ising--Glauber
model~\cite{Glauber1963}, a spin chain evolves by single-spin
flips, each occurring at a rate determined by the neighboring
spins; at zero temperature the domain walls---boundaries
between spin-up and spin-down regions---perform random walks
on the dual lattice, and when two walls meet, the intervening
region disappears and both walls are destroyed. The same
dynamics governs models of diffusion-limited chemical kinetics
in low dimensions~\cite{benAvrahamHavlin2000}.
\emph{What is the probability that exactly $k$ annihilations
occur, with survivors reaching specified positions?}

For \emph{non-colliding} particles, exact probabilities are
classical. The Karlin--McGregor theorem~\cite{KM1959} and its
combinatorial cousin, the Lindstr\"om--Gessel--Viennot (LGV)
lemma~\cite{Lindstrom1973,GV1985}, express the probability
that $n$ particles starting at positions
$x_1 \leq \cdots \leq x_n$ reach positions
$y_1 \leq \cdots \leq y_n$ without colliding as a determinant:
\[
\PP(\text{particles reach } y_1, \ldots, y_n
  \text{ without colliding})
= \det \bigl( p(x_i \to y_j) \bigr)_{1 \leq i,j \leq n},
\]
where $p(x_i \to y_j)$ is the transition probability from $x_i$
to $y_j$.

When particles annihilate, the count decreases. After $k$ collisions,
only $n - 2k$ particles remain. The number of rows (initial particles)
exceeds the number of columns (final particles), and there is no
square matrix to write down. This dimensional mismatch places
annihilating systems outside the scope of the classical LGV framework.

\subsection{The ghost method}
\label{sec:intro-ghost}

The companion paper~\cite{SU2026coalescence} introduced
\emph{ghost particles} for coalescence: when two particles merge,
one heir and one ghost emerge, keeping the entity count at~$n$
and enabling determinantal formulas.

For annihilation, the same idea applies with a twist: when two
particles annihilate, an ordered pair of ghosts emerges from the
collision point---invisible walkers that perform independent
random walks onward, without interacting with anything.
The ghosts are anonymous: only the pairing of the
two ghosts born together is
preserved (\Cref{sec:setup-annihilation};
see also \Cref{sec:intro-scope-structure}).
The entity
count---survivors plus ghosts---remains exactly~$n$, restoring
the dimensional structure needed for determinantal methods.
We call this variant the \emph{ghost pair method}.
\Cref{fig:intro-annihilation} illustrates: particles~$2$
and~$3$ annihilate at~$c$, and a ghost pair emerges, while
particles~$1$ and~$4$ survive.

At a structural level, the ghost pair method is a
minimal extension of the Karlin--McGregor theorem to
annihilation: the same sign-reversing involution
applied to the same determinantal language, with one
new ingredient---ghost pairs that restore the matrix
to square form.


\begin{figure}[t]
\centering

\tikzset{
  ghostPairA/.style={colGone, line width=1.5pt, dashed,
    dash pattern={on 4pt off 2pt}},
  ghostPairB/.style={colGone, line width=1.5pt, dashed,
    dash pattern={on 4pt off 2pt}, double, double distance=2pt},
}

\begin{tikzpicture}[scale=0.55]
  \begin{scope}
    \clip (-0.2,-0.2) rectangle (11.2,6.2);

    \foreach \x in {-1,...,12} {
      \foreach \t in {-1,...,7} {
        \pgfmathparse{mod(\x+\t,2)==0 ? 1 : 0}
        \ifnum\pgfmathresult>0
          \fill[gray!30] (\x,\t) circle (1.5pt);
        \fi
      }
    }

    \foreach \x in {-2,0,2,4,6,8,10,12} {
      \foreach \t in {-2,0,2,4,6,8} {
        \draw[gray!30, thin] (\x,\t) -- (\x-1,\t+1);
        \draw[gray!30, thin] (\x,\t) -- (\x+1,\t+1);
      }
    }
    \foreach \x in {-1,1,3,5,7,9,11} {
      \foreach \t in {-1,1,3,5,7} {
        \draw[gray!30, thin] (\x,\t) -- (\x-1,\t+1);
        \draw[gray!30, thin] (\x,\t) -- (\x+1,\t+1);
      }
    }
  \end{scope}

  \draw[->, thick] (-0.3,0) -- (11,0) node[right] {$x$};
  \draw[->, thick] (0,-0.3) -- (0,6.7) node[above] {$t$};

  \draw (-0.1, 6) -- (0.1, 6);
  \node[left] at (-0.15,6) {\small $T$};

  \fill[colA] (4,0) circle (4pt);
  \fill[colB] (6,0) circle (4pt);
  \fill[colC] (8,0) circle (4pt);
  \fill[colD] (10,0) circle (4pt);

  \node[below] at (4,-0.15) {\small $x_1$};
  \node[below] at (6,-0.15) {\small $x_2$};
  \node[below] at (8,-0.15) {\small $x_3$};
  \node[below] at (10,-0.15) {\small $x_4$};


  \draw[ghostPairA] (6,2) -- (5,3);
  \draw[ghostPairA, transform canvas={shift={(0.12,0.12)}}]
    (5,3) -- (4,4) -- (3,5);
  \draw[ghostPairA] (3,5) -- (2,6);

  \draw[ghostPairB] (6,2) -- (7,3);
  \draw[ghostPairB, transform canvas={shift={(0.1,-0.1)}}]
    (7,3) -- (8,4) -- (9,5);
  \draw[ghostPairB] (9,5) -- (8,6);


  \draw[particleA] (4,0) -- (3,1) -- (4,2) -- (5,3) -- (4,4)
    -- (3,5) -- (4,6);

  \draw[particleB] (6,0) -- (5,1) -- (6,2);

  \draw[particleC] (8,0) -- (7,1) -- (6,2);

  \draw[particleD] (10,0) -- (9,1) -- (8,2) -- (7,3) -- (8,4)
    -- (9,5) -- (10,6);

  \fill[black] (6,2) circle (5pt);
  \node[left=0.2] at (6,2) {\small $c$};

  \fill[colA] (4,6) circle (4pt);                                    
  \fill[colD] (10,6) circle (4pt);                                   
  \draw[colGone, fill=white, line width=1.5pt] (2,6) circle (4pt);   
  \draw[colGone, fill=white, line width=1.5pt] (8,6) circle (4pt);   

  \node[above] at (2,6.15) {\small $a$};
  \node[above] at (4,6.15) {\small $y_1$};
  \node[above] at (8,6.15) {\small $b$};
  \node[above] at (10,6.15) {\small $y_2$};

  \begin{scope}[shift={(11,0.8)}]
    \draw[particleA] (0,5.2) -- (0.7,5.2);
    \node[right] at (0.8,5.2) {\small particle $1$ (survivor)};
    \draw[particleB] (0,4.4) -- (0.7,4.4);
    \node[right] at (0.8,4.4) {\small particle $2$};
    \draw[particleC] (0,3.6) -- (0.7,3.6);
    \node[right] at (0.8,3.6) {\small particle $3$};
    \draw[ghostPairA] (0,2.4) -- (0.7,2.4);
    \draw[ghostPairB] (0,1.8) -- (0.7,1.8);
    \node[right] at (0.8,2.1) {\small ghost pair};
    \draw[particleD] (0,0.6) -- (0.7,0.6);
    \node[right] at (0.8,0.6) {\small particle $4$ (survivor)};
  \end{scope}

\end{tikzpicture}

\caption{\textbf{Annihilation on the checkerboard lattice.}
Four particles start at $x_1 < x_2 < x_3 < x_4$. Particles~$2$
(double) and~$3$ (zigzag) annihilate at~$c$; both are destroyed
and an ordered pair of ghosts emerges (dashed paths). Particles~$1$ (solid)
and~$4$ (tick marks) survive. Ghost paths freely cross survivor paths
(shown offset)---ghosts do not interact. Final positions:
$a < y_1 < b < y_2$.}
\label{fig:intro-annihilation}

\end{figure}

\subsection{Main results}
\label{sec:intro-results}

\subsubsection{The annihilation formula}

Consider $n$ particles starting at $x_1 \leq \cdots \leq x_n$.
Suppose $k$ collisions occur, producing $s = n - 2k$ survivors
at positions $y_1 \leq \cdots \leq y_s$ (weakly increasing)
and $k$ ghost pairs. The
ghost pairs are physically indistinguishable---they do not
remember which particles created them---so we label them
$1, \ldots, k$ by sampling a uniform random numbering. Ghost
pair~$j$ then has positions $(a_j, b_j)$.

\begin{theorem}[Annihilation formula, preview]
\label{thm:intro-annihilation}
For a fixed final state---survivor positions $y_1, \ldots, y_s$
and ghost pair positions $(a_1, b_1), \ldots, (a_k, b_k)$---the
probability is:
\[
\PP = \frac{1}{k!} \det(M),
\]
where $M$ is the $n \times n$ matrix described below.
\end{theorem}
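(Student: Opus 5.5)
The plan is a Lindstr\"om--Gessel--Viennot style expansion of $\det(M)$ with a modified sign-reversing involution. I take $M$ to have rows indexed by the initial particles $x_1,\dots,x_n$ and $n=s+2k$ columns: one column for each survivor position $y_j$, and two adjacent columns for each ghost pair $(a_j,b_j)$. The basic entries are transition weights $p(x_i\to y_j)$, $p(x_i\to a_j)$, $p(x_i\to b_j)$. The ghost-pair column blocks may carry a modification, for instance a sign or ordering convention that distinguishes the ``left'' ghost from the ``right'' one. This is exactly what the forthcoming description of $M$ must pin down, and it is where I expect the real content to lie.

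\textbf{Step 1 (expansion).} Expand $\det(M)=\sum_{\sigma}\sgn(\sigma)\prod_i M_{i,\sigma(i)}$. Since the walks are independent, each product is a sum over families of $n$ independent trajectories, each going from some $x_i$ to its assigned endpoint. This gives a signed sum over path families with endpoint assignments.

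\textbf{Step 2 (attribution).} Define a deterministic map from annihilation histories to path families. Run the annihilation dynamics on the given independent trajectories: at each meeting of two live particles, both die, and their continuations after the collision time become the two ghosts of a pair. An actual history, together with a uniform numbering of its $k$ ghost pairs, then corresponds to a family whose endpoints are the $y$'s and the $(a_j,b_j)$. Since there are $k!$ numberings, weighting each by $1/k!$ recovers $\PP$. Equivalently, I will show that $\det(M)$ counts each unlabeled history exactly $k!$ times, each time with sign $+1$.

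\textbf{Step 3 (involution).} Call a path family \emph{good} if the assignment is consistent with the annihilation history the paths generate. This requires:
\begin{enumerate}[label=(\roman*)]
\item survivor columns are reached in increasing order by paths that never meet a live particle;
\item each ghost pair's two columns are reached by the continuations of the two particles that annihilated together.
\end{enumerate}
On bad families I define a tail swap. Take the earliest ``illegal'' meeting, meaning either a meeting of two live paths whose outcome contradicts the assignment, or a live path meeting a pair it should not have produced. Swap the two paths after that meeting point. By the crossing (Darboux) property of \cref{def:planar}, a swap is available whenever orders must change. The swap preserves the total weight and transposes two columns of $\sigma$, so it reverses the sign. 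Ghost paths never interact, so meetings involving already-dead particles are ignored when searching for the earliest illegal meeting; this is what lets ghosts cross survivors freely.

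\textbf{Step 4 (signs of good families).} Check that every good family has sign $+1$ under the chosen ordering of columns. Survivors are order preserving by the usual Karlin--McGregor argument. For a ghost pair, the two continuations out of the collision point $c$ can be assigned to $(a_j,b_j)$ in two ways with equal weight, and these two choices have opposite signs in the plain determinant. Therefore the definition of $M$ has to break this symmetry, for example by fixing the orientation through the ordered pair convention. I must verify that the ordered-pair convention makes exactly one of the two assignments contribute, or makes both contribute with the same sign, consistently with $1/k!$ and not $1/(2^k k!)$.

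\textbf{Main obstacle.} I expect the hardest part to be proving that the involution in Step 3 is genuinely an involution. The tail swap changes which particles are live after the swap point, so it could alter the earlier history or the location of the ``first illegal meeting''. I will try to show that swapping only after the earliest illegal event leaves the history before it unchanged, and that the same event is still the earliest illegal one after the swap. Together with the ghost sign convention in Step 4, that is the delicate point.
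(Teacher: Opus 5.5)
Your outline (Leibniz expansion, attribution, a time-ordered scan with a tail swap, a sign check) follows the paper's architecture. The gap is exactly where you deferred it. The modification of the ghost columns is not a detail to be read off later: it is the mechanism that makes the formula true, and its form is forced.

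\textbf{Why the ghost columns must change, and how.} Take the plain determinant with $n=2$, $k=1$. The two assignments of an annihilating pair are exchanged by the swap at their meeting point and cancel. What remains, for $a\prec b$, is the LGV weight of \emph{non-intersecting} pairs, i.e.\ of histories with no annihilation at all. An index convention (keep only the term where the lower actor takes slot $(j,1)$) is no better. It leaves $W(x_1\to a)W(x_2\to b)$, which sums over all $(a,b)$ to the total mass $1$ rather than to the annihilation probability.

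The paper's matrix carries formal variables $\tau^{(j)\pm}_I$ with the rule \eqref{eq:formal}. A term survives only if the \emph{higher}-indexed actor of each pair is sent to the \emph{leftward} position (a candidate, \Cref{def:candidate}), and it then carries the factor $-\varepsilon_j$. For $n=2$ and $a\preceq b$ this leaves $W(x_1\to b)W(x_2\to a)$: the weight of pairs with reversed or coincident endpoints, all of which meet.

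Accordingly, your natural-continuation attribution in Step~2 must be replaced by far-side gluing (\Cref{principle:swap}). The higher-indexed particle's prefix is glued to whichever ghost path ends at the leftward position, regardless of which trajectory ``really'' continued there. This also settles your $1/k!$ versus $1/(2^k k!)$ question. Exactly one slot assignment per pair survives, and it corresponds to exactly one performance with its ordered ghost pair. So $\det(M)$ counts numbered performances, and the uniform numbering gives $\PP=\det(M)/k!$.

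\textbf{Why Steps 3 and 4 cannot be closed without candidacy.} First, your involution is undefined on a stall: a family with no illegal meeting in which the two actors of some ghost pair never meet. Candidacy rules this out. It places the two endpoints of each pair in weakly reversed order (\Cref{lem:no-ghosts-order}), so their paths must meet, by the crossing property~\ref{item:crossing} or at a shared endpoint (\Cref{lem:crossings-exist}).

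Second, once non-candidate terms are killed, the swap must map surviving terms to surviving terms. Swapping $I<J$ replaces $\pi$ by $(I\,J)\circ\pi$, which breaks candidacy if the ghost partner $K$ of $I$ or $J$ satisfies $I<K<J$. This is excluded by the consecutive collision property~\ref{item:consecutive}, which you never use. The particles at the first crossing form a contiguous block of the active set (\Cref{prop:adjacent-crossings}), so $I$ and $J$ are adjacent among active particles. Meanwhile $K$ is still active by pair-completeness, hence lies outside $[I,J]$ (\Cref{lem:key-lemma}).

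That is the delicate step, not the one you flag. The swap changes only suffixes after the crossing vertex $v$, all of which come later in the linear order. So the run up to $v$ is unchanged, and the verdict at $v$ depends only on the unordered set $\{\pi(I),\pi(J)\}$.

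Third, adjacency is needed again for the sign in Step~4. Index pairs of distinct ghost pairs are nested or disjoint, and no survivor index lies between an annihilating pair. Hence all inversions other than the within-pair ones come in even numbers, giving $\sgn(\pi)=(-1)^{\#\{j:\varepsilon_j=+1\}}$ (\Cref{prop:sign-identity}). This cancels the formal factor $\prod_j(-\varepsilon_j)$.
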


The factor $1/k!$ is the probability of the specific numbering
$1, \ldots, k$; the body of the paper works with weights rather
than probabilities, and there the numbering-free statement reads
$Z = \det(M)$ (\Cref{thm:annihilation}).

The matrix $M$ has rows indexed by initial particles and
columns indexed by final entities (survivors and ghosts). Survivor
columns contain transition probabilities $p(x_i \to y_\ell)$. Ghost
columns additionally carry formal variables $\tau^{(j)\pm}_I$,
indexed by the ghost pair~$j$ and the row~$I$; their products
evaluate to $0$ or $\pm 1$, depending on whether the two
particles assigned to ghost pair~$j$ are ordered consistently
with the \emph{ghost sign} $\varepsilon_j$ ($+1$ if
$a_j \leq b_j$, that is, the ghost pair is ordered left-right;
$-1$ if $a_j > b_j$). The precise
statement, with transition probabilities generalized to path
generating functions, is \Cref{thm:annihilation}.

\subsubsection{Example: four particles, one collision}

Four particles start at $x_1 \leq x_2 \leq x_3 \leq x_4$,
with one collision ($k = 1$). Two survivors end at
$y_1 \leq y_2$ and one ghost pair at positions $a$ and $b$
(as in \Cref{fig:intro-annihilation}).

The $4 \times 4$ matrix $M$ has two survivor columns and two
ghost columns (one pair). Since $k = 1$, we drop the pair
index, writing $\tau^\pm_I$ for $\tau^{(1)\pm}_I$:
\[
M = \begin{pmatrix}
p(x_1 \to y_1) & p(x_1 \to y_2) &
  \tau^+_1 p(x_1 \to a) & \tau^-_1 p(x_1 \to b) \\
p(x_2 \to y_1) & p(x_2 \to y_2) &
  \tau^+_2 p(x_2 \to a) & \tau^-_2 p(x_2 \to b) \\
p(x_3 \to y_1) & p(x_3 \to y_2) &
  \tau^+_3 p(x_3 \to a) & \tau^-_3 p(x_3 \to b) \\
p(x_4 \to y_1) & p(x_4 \to y_2) &
  \tau^+_4 p(x_4 \to a) & \tau^-_4 p(x_4 \to b)
\end{pmatrix}.
\]
The first two columns are plain transition probabilities (survivor
columns). The last two columns carry additional formal variables
$\tau^\pm_I$ indexed by the row. Writing $\varepsilon$ for the
ghost sign ($+1$ if $a \leq b$, $-1$ if $a > b$), these
variables obey the multiplication rule
\[
\tau^+_I \, \tau^-_J = \begin{cases}
-1 & \text{if } \varepsilon = +1 \text{ and } I > J, \\
+1 & \text{if } \varepsilon = -1 \text{ and } I < J, \\
\phantom{+}0 & \text{otherwise},
\end{cases}
\]
so the only surviving terms in $\det(M)$ are those in which
the higher-indexed of the two ghost actors is sent to the
leftward ghost position.

To compute the annihilation probability, expand $\det(M)$ by
Laplace expansion along the two ghost columns. Each term
assigns a pair of particles $\{I, J\}$ with $I < J$ to the
ghost pair and the remaining two to the survivors. Working
out the multiplication rule gives, for either sign
of~$\varepsilon$, a single formula:
\begin{multline}\label{eq:intro-laplace}
\PP = \frac{1}{1!} \sum_{\substack{I < J \\
  \{I,J\} \subset \{1,2,3,4\}}}
(-1)^{I+J+1}\,
p\bigl(x_I \to \max(a,b)\bigr)\,
p\bigl(x_J \to \min(a,b)\bigr) \\
\cdot \det \bigl( p(x_i \to y_\ell) \bigr)_{
  \substack{i \in \{1,\ldots,4\} \setminus \{I,J\} \\
  \ell \in \{1,2\}}},
\end{multline}
where the sign $(-1)^{I+J+1}$ comes from the Laplace
expansion along columns~$3$ and~$4$, and each $2 \times 2$
determinant is a Karlin--McGregor probability for the two
surviving particles. The higher-indexed particle~$J$ ends at $\min(a,b)$
and the lower-indexed particle~$I$ at $\max(a,b)$: the
index ordering is reversed relative to the ghost pair's
spatial ordering.

\subsubsection{The Pfaffian connection}

For complete annihilation (all particles destroyed), the
formula simplifies once we no longer prescribe the final
positions of the ghosts. Summing over them, the probability
that $n = 2k$ particles annihilate completely is a Pfaffian
of pairwise quantities:
\[
\PP = \Pf(A),
\]
where $A_{IJ}$ measures the total weight of crossing paths
for particles~$I$ and~$J$---equivalently, the probability
that $I$ and~$J$ would annihilate if they were the only two
particles. For $n = 4$ particles the Pfaffian has three
summands, one per perfect matching of $\{1, 2, 3, 4\}$:
\[
\PP = A_{12}\, A_{34} - A_{13}\, A_{24} + A_{14}\, A_{23};
\]
the negative term corresponds to the matching
$\{1,3\}, \{2,4\}$, which no collision sequence can realize;
its role is to cancel the spurious contributions hidden in
the other two products (\Cref{ex:pfaffian-n4} works out the
details).

Why a Pfaffian? Coalescing and annihilating particle
systems are known to be governed by \emph{Pfaffian} point
processes~\cite{TZ2011,GarrodPTZ2018}
(\Cref{sec:intro-prior}). For coalescence this is at first
sight puzzling: nothing in the merging rule suggests
pairings. Annihilation is where the pairing is visible.
Complete annihilation of $2k$ particles requires them to
pair up, each pair meeting and destroying both members, and
the ways to pair $2k$ particles are the perfect matchings
of $\{1, \ldots, 2k\}$---exactly the combinatorial objects
that define a Pfaffian. The ghost pair structure makes this
correspondence precise: the Leibniz expansion of the
determinant groups its terms by matching, with each
matching assigning particles to ghost pairs. Coalescence,
by contrast, involves compositions (how many particles
merge into each heir) and inherits the Pfaffian structure
only through its reduction to annihilation: the
\emph{cancellative labeling} of \Cref{sec:pfaffian}
converts pairwise coalescence into complete annihilation.
The companion paper~\cite{Sniady2026pfaffian} develops the
consequences for coalescing particle systems in full:
there, empty intervals of a coalescing system are exactly
complete-annihilation events, which is how the Pfaffian
point processes above arise.

\subsubsection{Relation to the LGV lemma}
\label{sec:intro-lgv}

The annihilation formula generalizes the
Karlin--McGregor / Lindstr\"om--Gessel--Viennot (LGV)
determinant, just as the coalescence formula
does~\cite{SU2026coalescence}. In both cases, the proof
uses a sign-reversing involution that swaps path segments at
the first wrong crossing---but in annihilation, ``wrong''
means ``crossing that violates the prescribed collision
pattern'' rather than any crossing at all. When no collisions
occur (all particles survive, no ghosts), the formula reduces
to the standard LGV determinant.
See~\cite{SU2026coalescence} for the full discussion of
the LGV connection, including the relationship to
Stembridge's $D$-compatibility~\cite{Stembridge1990}.

\subsection{Prior work}
\label{sec:intro-prior}

\subsubsection{Pfaffian point processes}
\label{sec:pfaffian-point-processes}

Tribe and Zaboronski~\cite{TZ2011} proved Pfaffian
point process structure for coalescing and
annihilating Brownian
motions under the maximal entrance law: their
Pfaffian structure (Theorem~2 in~\cite{TZ2011}) is
exact, while their $n$-point density formula
(Theorem~1) is asymptotic for large~$t$. Garrod,
Poplavskyi, Tribe, and
Zaboronski~\cite{GarrodPTZ2018} extended this to
continuous-time random walks on~$\ZZ$ with spatially
inhomogeneous rates and all deterministic initial
conditions, covering mixed
coalescence-annihilation ($\theta \in [0,1]$). Tribe
and Zaboronski~\cite{TribeZaboronski2026} further
extended this to all entrance laws. These proofs
rest on a time-homogeneous Markov generator and the
\emph{spin-pair identity}: expectations of products
of pairwise spin variables satisfy a closed system
of ordinary differential equations (ODE), which the
candidate Pfaffian also solves, with the same
initial condition; the two agree by uniqueness.
Conversely, they reach results that the
ghost method does not: mixed
coalescence-annihilation with arbitrary~$\theta$, and
the classification of all entrance laws.

In an earlier, separate direction,
Mattera~\cite{Mattera2003} obtained a
Kasteleyn--Pfaffian description of annihilating
random walks on one specific lattice;
\Cref{sec:pfaffian-discussion} discusses the
relationship.

\subsubsection{The IPDF method}

The interparticle distribution function (IPDF) method
computes density and correlation functions by tracking
the probability distribution of interparticle gaps.
Doering and
ben-Avraham~\cite{DoeringBenAvraham1988} introduced
it for diffusion-limited coalescence;
ben-Avraham~\cite{benAvraham1998} extended it to
the full hierarchy of $n$-point correlation
functions; Masser and
ben-Avraham~\cite{MasserBenAvraham2001} adapted it
to annihilation via the method of intervals (see
also the monograph of ben-Avraham and
Havlin~\cite{benAvrahamHavlin2000}). Unlike
the generator/ODE approach, the IPDF method does
not require a Markov generator, but relies on
Brownian-motion-specific integrals.

\subsubsection{Cancellative duality}

A parity correspondence converts coalescence into
annihilation: this is the classical cancellative
duality of Griffeath~\cite{Griffeath1979}, which
\Cref{sec:pfaffian} recasts deterministically in the
ghost framework to yield the Pfaffian formula. The
attribution and mechanism are given there.

\subsection{Scope and structure of the method}
\label{sec:intro-scope-structure}

Beyond the specific formulas, the ghost pair method has
several structural features---wide applicability, fine
resolution, and the anonymity of its ghosts---that
distinguish it from the analytic approaches.

\subsubsection{Wide scope}

The ghost pair method is combinatorial, as in the
companion coalescence
paper~\cite{SU2026coalescence}. It requires
only the Karlin--McGregor assumptions: identical,
independent dynamics, order preservation, the
strong Markov property, and the requirement that
meeting times are stopping times~\cite{KM1959}. These hold for any
\emph{skip-free} process---one whose transitions go
only to neighboring states, so that particles cannot
change order without first meeting. No generator, no
PDE, and no spin-pair identity
(\Cref{sec:pfaffian-point-processes}) is needed. The formula
therefore applies uniformly to lattice paths,
birth-death chains, and Brownian motion, including
discrete lattices with arbitrary inhomogeneous
transition probabilities (varying in both space and
time) where no generator is available. The proofs are
given in the discrete spacetime-graph setting
(\Cref{sec:setup}), where the combinatorial core is
cleanest; the continuous statements (Brownian motion,
birth-death chains) are given in \Cref{sec:continuous}
and follow by the measure-theoretic transfer scheme of
the companion paper~\cite{SU2026coalescence}.

\subsubsection{Exact finite-time formulas}

The ghost formula gives exact fixed-time probabilities
for specific outcomes of finite configurations: how many
annihilations occur by time~$T$, where the survivors end
up, and where the ghost pairs end up. This is finer
resolution than the density and correlation functions
provided by the analytic approach, which describe the
statistical structure of infinitely many particles but
not the outcome of a prescribed finite configuration.
For the systems of \Cref{sec:intro-problem}, this means
exact transition probabilities for any finite
configuration of domain walls or reacting particles.

\subsubsection{Ghost anonymity}

Ghost pairs do not remember which initial particles
produced them. Can this be refined to prescribe
\emph{which} particles annihilate? \Cref{sec:prescribed}
provides computational evidence that no such
refinement exists: the prescribed annihilation
probability cannot be expressed as any linear
combination of Karlin--McGregor products, even
allowing arbitrary rational coefficients. This evidence indicates that
ghost anonymity is not a bookkeeping convenience
but a structural necessity.

\subsection{Companion papers}
\label{sec:intro-companion}

This paper stands between two companion papers on
coalescence ($A + A \to A$): it adapts the ghost method
from the first, and it supplies the key combinatorial
input to the second.

The foundational companion
paper~\cite{SU2026coalescence} develops the coalescence
formula, a ghost-free coalescence determinant (integrating
out ghost positions), and the full continuous-time
treatment, whose transfer scheme \Cref{sec:continuous}
reuses. The two papers share the same proof
architecture---sign-reversing involution via segment
swap---but differ in the ghost structure: annihilation
produces ghost pairs with paired formal variables, while
coalescence produces single ghosts with a staircase
pattern.

In the opposite direction, the Pfaffian companion
paper~\cite{Sniady2026pfaffian} applies the Pfaffian
pairwise coalescence formula proved here to the walls of
the coalescing process---the boundaries between the sets
of sites whose particles merge into the same
survivor---obtaining an empty-interval formula, a cumulant
coloring formula, and a central limit theorem for the wall
count. Without the Pfaffian reduction proved here, these
results would have no combinatorial foundation.

\subsection{Organization}
\label{sec:intro-organization}

\Cref{sec:setup,sec:formula} set up the annihilation
model and state the formula.
\Cref{sec:proof} proves it via a sign-reversing
involution. \Cref{sec:continuous} states the continuous
versions of the results. \Cref{sec:pfaffian} derives the
Pfaffian pairwise coalescence formula by converting
coalescence to complete annihilation via the cancellative
labeling. \Cref{sec:prescribed} reports the computational
evidence on necessity of ghost anonymity.

The companion code, which numerically verifies the
annihilation formula (\Cref{thm:annihilation}) and the
Pfaffian formula (\Cref{thm:pfaffian}) and performs
the computations reported in \Cref{sec:prescribed}, is
archived at~\cite{Sniady2026companion}.

\section{Setup}\label{sec:setup}

The annihilation formula holds for random walks on~$\ZZ$,
Brownian motion on~$\RR$, and birth-death chains on arbitrary
state spaces. Following the companion
paper~\cite{SU2026coalescence}, we work with
\emph{spacetime graphs}---a combinatorial abstraction that
captures two structural properties common to all these
settings:
\begin{enumerate}[label=(\roman*)]
\item \textbf{Planarity}: paths with swapped endpoints
  must cross, and non-adjacent particles cannot meet
  without an intermediate particle involved;
\item \textbf{Weight-preserving segment swap}: exchanging
  path segments at a shared vertex preserves the product
  of weights.
\end{enumerate}
For discrete models, the spacetime graph is literal; for
continuous processes, the combinatorial structure is identical
and the results are stated in \Cref{sec:continuous}, with
proofs by the measure-theoretic transfer scheme of the
companion paper~\cite{SU2026coalescence}.

\subsection{Spacetime graphs}
\label{sec:setup-spacetime}

\begin{definition}[Spacetime graph]\label{def:spacetime-graph}
A \emph{spacetime graph} is a directed, acyclic graph
$D = (V, E)$ with edge weights $w\colon E \to R$, where
$R$ is a commutative ring. The acyclicity induces a
\emph{time ordering}: $u \lessdot v$ if there is a
directed path from $u$ to~$v$. This is a partial order;
we fix a linear extension of~$\lessdot$. The phrase
``first crossing'' (used in the proof,
\Cref{sec:proof-rehearsal}) means first in this linear
order; the proof works for any such extension.
\end{definition}

\begin{definition}[Paths and weights]\label{def:path-weight}
A \emph{path} from $x$ to $y$ is a sequence of vertices
$(v_0, \ldots, v_\ell)$ with $v_0 = x$, $v_\ell = y$, and each
$(v_i, v_{i+1}) \in E$. The \emph{weight} of a path is
$w(P) = \prod_{i} w(v_i \to v_{i+1})$.
The \emph{path generating function} is
\[
W(x \to y) = \sum_{P: x \to y} w(P).
\]
\end{definition}

When the edge weights are the one-step transition
probabilities of a random walk, $W(x \to y)$ is the
probability that the walk moves from $x$ to~$y$, and
every weight in this paper---including the total
weight~$Z$ of \Cref{thm:annihilation}---becomes a
probability, as in \Cref{sec:intro}.

\subsection{Planarity}
\label{sec:setup-planarity}

\begin{definition}[Source and target sets]\label{def:source-target}
The \emph{source set} $\mathcal{X} \subseteq V$ and \emph{target set}
$\mathcal{Y} \subseteq V$ are each equipped with a linear order $\prec$.
\end{definition}

\begin{definition}[Planar configuration]\label{def:planar}
The pair $(\mathcal{X}, \mathcal{Y})$ is \emph{planar} if:
\begin{enumerate}[label=(P\arabic*)]
\item \label{item:crossing} \textbf{Crossing property.}
  For $x \prec x'$ in $\mathcal{X}$ and $y' \prec y$ in $\mathcal{Y}$
  (targets swapped), every path from $x$ to $y$ intersects every path
  from $x'$ to $y'$.
\item \label{item:consecutive} \textbf{Consecutive collision property.}
  For $x \prec x' \prec x''$ in $\mathcal{X}$, if paths from $x$ and $x''$
  meet at vertex $v$, then every path from $x'$ must pass through $v$ or
  intersect one of those paths before $v$.
\end{enumerate}
\end{definition}

The crossing property~\ref{item:crossing} is Stembridge's
$D$-compatibility~\cite{Stembridge1990}: paths with swapped
endpoints must meet (see \Cref{fig:planarity-p1}). The
classical LGV lemma needs only this
condition, because it forbids all crossings. The consecutive
collision property~\ref{item:consecutive}, introduced in the
companion paper~\cite{SU2026coalescence}, is needed
because we allow collisions but require them to respect the
spatial ordering: non-adjacent particles cannot collide
without involving intermediate ones
(see \Cref{fig:planarity-p2}). Both properties hold
for lattice paths, random walks on~$\ZZ$, birth-death
chains, and Brownian motion;
see~\cite{SU2026coalescence} for verification. The
consecutive collision property is essential for the
sign-reversing involution: it ensures that when two paths
cross, the corresponding particles are adjacent in the
active set (\Cref{prop:adjacent-crossings}).

\tikzset{
  nosign radius/.initial=0.38,
  nosign/.pic={
    \draw[red, line width=1.3pt] (0,0)
      circle[radius={\pgfkeysvalueof{/tikz/nosign radius}}];
    \draw[red, line width=1.3pt]
      (-135:{\pgfkeysvalueof{/tikz/nosign radius}})
      -- (45:{\pgfkeysvalueof{/tikz/nosign radius}});
  },
}


\begin{figure}[t]
\centering
\captionsetup[subfigure]{justification=centering}

\subfloat[]{%
\begin{tikzpicture}[scale=0.6,
    vertex/.style={circle, fill, inner sep=1.5pt}]

\draw[gray, thick] (0, 0) -- (6, 0);
\draw[gray, thick] (0, 5) -- (6, 5);
\node[right] at (6.1, 0) {$\mathcal{X}$};
\node[right] at (6.1, 5) {$\mathcal{Y}$};

\node[vertex, colB] (a0) at (1, 0) {};   \node[below] at (a0) {$A$};
\node[vertex, colB] at (5, 5) {};
\node[vertex, colA] (b0) at (5, 0) {};   \node[below] at (b0) {$B$};
\node[vertex, colA] at (1, 5) {};

\draw[particleA] (5, 0) -- (1, 5);
\draw[particleB] (1, 0) -- (2.4, 1.75);
\draw[white, line width=6pt]
  (2.4, 1.75) to[out=95, in=175] (3.0, 2.95) to[out=-5, in=140] (3.6, 3.25);
\draw[particleB]
  (2.4, 1.75) to[out=95, in=175] (3.0, 2.95) to[out=-5, in=140] (3.6, 3.25);
\draw[particleB] (3.6, 3.25) -- (5, 5);

\pic at (4.5, 2.35) {nosign};

\node[font=\scriptsize] at (1.75, 2.95) {bridge};

\end{tikzpicture}%
\label{fig:planarity-p1}%
}
\hfill
\subfloat[]{%
\begin{tikzpicture}[scale=0.6,
    vertex/.style={circle, fill, inner sep=1.5pt},
    collision/.style={circle, fill=black, inner sep=1.8pt}]

\draw[gray, thick] (0, 0) -- (6.4, 0);
\draw[gray, thick] (0, 5) -- (6.4, 5);
\node[right] at (6.5, 0) {$\mathcal{X}$};
\node[right] at (6.5, 5) {$\mathcal{Y}$};

\node[vertex, colA] (a0) at (1, 0) {};   \node[below] at (a0) {$A$};
\node[vertex, colB] (b0) at (3, 0) {};   \node[below] at (b0) {$B$};
\node[vertex, colC] (c0) at (5, 0) {};   \node[below] at (c0) {$C$};

\node[vertex, colA] at (1, 5) {};
\node[vertex, colB] at (3, 5) {};
\node[vertex, colC] at (5, 5) {};

\node[collision] (v) at (3, 2.5) {};
\node[left=2pt] at (v) {$v$};

\node[collision] (w) at (3.95, 3.6) {};
\node[left=2pt] at (w) {$w$};

\draw[particleA] (1, 0) -- (v) -- (1, 5);

\draw[particleC] (5, 0) -- (4.45, 0.69);     
\draw[particleC] (3.95, 1.31) -- (v);        
\draw[particleC] (v) -- (w) -- (5, 5);        

\draw[particleB] (3, 0) -- (3.9, 0.8);       
\draw[white, line width=6pt]
  (3.9, 0.8) to[out=60, in=180] (4.45, 1.5) to[out=0, in=120] (5.05, 1.2);
\draw[particleB]
  (3.9, 0.8) to[out=60, in=180] (4.45, 1.5) to[out=0, in=120] (5.05, 1.2);
\draw[particleB]
  (5.05, 1.2) to[out=40, in=-85] (5.5, 2.5) to[out=95, in=-25] (w);
\draw[particleB] (w) -- (3, 5);              

\pic at (6.25, 1.45) {nosign};

\node[font=\scriptsize] at (4.5, 2.05) {bridge};

\end{tikzpicture}%
\label{fig:planarity-p2}%
}

\caption{\textbf{The two planarity conditions} (\Cref{def:planar}), each shown
as a \emph{forbidden} configuration; time runs upward, with sources in
$\mathcal{X}$ (bottom) and targets in $\mathcal{Y}$ (top).
\textbf{(a)~Crossing property~\ref{item:crossing}.} Paths $A$ and $B$ have
swapped endpoints---$A$ runs from the left source to the right target and $B$
vice versa---yet reach them \emph{without} intersecting, one bridging over the
other. Condition~\ref{item:crossing} forbids this: paths with swapped
endpoints must cross.
\textbf{(b)~Consecutive collision property~\ref{item:consecutive}.} Paths
$A$ and $C$ intersect at the shared vertex $v \in A \cap C$.
The intermediate path~$B$ meets $A\cup C$ only at the vertex marked~$w$,
which lies \emph{above}~$v$.
Condition~\ref{item:consecutive} forbids this: $B$ must share a vertex
with $A$ or~$C$ at or before~$v$.}
\label{fig:planarity}

\end{figure}

\subsection{The annihilation model}
\label{sec:setup-annihilation}

Fix $n$ source vertices
$x_1 \preceq x_2 \preceq \cdots \preceq x_n$ in~$\mathcal{X}$.
The target set $\mathcal{Y}$ collects the possible final
positions---for a walk observed for $T$ steps, the
vertices at time~$T$; all survivor positions and ghost
positions lie in~$\mathcal{Y}$. Each
source
vertex represents an initial particle; a \emph{path} from
$x_I$ to a target vertex~$y$ represents the particle's
trajectory, with weight $W(x_I \to y)$ equal to the path
generating function on the spacetime graph~$D$.

When two particles occupy the same vertex, they
\emph{annihilate}: both are destroyed and a \emph{twin pair}
of ghosts emerges from the collision point. The ghosts do not
remember which particles produced them; only the twin
pairing is preserved as the ghosts drift apart.
In particular, particles that share a starting position
annihilate instantly: the collision occurs at time zero
and only ghost paths emerge.

With $k$ pairwise annihilations---each producing one ghost
pair; a single collision vertex may host several
(\Cref{def:collision-diagram})---we have $s = n - 2k$
survivors and $2k$ ghosts (forming $k$ pairs), preserving
the total count of $n$ entities.

\subsection{Actors and roles}
\label{sec:setup-entities}

The theatrical names below anticipate the proof
(\Cref{sec:proof}), where a \emph{performance} records what
happened on stage and a \emph{casting} assigns actors to
roles.

\begin{definition}[Actors]\label{def:actors}
The \emph{actor set} $\Actors = \{1, \ldots, n\}$ indexes the initial
particles. Actor $I \in \Actors$ starts at position $x_I$.
\end{definition}

\begin{definition}[Roles]\label{def:roles}
The \emph{role set} (final entities) is:
\[
\Roles = \Survivors \cup \Ghosts,
\]
where:
\begin{itemize}
\item $\Survivors = \{1, \ldots, s\}$: survivor slots (with $s = n - 2k$);
\item $\Ghosts = \{1, \ldots, k\} \times \{1, 2\}$: ghost slots.
\end{itemize}
The cardinality is $s + 2k = n$, matching $|\Actors|$.
\end{definition}

\begin{definition}[Role order]\label{def:role-order}
We order the role set $\Roles$ linearly: the survivor slots
first, in their natural order $1 < \cdots < s$, followed by
the ghost slots in the lexicographic order
$(1,1) < (1,2) < (2,1) < \cdots < (k,2)$. Whenever we take
the sign $\sgn(\pi)$ of a bijection
$\pi\colon \Actors \to \Roles$, it is the sign of the
permutation obtained by reading~$\pi$ through this order on
the roles and the natural order on the actors.
\end{definition}

Each role $f \in \Roles$ has a final position $y_f$:
\begin{itemize}
\item Survivor slot $\ell \in \{1, \ldots, s\}$: final position $y_\ell$
  (with $y_1 \preceq \cdots \preceq y_s$, weakly increasing;
  see \Cref{rem:weak-survivors});
\item Ghost slot $(j, m) \in \{1, \ldots, k\} \times \{1, 2\}$:
  final position $y_{(j,m)}$.
\end{itemize}
We write $a_j = y_{(j,1)}$ and $b_j = y_{(j,2)}$ as shorthand for
the final positions of the two ghosts in pair~$j$.

\subsection{Collision diagrams and anchors}
\label{sec:setup-collision}

\begin{definition}[Collision diagram]\label{def:collision-diagram}
A \emph{collision diagram} is a graph embedded in the spacetime
graph $D$ with three types of vertices:
\begin{itemize}
\item \emph{Initial vertices} $x_1, \ldots, x_n$: each has
  in-degree~$0$ and out-degree~$1$, unless the vertex is
  also a collision vertex (see below).
\item \emph{Collision vertices}: each is reached by $m \geq 2$
  arriving particle paths (possibly of length zero, when
  initial particles share the vertex), has
  $d \in \{0, 1\}$ departing particle paths (possibly of
  length zero, when a surviving particle ends at the
  collision vertex), and even total count
  $m + d$ (so that all arriving particles
  can be paired). The collision produces $(m - d)/2$
  ghost pairs (and one continuing particle if $d = 1$);
\item \emph{Survivor vertices} $y_1, \ldots, y_s$: each has
  in-degree~$1$ and out-degree~$0$, unless the vertex is
  also a collision vertex (see below).
\end{itemize}
Each initial vertex has a directed path to either a collision
vertex or a survivor vertex. Distinct paths meet only at
collision vertices (no crossings between collisions).
The collision diagram records which collisions occur and where,
but not what happens after each collision; the ghost paths are
specified separately as part of the performance
(\Cref{def:performance}).
When two initial particles share a vertex, that vertex
serves simultaneously as initial vertex and collision
vertex; the arriving paths have length zero. Symmetrically,
when an annihilation occurs at a surviving particle's final
vertex, that vertex serves simultaneously as collision
vertex and survivor vertex; the departing path has length
zero.
\end{definition}

\begin{definition}[Anchors]\label{def:anchors}
The \emph{anchors} of a collision diagram are the pairs
$(v, i)$, where $v$ is a collision vertex producing
$(m-d)/2$ ghost pairs and $i \in \{1, \ldots, (m-d)/2\}$.
The ghost pairs born at~$v$ correspond bijectively to the
anchors of~$v$: each anchor carries exactly one ghost
pair, and a ghost pair is an anchor together with its
ordered pair of ghost paths (\Cref{def:performance}).
The anchors make the ghost pairs distinguishable: two
pairs born at the same vertex sit at different anchors,
even when their ghost paths coincide.
\end{definition}

\subsection{Performances}
\label{sec:setup-performances}

\begin{definition}[Performance]\label{def:performance}
An \emph{annihilation performance} $\perf$ specifies:
\begin{itemize}
\item A collision diagram: which particles annihilated and where;
\item Final positions for each survivor;
\item For each anchor (\Cref{def:anchors}): an ordered
  pair of ghost paths from its collision vertex---the
  ghost pair at that anchor;
\item A global numbering of the ghost pairs: a bijection
  from the anchors to $\{1, \ldots, k\}$. Ghost pair~$j$
  is the pair at the anchor numbered~$j$, with ghost paths
  $\Gamma_{j,1}, \Gamma_{j,2}$ to positions $a_j$
  and~$b_j$.
\end{itemize}
The \emph{weight} of a performance is
\[
w(\perf) =
  \prod_{\text{edges } e \text{ of diagram}} w(e)
  \cdot \prod_{j=1}^{k} w(\Gamma_{j,1})\, w(\Gamma_{j,2}),
\]
where $\Gamma_{j,1}$ is the ghost path to position
$a_j = y_{(j,1)}$ and $\Gamma_{j,2}$ is the ghost path to
position $b_j = y_{(j,2)}$.
\end{definition}

Performances are \emph{role-based}: they specify what
happened (collisions, final positions) without tracking
which initial particle ended where. 

The two bookkeeping
devices serve two purposes. The anchors make the ghost pairs
\emph{distinguishable} even if they are born at the same
vertex. The numbering gives the final state
\emph{coordinates}: the anchor set varies with the collision
diagram, and the numbering transports the anchor-indexed
data to the fixed index set~$\Roles$, so that the final
positions form a point $(y_f)_{f \in \Roles}$ of a product
space. This is what makes probability distributions of final
states tractable: they become measures on the product space,
as used for the continuous processes of
\Cref{sec:continuous}. Ghosts remain anonymous in the sense
of \Cref{sec:setup-annihilation}: neither the anchors nor
the numbering retain any memory of which particles created a
pair.

The numbering admits a dynamic description, implemented by
the companion code~\cite{Sniady2026companion}: sweep the
collisions in time order and insert each newborn pair at any
one of the positions of the current list of pairs ($i$
choices for the $i$-th pair); the final list order is the
numbering, and the $k!$ insertion histories of an evolution
realize its $k!$ numberings. In the probability regime the
insertion positions are uniform random choices, so that
$\det(M)/k!$---with $\det(M)$ as in
\Cref{thm:annihilation} below---is the probability of a
specific numbered final state.

\begin{example}[Two particles, one step]
\label{ex:performance-smallest} 
Two particles start at $x_1 = 0$ and $x_2 = 2$ and take
one $\pm 1$ step each, every edge of weight~$1$. There
are four evolutions of the two walkers. In one of them
both walkers step to the vertex~$1$ and annihilate: its
collision diagram consists of one collision vertex with
two arriving edges and $d = 0$; one ghost pair is born
there, and since the time horizon ends at the collision,
both ghost paths have length zero, $a_1 = b_1 = 1$. With
$k = 1$ the numbering and the insertion are trivial. The
weight of this performance is the product of the two
diagram edges, namely~$1$. The other three evolutions are
$k = 0$ performances---the collision diagram is just the
two particle paths---each of weight~$1$. In the count
regime these weights count performances; with step
weights~$\tfrac12$ each performance has weight
(probability)~$\tfrac14$. \Cref{ex:z-smallest} matches
these weights against the determinant.
\end{example}

\begin{remark}[Several ghost pairs at one vertex]
\label{rem:multipair-vertex}
When a collision vertex produces several ghost pairs, 
the anchors
matter exactly when two pairs from one vertex carry
identical ghost paths. For instance, for
$2k$ particles starting at a common vertex and observed at
time zero, all ghost paths have length zero and coincide,
and the $k!$ numberings of the $k$ anchors give $k!$
performances of weight~$1$, matching $\det(M) = k!$
(\Cref{rem:formula-edge-cases}).
\end{remark}

\Cref{fig:annihilation-performance} shows a larger example
with five particles and two collisions. The spacetime graph
there is the lattice $\ZZ^2$ with North and East steps: the
edges lead from $(u, v)$ to $(u, v+1)$ and to $(u+1, v)$,
so time flows toward the upper right. The source
set~$\mathcal{X}$ (the five starting vertices) and the
target set~$\mathcal{Y}$ (the final vertices) lie on two
staircase-shaped boundaries, each linearly ordered
by~$\prec$ from the northwest to the southeast; monotone
lattice paths between such boundaries satisfy the planarity
assumptions of \Cref{def:planar}, as for the examples of
\Cref{sec:setup-planarity}.

%
%

\begin{figure}[t]
\centering

\tikzset{
  style1/.style={colA, line width=2pt, solid,
    decorate, decoration={coil, segment length=3pt, amplitude=1pt}},
  style2/.style={colB, line width=1.5pt, solid,
    decorate, decoration={zigzag, segment length=4pt, amplitude=1pt}},
  style3/.style={colC, line width=1.5pt, solid,
    decorate, decoration={snake, segment length=5pt, amplitude=0.8pt}},
  style4/.style={colBC, line width=2pt, solid},
  style5/.style={colP, line width=2pt, solid, double, double distance=1pt},
  ghostPair1/.style={colGone, line width=1.5pt, dashed},
  ghostPair1double/.style={colGone, line width=1.5pt, dashed,
    double, double distance=2pt},
  ghostPair2/.style={black, line width=1pt, dotted},
  ghostPair2double/.style={black, line width=1pt, dotted,
    double, double distance=2pt},
  startnode/.style={circle, fill, inner sep=2.5pt},
  collisionnode/.style={circle, fill=black, inner sep=3pt},
  survivornode/.style={circle, fill=colP, inner sep=3pt},
  ghostend/.style={circle, draw, line width=1pt, fill=white, inner sep=2pt},
}

\begin{tikzpicture}[scale=0.20]
  \begin{scope}
    \clip (-1.5, 2.5) rectangle (34.5, 34.5);
    \draw[gray!30, thin, step=2] (-2, 2) grid (36, 36);
  \end{scope}

  \draw[ghostPair1] (8,18) -- (8,19) -- (8,20) -- (8,21) -- (8,22) -- (8,23) -- (8,24);
  \draw[ghostPair1, transform canvas={shift={(0,0.15)}}]
                    (8,24) -- (9,24) -- (10,24) -- (11,24) -- (12,24) -- (13,24) -- (14,24)
                    -- (15,24) -- (16,24) -- (17,24) -- (18,24) -- (19,24) -- (20,24)
                    -- (21,24) -- (22,24) -- (23,24) -- (24,24);
  \draw[ghostPair1double] (8,18) -- (9,18) -- (10,18) -- (11,18) -- (12,18) -- (13,18)
                          -- (14,18) -- (15,18) -- (16,18) -- (17,18) -- (18,18) -- (19,18)
                          -- (20,18) -- (21,18) -- (22,18) -- (23,18) -- (24,18) -- (25,18)
                          -- (26,18) -- (27,18) -- (28,18) -- (28,19) -- (28,20);

  \draw[ghostPair2] (14,24) -- (15,24) -- (16,24) -- (16,25) -- (16,26) -- (16,27)
                    -- (16,28) -- (16,29) -- (16,30) -- (16,31) -- (16,32);
  \draw[ghostPair2double] (14,24) -- (14,25) -- (14,26) -- (14,27) -- (14,28)
                          -- (15,28) -- (16,28) -- (17,28) -- (18,28) -- (19,28) -- (20,28);

  \draw[style1] (0,20) -- (0,21) -- (0,22) -- (0,23) -- (0,24)
                -- (1,24) -- (2,24) -- (3,24) -- (4,24) -- (5,24) -- (6,24) -- (7,24)
                -- (8,24) -- (9,24) -- (10,24) -- (11,24) -- (12,24) -- (13,24) -- (14,24);
  \draw[style2] (2,16) -- (2,17) -- (2,18)
                -- (3,18) -- (4,18) -- (5,18) -- (6,18) -- (7,18) -- (8,18);
  \draw[style3] (4,12) -- (5,12) -- (6,12) -- (7,12) -- (8,12)
                -- (8,13) -- (8,14) -- (8,15) -- (8,16) -- (8,17) -- (8,18);
  \draw[style4] (6,8) -- (7,8) -- (8,8) -- (9,8) -- (10,8) -- (11,8) -- (12,8) -- (13,8) -- (14,8)
                -- (14,9) -- (14,10) -- (14,11) -- (14,12) -- (14,13) -- (14,14) -- (14,15)
                -- (14,16) -- (14,17) -- (14,18) -- (14,19) -- (14,20) -- (14,21) -- (14,22)
                -- (14,23) -- (14,24);
  \draw[style5] (8,4) -- (9,4) -- (10,4) -- (11,4) -- (12,4) -- (13,4) -- (14,4) -- (15,4)
                -- (16,4) -- (17,4) -- (18,4) -- (19,4) -- (20,4) -- (21,4) -- (22,4) -- (23,4)
                -- (24,4) -- (25,4) -- (26,4) -- (27,4) -- (28,4) -- (29,4) -- (30,4) -- (31,4) -- (32,4)
                -- (32,5) -- (32,6) -- (32,7) -- (32,8) -- (32,9) -- (32,10) -- (32,11) -- (32,12);

  \node[left, fill=white] at (-0.2, 20) {$x_1$};
  \node[left, fill=white] at (1.8, 16) {$x_2$};
  \node[left, fill=white] at (3.8, 12) {$x_3$};
  \node[left, fill=white] at (5.8, 8) {$x_4$};
  \node[left, fill=white] at (7.8, 4) {$x_5$};

  \node[below left=0.1, fill=white] at (8, 18) {$c_1$};
  \node[below left=0.1, fill=white] at (14, 24) {$c_2$};

  \node[right=0.2, fill=white, inner sep=1pt] at (32, 12) {$s_1$};

  \node[right=0.2, fill=white, inner sep=1pt] at (24, 24) {$a_1$};
  \node[right=0.2, fill=white, inner sep=1pt] at (28, 20) {$b_1$};
  \node[above=0.2, fill=white, inner sep=1pt] at (16, 32) {$a_2$};
  \node[right=0.2, fill=white, inner sep=1pt] at (20, 28) {$b_2$};

  \node[startnode, colA] at (0, 20) {};
  \node[startnode, colB] at (2, 16) {};
  \node[startnode, colC] at (4, 12) {};
  \node[startnode, colBC] at (6, 8) {};
  \node[startnode, colP] at (8, 4) {};

  \node[collisionnode] at (8, 18) {};
  \node[collisionnode] at (14, 24) {};

  \node[survivornode] at (32, 12) {};
  \node[ghostend, draw=colGone] at (24, 24) {};
  \node[ghostend, draw=colGone] at (28, 20) {};
  \node[ghostend, draw=black] at (16, 32) {};
  \node[ghostend, draw=black] at (20, 28) {};

  \begin{scope}[shift={(36, 10)}]
    \node[anchor=west, font=\small\bfseries] at (0, 22) {Particles};
    \draw[style1] (0, 20) -- (2, 20); \node[anchor=west] at (2.5, 20) {\small $1$};
    \draw[style2] (0, 18) -- (2, 18); \node[anchor=west] at (2.5, 18) {\small $2$};
    \draw[style3] (0, 16) -- (2, 16); \node[anchor=west] at (2.5, 16) {\small $3$};
    \draw[style4] (0, 14) -- (2, 14); \node[anchor=west] at (2.5, 14) {\small $4$};
    \draw[style5] (0, 12) -- (2, 12); \node[anchor=west] at (2.5, 12) {\small $5$ (survivor)};

    \node[anchor=west, font=\small\bfseries] at (0, 9) {Ghost pairs};
    \draw[ghostPair1] (0, 7) -- (2, 7);
    \draw[ghostPair1double] (0, 5) -- (2, 5);
    \node[anchor=west] at (2.5, 6) {\small pair 1};
    \draw[ghostPair2] (0, 2) -- (2, 2);
    \draw[ghostPair2double] (0, 0) -- (2, 0);
    \node[anchor=west] at (2.5, 1) {\small pair 2};
  \end{scope}

\end{tikzpicture}

\caption{An annihilation performance on the lattice $\ZZ^2$ with North/East
steps; time flows toward the upper right, and both the sources and
the targets are ordered by~$\prec$ from the northwest to the
southeast. Five particles start at $x_1, \ldots, x_5$. Particles~$2$ and~$3$
meet at $c_1$; both are destroyed and ghost pair~$1$ emerges (dashed paths).
Particles~$1$ and~$4$ meet at $c_2$; both are destroyed and ghost pair~$2$
emerges (dotted paths). Particle~$5$ survives, reaching~$s_1$. Within each
ghost pair, one ghost follows a single line, the other a double line.
Ghost pairs are distinguished by line pattern (dashed vs.\
dotted)---they carry no memory of which particles were
destroyed. In this example both pairs happen to end in the
order $a_j \preceq b_j$, so both ghost signs
(\Cref{def:ghost-sign}) are $\varepsilon_j = +1$; the
reversed order $b_j \prec a_j$ is equally possible and is
recorded by $\varepsilon_j = -1$.}
\label{fig:annihilation-performance}

\end{figure}

\subsection{Final state}
\label{sec:setup-final-state}

\begin{definition}[Ghost sign]\label{def:ghost-sign}
For ghost pair $j$, the \emph{ghost sign} is:
\[
\varepsilon_j = \begin{cases}
+1 & \text{if } a_j \preceq b_j, \\
-1 & \text{if } b_j \prec a_j.
\end{cases}
\]
When $a_j = b_j$ (ghosts at the same final position), we have
$\varepsilon_j = +1$ by convention; the choice is immaterial
(\Cref{rem:formula-edge-cases}). For continuous state spaces,
this case has probability zero.
\end{definition}

\begin{definition}[Final state]\label{def:final-state}
The \emph{final state} $\FinalState$ specifies:
\begin{itemize}
\item The number $k$ of ghost pairs, equivalently of
  pairwise annihilations (with $2k \leq n$);
\item Final positions $y_f$ for each $f \in \Roles$.
\end{itemize}
The \emph{sign} of the final state is the product of ghost signs:
$\sgn \FinalState = \prod_{j=1}^{k} \varepsilon_j$.
\end{definition}

\section{The annihilation formula}\label{sec:formula}

The matrix~$M$ defined below has one row per actor and one
column per role. Its Leibniz expansion sums over all
bijections $\pi\colon \Actors \to \Roles$, but only those
consistent with the ghost configuration should contribute.
We introduce formal variables that track the relative
ordering of the two actors assigned to each ghost pair;
their products vanish on the inconsistent terms, so the
determinant itself performs the selection.

\subsection{Formal variables}
\label{sec:formula-variables}

For each ghost pair $j \in \{1, \ldots, k\}$, introduce
formal variables $\tau^{(j)+}_I$, $\tau^{(j)-}_I$ for each
particle $I$. Their products evaluate to scalars, by a rule
that depends on the ghost sign~$\varepsilon_j$
(\Cref{def:ghost-sign}); for $I \neq J$:
\begin{equation}\label{eq:formal}
\tau^{(j)+}_I \, \tau^{(j)-}_J = \begin{cases}
-1 & \text{if } \varepsilon_j = +1 \text{ and } I > J, \\
+1 & \text{if } \varepsilon_j = -1 \text{ and } I < J, \\
\phantom{+}0 & \text{otherwise}.
\end{cases}
\end{equation}
(The case $I = J$ does not arise, since bijections assign
distinct actors to distinct slots.)

Concretely, the product rule~\eqref{eq:formal} keeps a term
exactly when the two actors assigned to ghost pair~$j$ are
ordered consistently with the ghost sign---the higher-indexed
actor in slot~$(j,1)$ when $\varepsilon_j = +1$, the
lower-indexed one when $\varepsilon_j = -1$---and the
surviving term carries the sign $-\varepsilon_j$.

All formal variables commute with each other and with the
path weights. The relations~\eqref{eq:formal} suffice to
evaluate the determinant: every monomial of the Leibniz
expansion contains, for each ghost pair~$j$, exactly one
factor $\tau^{(j)+}_I$ (from column $(j,1)$) and one factor
$\tau^{(j)-}_J$ (from column $(j,2)$) with $I \neq J$, so
each monomial evaluates uniquely to a signed product of path
weights, possibly zero.

\subsection{The matrix}
\label{sec:formula-matrix}

Define the $n \times n$ matrix $M$ with:
\begin{itemize}
\item Rows indexed by particles $I \in \Actors$;
\item Columns indexed by roles $f \in \Roles$, in the role
  order (\Cref{def:role-order}): survivor slots
  $1, \ldots, s$ followed by ghost slots
  $(1,1), (1,2), \ldots, (k,1), (k,2)$.
\end{itemize}
Entries:
\[
M_{I,f} = \begin{cases}
\phantom{\tau^{(j)+}_I \,} W(x_I \to y_\ell) & \text{survivor column } \ell, \\[0.3em]
\tau^{(j)+}_I \, W(x_I \to a_j) & \text{column for ghost slot } (j,1), \\[0.3em]
\tau^{(j)-}_I \, W(x_I \to b_j) & \text{column for ghost slot } (j,2).
\end{cases}
\]

\subsection{The theorem}
\label{sec:formula-theorem}

\begin{theorem}[Annihilation formula with ghosts]\label{thm:annihilation}
For a fixed final state $\FinalState$, the total weight of
performances is:
\[
Z = \det(M).
\]
When $k = 0$ (no collisions), there are no formal variables
and the formula is the Karlin--McGregor determinant.
\end{theorem}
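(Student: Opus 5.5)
Expanding $\det(M)$ by Leibniz and evaluating the formal variables by \eqref{eq:formal}, we get
\[
\det(M)=\sum_{\pi}\sgn(\pi)\prod_{j=1}^{k}\bigl(-\varepsilon_j\bigr)\prod_{f\in\Roles}W\bigl(x_{\pi^{-1}(f)}\to y_f\bigr).
\]
Here $\pi$ ranges over the bijections $\Actors\to\Roles$ that, for every $j$, put the two actors assigned to pair~$j$ into slots $(j,1),(j,2)$ in the order dictated by $\varepsilon_j$. Call these \emph{admissible castings}. Expanding each $W$ as a sum over paths turns the right-hand side into a signed sum over pairs $(\pi,\paths)$, where $\paths$ is a family of $n$ independent paths with actor $\pi^{-1}(f)$ walking from $x_{\pi^{-1}(f)}$ to $y_f$. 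The weight of such a pair is $\sgn(\pi)\prod_j(-\varepsilon_j)\,w(\paths)$. The theorem then follows from two steps: a weight-preserving bijection between performances and a distinguished class of \emph{good} pairs, each counted with sign $+1$, and a sign-reversing, weight-preserving involution on the remaining \emph{bad} pairs.

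\textbf{Good pairs and the bijection with performances.} To turn a pair $(\pi,\paths)$ into a dynamical history, run the actors forward in the fixed linear extension of time with an \emph{active set}, initially all of $\Actors$. When two active actors first share a vertex $v$, they should annihilate. This is allowed only if these two actors are exactly the pair $\pi$ assigns to some ghost pair $j$, and the anchor/numbering bookkeeping is consistent. After annihilation their continuing paths are read as the ghost paths $\Gamma_{j,1},\Gamma_{j,2}$, and both actors leave the active set. A pair is \emph{good} if every meeting of active actors is such a prescribed annihilation, and active actors never otherwise share a vertex.

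Given a performance, we recover $\pi$ as follows. Survivors go to survivor slots in order; by \ref{item:crossing} survivor paths cannot cross, so this is the identity order. The two actors annihilated at the anchor numbered $j$ go to slots $(j,1),(j,2)$, in the unique order allowed by $\varepsilon_j$. Concatenating diagram edges with ghost paths gives $\paths$. This is a bijection onto good pairs, and it preserves weight.

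The sign needs a check. By \ref{item:consecutive} (via \Cref{prop:adjacent-crossings}), annihilating actors are adjacent in the active set. One then verifies inductively that $\sgn(\pi)\prod_j(-\varepsilon_j)=+1$. The mechanism is this: removing an adjacent pair $I<J$ contributes a transposition parity that is exactly cancelled by the factor $-\varepsilon_j$, because the higher actor $J$ sits in slot $(j,1)$ when $\varepsilon_j=+1$. The convention $\varepsilon_j=+1$ when $a_j=b_j$ is harmless, since swapping two identical columns only matters for coinciding endpoints. Survivor positions may coincide only at collision vertices (\Cref{rem:weak-survivors}); we handle this with the same convention.

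\textbf{The involution on bad pairs.} For a bad pair, take the first (in the linear order) vertex $v$ where two active actors meet \emph{illegitimately}. This happens either because they are not a prescribed pair, or because they are prescribed to different slots but meet before their partners. Among all actors at $v$, choose the canonical pair, say the two adjacent active actors with smallest indices. Swap their path tails after $v$, and compose $\pi$ with the corresponding transposition. Segment swap preserves weight, and $\sgn(\pi)$ changes. The factors $\varepsilon_j$ are unchanged, because the roles, and hence the endpoints, stay fixed. One must also check that the new casting is again admissible, so that \eqref{eq:formal} still gives a nonzero evaluation. The awkward case is when the swap exchanges the two actors of the same pair $j$; the rule then flips $-\varepsilon_j$ relative to the reordering, and this must be handled by excluding legitimate annihilations from the definition of "bad", as above.

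The history before $v$ is untouched, and the active set and meetings at $v$ are the same. So the swapped pair has the same first illegitimate vertex and the same canonical pair, and the map is an involution. Planarity, through \ref{item:consecutive}, guarantees that the canonical actors are adjacent, so the swap does not create an earlier illegitimate crossing.

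\textbf{Main obstacle.} I expect the hardest part to be checking that the involution is well defined at vertices where several actors meet or several anchors are born. It must commute with the anchor numbering, and it must preserve admissibility with respect to \eqref{eq:formal}. I would first settle the simple-collision case, with $m=2$ and one anchor, and then reduce multi-particle vertices to it by choosing a canonical ordering of the adjacent pairs at $v$.
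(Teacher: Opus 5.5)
Your architecture is the paper's. The formal variables cut the Leibniz expansion down to candidate castings, an active-set scan (the paper's rehearsal) classifies them, far-side gluing gives the bijection with performances, and segment swap at the first illegitimate meeting cancels the rest. Your inductive sign check is a valid repackaging of the paper's inversion count: the earliest pair is adjacent among the still-present actors, so its cross-inversions with every other actor come in pairs. Three steps, however, are missing or fail as written.

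(1) \emph{Surjectivity onto performances.} A casting with no illegitimate meeting is a fixed point of your involution. It is a performance with final state $\FinalState$ only if every ghost pair is actually filled. A casting in which the two actors of some pair never meet would leave an unmatched term. You need the paper's stall argument:
\begin{itemize}
\item admissibility sends the higher-indexed actor of each pair to the weakly leftward target, so the two targets are in weakly reversed order;
\item \ref{item:crossing} then forces the two paths to meet;
\item partners are deactivated only together, so both are still active at that vertex, and the meeting is their annihilation.
\end{itemize}
This is \Cref{lem:no-ghosts-order,lem:crossings-exist,prop:rehearsal-boundary}.

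(2) \emph{Admissibility after the swap.} Excluding legitimate meetings does not settle it. Suppose the partner $K$ of $I$ (or of $J$) satisfied $I<K<J$. Then the swap would put $J$ into $I$'s slot with its order relative to $K$ reversed, and \eqref{eq:formal} would kill the term. This is ruled out because the swapped actors are adjacent in the \emph{active} set and $K$ is itself active (pair-completeness), so $K\notin[I,J]$; see \Cref{lem:key-lemma}. That is where adjacency is actually needed. That the swap creates no earlier crossing holds for a simpler reason: only suffixes beyond $v$ change.

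(3) \emph{Multi-actor vertices.} Read literally, your definitions here are wrong, not merely unfinished.
\begin{itemize}
\item ``Active actors never otherwise share a vertex'' classifies as bad the genuine performances in which an odd particle passes through a collision vertex ($d=1$), or in which two ghost pairs are born together.
\item ``The two adjacent active actors with smallest indices'' can be a legitimate pair. For example, take four actors at $v$ whose lower two are partners and upper two are not; swapping the lower two destroys admissibility.
\end{itemize}
The paper fixes one convention at each vertex: sort the actors there, pair them consecutively, let an odd last one continue, and match the pairs to the anchors in order. It uses this convention in both attribution and rehearsal, and swaps the first spurious consecutive pair. The anchor order is also what keeps the bijection exact when two pairs born at one vertex carry identical ghost paths.
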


\noindent
The proof occupies \Cref{sec:proof}.

\begin{remark}[Edge cases of the boundary conditions]
\label{rem:formula-edge-cases}
\label{rem:weak-survivors}
Two boundary configurations refine the reading of
\Cref{thm:annihilation}.

\emph{Weakly increasing survivor positions:} the ordering
assumption on the survivor positions (\Cref{def:roles}) is
weak, so the theorem covers two survivors sharing a final
position. Such particles would occupy the same vertex and
annihilate rather than survive, so no performance attains
this final state and $Z = 0$; on the matrix side, the two
survivor columns coincide, so $\det(M) = 0$ as well.
(Survivor positions may freely coincide with ghost
positions: ghosts share vertices with any path.)

\emph{Coincident ghosts:} when the two ghosts of pair~$j$
share their final position ($a_j = b_j$),
\Cref{def:ghost-sign} sets $\varepsilon_j = +1$ by
convention, and $\det(M)$ does not depend on the choice.
Indeed, the two ghost columns of pair~$j$ then carry equal
weights $W(x_I \to a_j)$, and switching~$\varepsilon_j$
exchanges which of the two slot assignments of an actor pair
survives the evaluation rule~\eqref{eq:formal}---changing
the permutation sign by a transposition---while
simultaneously flipping the formal
factor~$-\varepsilon_j$; the two sign changes cancel.
The two mechanisms cooperate in the fully degenerate case
of $n$ particles starting at a common vertex and observed
at time zero: the formula assigns weight~$k!$ (one unit
per numbering; see \Cref{rem:multipair-vertex}) to the
forced outcome---the maximal number of annihilations,
every final position at the starting vertex---and
weight~$0$ to every other final state, whose identical
survivor columns make the determinant vanish.
\end{remark}

\subsection{Examples}
\label{sec:formula-examples}

\begin{example}[What $Z$ counts]
\label{ex:z-smallest}
Continue \Cref{ex:performance-smallest}: two particles at
$0$ and~$2$, one $\pm 1$ step, all edge weights~$1$. For
the three final states with two survivors, at $(-1, 1)$,
$(-1, 3)$, and $(1, 3)$, the matrix~$M$ is the
Karlin--McGregor matrix, and
\[
Z = \det\begin{pmatrix} 1 & 1 \\ 0 & 1 \end{pmatrix}
  = \det\begin{pmatrix} 1 & 0 \\ 0 & 1 \end{pmatrix}
  = \det\begin{pmatrix} 1 & 0 \\ 1 & 1 \end{pmatrix}
  = 1,
\]
one performance each. The state $(1, 1)$ for $k=0$---both
walkers surviving at the same vertex---has identical
survivor columns and $Z = 0$
(\Cref{rem:formula-edge-cases}): the walkers meeting
at~$1$ annihilate instead. Their annihilation is the
$k = 1$ final state with ghost pair positions
$a_1 = b_1 = 1$ and
\begin{align*}
M &= \begin{pmatrix}
\tau^{(1)+}_1 & \tau^{(1)-}_1 \\
\tau^{(1)+}_2 & \tau^{(1)-}_2
\end{pmatrix},
\\
Z &= \det(M)
  = \tau^{(1)+}_1 \tau^{(1)-}_2
  - \tau^{(1)+}_2 \tau^{(1)-}_1
  = 0 - (-1) = 1:
\end{align*}
the tie gives $\varepsilon_1 = +1$, so by the evaluation
rule~\eqref{eq:formal} the first product ($I = 1 < J = 2$)
vanishes, while the second ($I = 2 > J = 1$) evaluates
to~$-1$---the surviving term puts actor~$2$ in
slot~$(1,1)$. The four values~$1$ account for the four
evolutions; with step weights~$\tfrac12$ the total mass
is $4 \cdot \tfrac14 = 1$.
\end{example}

\begin{example}[Multi-annihilation: the funnel]
\label{ex:funnel}
Spacetime graphs are not restricted to the lattice.
Consider the \emph{funnel} (\Cref{fig:funnel}): four
sources $x_1 \prec x_2 \prec x_3 \prec x_4$, each with a
single outgoing edge of weight~$1$, all four leading to a
common vertex~$v$; two edges leave~$v$, toward targets
$y_- \prec y_+$, with weights $\alpha$ and~$\beta$. Planarity
holds trivially: every pair of paths shares the
vertex~$v$. All four particles
annihilate at~$v$: the collision vertex has $m = 4$
arriving paths and $d = 0$, so two ghost pairs are born
together, attached to the two anchors of~$v$
(\Cref{def:anchors}), with
$s = 0$ and $k = 2$; each of the four ghosts then takes
one of the two outgoing edges.

\begin{figure}[t]
\centering
\begin{tikzpicture}[scale=1.0,
    vertex/.style={circle, fill, inner sep=1.5pt},
    collision/.style={circle, fill=black, inner sep=2pt},
    ghostend/.style={circle, draw, line width=1pt,
      fill=white, inner sep=2pt},
    ghostPair1/.style={colGone, line width=1.5pt, dashed},
    ghostPair1double/.style={colGone, line width=1.5pt, dashed,
      double, double distance=2pt},
    ghostPair2/.style={black, line width=1pt, dotted},
    ghostPair2double/.style={black, line width=1pt, dotted,
      double, double distance=2pt}]
  \node[vertex, colA] (x1) at (0, 0) {};
  \node[below] at (x1) {$x_1$};
  \node[vertex, colB] (x2) at (2, 0) {};
  \node[below] at (x2) {$x_2$};
  \node[vertex, colC] (x3) at (4, 0) {};
  \node[below] at (x3) {$x_3$};
  \node[vertex, colD] (x4) at (6, 0) {};
  \node[below] at (x4) {$x_4$};
  \node[collision] (v) at (3, 1.5) {};
  \node[below=4pt] at (v) {$v$};
  \node[ghostend] (a) at (2, 3) {};
  \node[above] at (a) {$y_-$};
  \node[ghostend] (b) at (4, 3) {};
  \node[above] at (b) {$y_+$};
  \draw[particleA] (x1) -- (v);
  \draw[particleB] (x2) -- (v);
  \draw[particleC] (x3) -- (v);
  \draw[particleD] (x4) -- (v);
  \draw[ghostPair1] (v) to[bend left=18] (a);
  \draw[ghostPair1double] (v) to[bend left=18] (b);
  \draw[ghostPair2] (v) to[bend right=18] (a);
  \draw[ghostPair2double] (v) to[bend right=18] (b);
  \node[font=\scriptsize] at (2.05, 2.15) {$\alpha$};
  \node[font=\scriptsize] at (3.95, 2.15) {$\beta$};
  \begin{scope}[shift={(7.0, 0.7)}]
    \draw[ghostPair1] (0, 1.8) -- (0.9, 1.8);
    \draw[ghostPair1double] (0, 1.4) -- (0.9, 1.4);
    \node[anchor=west, font=\scriptsize] at (1.0, 1.6)
      {pair at anchor $(v, 1)$};
    \draw[ghostPair2] (0, 0.6) -- (0.9, 0.6);
    \draw[ghostPair2double] (0, 0.2) -- (0.9, 0.2);
    \node[anchor=west, font=\scriptsize] at (1.0, 0.4)
      {pair at anchor $(v, 2)$};
  \end{scope}
\end{tikzpicture}
\caption{The funnel of \Cref{ex:funnel}, drawn in the
visual style of the lattice figures; time runs upward.
Each source has a single outgoing edge, and all four lead
to~$v$, so every pair of paths shares~$v$ and planarity
holds trivially. All
four particles annihilate at~$v$ ($m = 4$, $d = 0$), and two
ghost pairs are born: the pair at anchor $(v, 1)$ is drawn
dashed, the pair at anchor $(v, 2)$ dotted. Within each pair,
the ghost path to~$y_-$ (the first slot) is a single line and
the ghost path to~$y_+$ (the second slot) a double line, as in
\Cref{fig:annihilation-performance}. The final state shown is
the identical-pair state: both pairs reach the positions
$(y_-, y_+)$, and the two performances above it differ only by
which anchor carries the number~$1$.}
\label{fig:funnel}
\end{figure}

A final state assigns positions
$(a_1, b_1), (a_2, b_2) \in \{y_-, y_+\}^2$ to the two
numbered pairs---sixteen final states in all. Every one
of them has
\[
Z = 2!\,\alpha^{\#_-}\,\beta^{\#_+},
\]
where $\#_-$ and $\#_+$ count the slots at~$y_-$ and
at~$y_+$. The factor~$2!$ counts the two numberings of the
two anchors of~$v$, so each physical outcome is reached by
two performances---also when the two pairs follow
identical ghost paths, since the numberings differ on the
anchors (\Cref{rem:multipair-vertex}). On the
determinant side, for each of the sixteen final states
exactly six terms of the Leibniz expansion survive the
evaluation rule~\eqref{eq:formal}---the three ways to
pair the four actors, times the two numberings of the
pairs---and each carries the same weight
$\alpha^{\#_-}\beta^{\#_+}$. Grouped by the pairing, the
signs contribute $+2$, $-2$, $+2$, giving
$\det(M) = 2\,\alpha^{\#_-}\beta^{\#_+}$; this grouping
by pairings, run uniformly over all states, is exactly
how the Pfaffian formula of \Cref{sec:pfaffian} will
emerge.

In the count regime ($\alpha = \beta = 1$) every final
state has $Z = 2$, and the sixteen final states carry
$32 = 2! \cdot 16$ performances in total. In the
probability regime ($\alpha = p$,
$\beta = q = 1 - p$), each numbered final state has
probability $Z/2!$, and the sixteen values sum to
$(p + q)^4 = 1$. 

For an odd number of particles the funnel additionally
produces a survivor standing at the collision vertex
($d = 1$, with a departing path of length zero;
\Cref{def:collision-diagram}). The companion code
verifies both funnel instances
exactly~\cite{Sniady2026companion}.
\end{example}

\section{Proof of the annihilation formula}\label{sec:proof}

This proof parallels the coalescence proof
in~\cite{SU2026coalescence}: identical four-part structure
(castings, attribution, rehearsal, sign-reversing involution)
and identical involution mechanism (segment swap at the first
wrong crossing). The annihilation proof is simpler because
annihilation destroys both particles at a collision---no heir
continues---reducing the case analysis in the involution.
Nevertheless, ghost pairs differ enough from single ghosts that
the success criterion, attribution map, and involution case
analysis all require reworking. The present proof is
self-contained; familiarity with the companion paper is not
required.

\subsection{Performances vs.\ castings}
\label{sec:proof-strategy}

Throughout the proof, we fix a final state~$\FinalState$
(\Cref{sec:setup-final-state}): the number~$k$ of collisions,
survivor positions $y_1, \ldots, y_s$, and ghost pair positions
$(a_j, b_j)$ for $j = 1, \ldots, k$. The generating function
$Z = Z_{\FinalState}$ counts \emph{performances}
for~$\FinalState$: complete descriptions of what collisions occur
and where the final entities end up.

\subsubsection{Performances}

A performance is \textbf{role-based}---it specifies outcomes
without tracking which particle ``becomes'' which final entity.
In probabilistic terms, each performance is an elementary event
in the sample space.

Consider \Cref{fig:intro-annihilation}: four particles start at
$x_1 < x_2 < x_3 < x_4$, particles~$2$ and~$3$ collide at~$c$,
and the system ends with two survivors at $y_1 < y_2$ and a ghost
pair at $(a, b)$. The role-based description records:
\begin{itemize}
\item A collision occurred at spacetime point~$c$.
\item The ghost pair emerged from that collision, reaching
  positions $a$ and~$b$.
\item Two particles survived, reaching $y_1$ and $y_2$.
\item The paths taken by each entity.
\end{itemize}
This description says nothing about \emph{which} of particles~$2$
and~$3$ ended at which ghost position---only that the collision
produced a ghost pair. In theater terms: the script specifies
where each character should stand at the final scene, but does
not specify which actor plays which character.

\subsubsection{The determinant perspective}

The Leibniz expansion of 
the determinant $\det(M)$ gives something different: a signed sum
over \emph{castings}---assignments of particles to final positions
via non-interacting paths. A casting is \textbf{actor-based}: it
tracks each particle's complete trajectory.
\Cref{fig:successful-casting} shows a successful casting where
line styles persist through the collision, revealing who went
where.

The proof establishes a bijection between performances and a
subset of castings (the ``successful'' ones), while the remaining
``failed'' castings cancel in pairs (\Cref{fig:failed-casting}).

\subfile{../figures/fig-castings}

\subsubsection{Outline of the argument}

The argument has four parts:
\begin{enumerate}
\item \textbf{Castings from the determinant.} The Leibniz expansion
  produces castings; the evaluation of the formal variables
  annihilates all but the \emph{candidate}
  castings, those consistent with the ghost configuration.
\item \textbf{Attribution.} Every performance determines a casting
  by tracking which particle ends where. Attribution always
  produces a candidate casting.
\item \textbf{Rehearsal.} We attempt to interpret each casting as a
  performance by scanning crossings in temporal order. This may
  succeed (``successful casting'') or fail (``failed casting'').
\item \textbf{The sign-reversing involution.} Failed castings pair
  up via segment swap and cancel. Only successful castings remain.
\end{enumerate}
The planarity assumptions enter only through two standalone
propositions (\Cref{sec:proof-planarity}).

\subsection{Castings}
\label{sec:proof-castings}

The Leibniz formula expands the determinant as a sum over bijections
$\pi\colon \Actors \to \Roles$:
\[
\det(M) = \sum_{\pi} \sgn(\pi) \prod_{I \in \Actors} M_{I, \pi(I)},
\]
where $\sgn(\pi)$ is taken with respect to the role order on
the columns (\Cref{def:role-order}).
Each matrix entry $M_{I,f}$ is either a path generating function
$W(x_I \to y_f)$ (for survivor columns) or
$\tau^{(j)\pm}_I W(x_I \to y_{(j,m)})$ (for ghost columns).

\begin{definition}[Casting]\label{def:casting}
A \emph{casting} $(\pi, \paths)$ consists of:
\begin{itemize}
\item A bijection $\pi\colon \Actors \to \Roles$;
\item A path family $\paths = \{P_I\}_{I \in \Actors}$ where $P_I$ goes
  from $x_I$ to $y_{\pi(I)}$.
\end{itemize}
The \emph{weight} of a casting is
\[w(\casting) = \prod_{I \in \Actors} w(P_I).\]
\end{definition}

Crucially, castings are pure geometry: the paths are non-interacting.
They may cross freely---a crossing is simply a shared vertex with no
physical consequence.

\begin{definition}[Candidate bijection]\label{def:candidate}
A bijection $\pi\colon \Actors \to \Roles$ is a \emph{candidate}
for $\FinalState$ if, for each ghost pair $j \in \{1, \ldots, k\}$:
\begin{itemize}
\item when $\varepsilon_j = +1$:
  $\pi^{-1}(j,1) > \pi^{-1}(j,2)$;
\item when $\varepsilon_j = -1$:
  $\pi^{-1}(j,1) < \pi^{-1}(j,2)$.
\end{itemize}
In both cases, the higher-indexed particle maps to the
leftward ghost position.
\end{definition}

Candidacy arises from the evaluation of the formal variables.
Each ghost
pair~$j$ contributes two columns to $\det(M)$: column $(j,1)$
with entries $\tau^{(j)+}_I$ and column $(j,2)$ with entries
$\tau^{(j)-}_I$. When a bijection~$\pi$ sends particles $I$ to
$(j,1)$ and $J$ to $(j,2)$, the product
rule~\eqref{eq:formal} evaluates
$\tau^{(j)+}_I \tau^{(j)-}_J$ to $-\varepsilon_j$ if the pair
$(I, J)$ is ordered consistently with the ghost
sign~$\varepsilon_j$, and to zero otherwise: exactly the
bijections satisfying the candidacy condition survive.

Write $\Candidates$ for the set of candidate bijections;
a casting is a \emph{candidate casting} if its bijection
is a candidate. After
evaluating the formal variables, only candidates survive. Each
candidate
$\pi$ contributes a sign from the formal variable product
(determined by the ordering of actors within each ghost pair)
in addition to $\sgn(\pi)$:
\[
\det(M)
= \sum_{\pi \in \Candidates} (\text{formal sign}) \cdot \sgn(\pi)
  \prod_{I} W(x_I \to y_{\pi(I)}).
\]
The sign analysis in \Cref{sec:proof-completion} shows that these
signs combine to give $+1$ for each successful casting.

\begin{remark}[No tournament needed]\label{rem:no-tournament}
In the companion paper the physical sign of a casting cannot
be produced by any linear order of the roles; it is defined
there through a tournament---a total but non-transitive
relation~\cite{SU2026coalescence}. The annihilation formula
avoids this: the ghost signs are absorbed by the formal
variables (\Cref{sec:formula-variables}), and the plain
permutation sign with respect to the role order suffices.
\end{remark}

\subsection{Two consequences of planarity}
\label{sec:proof-planarity}

The planarity assumptions (\Cref{def:planar}) enter the proof
through exactly two statements about the paths of a casting,
recorded here once; everything downstream---the stall analysis,
the involution, the sign identity---cites these two
propositions and never invokes the planarity assumptions
directly. This separation of the geometry from the
combinatorics follows the companion
paper~\cite{SU2026coalescence}. Recall that a \emph{crossing} of
two paths is simply a shared vertex
(\Cref{sec:proof-castings}).

\begin{proposition}[Order preservation]
\label{prop:order-preservation}
Let $A \subseteq \Actors$ be a set of actors whose casting
paths are pairwise non-crossing. Then the endpoints are in the
strict spatial order of the sources: $I < J$ in~$A$ implies
$y_{\pi(I)} \prec y_{\pi(J)}$.
\end{proposition}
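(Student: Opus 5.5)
The plan is to reduce the statement to a pairwise claim and then read that claim off the crossing property~\ref{item:crossing}. Fix $I < J$ in~$A$ and abbreviate $y = y_{\pi(I)}$, $y' = y_{\pi(J)}$. The paths $P_I$ and $P_J$ share no vertex, so I want to show $y \prec y'$. Since $\prec$ is a linear order on~$\mathcal{Y}$, there are only two ways this could fail: $y = y'$, or $y' \prec y$.

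For the equality case I will not need planarity at all. If $y = y'$, then $P_I$ and $P_J$ both end at the vertex~$y$. They therefore share a vertex, which means they cross, and this contradicts the hypothesis on~$A$.

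For the swapped case, suppose $y' \prec y$. The first task is to get strict order of the sources. We know $x_I \preceq x_J$. If $x_I = x_J$, the two paths share their starting vertex, which is again a crossing. So $x_I \prec x_J$ strictly. Now the source pair $x_I \prec x_J$ and the target pair $y' \prec y$ are exactly the swapped configuration of the crossing property~\ref{item:crossing}, taking $x = x_I$, $x' = x_J$ and targets $y$, $y'$. That property says every path from $x_I$ to~$y$ meets every path from $x_J$ to~$y'$. In particular $P_I$ meets~$P_J$, contradicting non-crossing. This leaves only $y \prec y'$, as claimed.

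The only point needing care is bookkeeping rather than any real obstacle. The sources are only weakly ordered, and coincident endpoints cannot be handled by~\ref{item:crossing}, since that property is stated for strictly ordered pairs. Both degenerate cases are disposed of by the observation that a shared endpoint is itself a crossing. The consecutive collision property~\ref{item:consecutive} is not used here.
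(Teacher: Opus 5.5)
Your proof is correct and follows the paper's argument step for step. A shared source or a shared endpoint is already a crossing, which gives $x_I \prec x_J$ and distinct endpoints, and the crossing property~\ref{item:crossing} then rules out the reversed order $y_{\pi(J)} \prec y_{\pi(I)}$.
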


\begin{proof}
Fix $I < J$ in~$A$, so that $x_I \preceq x_J$. Since $P_I$
and~$P_J$ share no vertex, their sources are distinct
($x_I \prec x_J$), and so are their endpoints. If the endpoints
were in reversed order, $y_{\pi(J)} \prec y_{\pi(I)}$, the
crossing property~\ref{item:crossing} would force $P_I$
and~$P_J$ to intersect. Hence $y_{\pi(I)} \prec y_{\pi(J)}$.
\end{proof}

\begin{proposition}[Crossing blocks are contiguous]
\label{prop:adjacent-crossings}
Let $A \subseteq \Actors$ be a set of actors, 
and let~$v$ be the first crossing among the paths
of~$A$ in the linear order (\Cref{def:spacetime-graph}). Then
the actors of~$A$ whose paths pass through~$v$ form a
contiguous block of~$A$: if $I < K < J$ in~$A$ and the paths
of $I$ and~$J$ pass through~$v$, so does the path of~$K$.
\end{proposition}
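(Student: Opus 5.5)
The plan is to derive the statement from the consecutive collision property~\ref{item:consecutive}, applied to the three actors $I<K<J$, and to use the minimality of~$v$ to rule out the alternative that the property allows. Fix $I < K < J$ in~$A$ with $P_I$ and~$P_J$ both passing through~$v$. Actor indices follow the source order, so $x_I \preceq x_K \preceq x_J$. I first dispose of coincident sources. If $x_K$ equals $x_I$ or $x_J$, then $P_K$ shares its initial vertex with another path of~$A$. That initial vertex is a crossing, and it precedes every other vertex of $P_K$ in the linear order, since it is $\lessdot$-minimal on that path. So either this vertex is $v$ itself, in which case $P_K$ passes through~$v$ and we are done, or it lies strictly before~$v$, contradicting the choice of~$v$. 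I will have to be a little careful with these degenerate cases: if for instance $x_I=x_K$, the shared source is a crossing that is $\lessdot$-below $v$ unless it equals~$v$.

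With all three sources distinct, $x_I \prec x_K \prec x_J$ in~$\mathcal{X}$, and \ref{item:consecutive} applies to the paths $P_I$, $P_K$, $P_J$ meeting at~$v$. It gives two alternatives: either $P_K$ passes through~$v$, or $P_K$ intersects $P_I$ or $P_J$ at some vertex $u$ before~$v$. Here ``before'' means on the initial segments of those paths up to~$v$, so $u \lessdot v$ and $u \neq v$. In the second alternative, $u$ is a shared vertex of two paths of~$A$, hence a crossing among the paths of~$A$. Since $u \lessdot v$, it precedes $v$ in every linear extension, which contradicts the assumption that $v$ is the first crossing. So only the first alternative survives, and the block is contiguous.

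The step I expect to need the most care is interpreting ``intersect one of those paths before~$v$'' precisely, so that the intersection point really is $\lessdot$-below $v$ and not merely a vertex that comes earlier along $P_K$. My reading is that the intersection lies on the portion of $P_I$ or $P_J$ preceding~$v$, so it is a vertex $u$ with a directed path from $u$ to~$v$. That places $u$ before $v$ in any linear extension, regardless of which extension was fixed in \Cref{def:spacetime-graph}. If the intended reading were instead ``before $v$ in time along $P_K$'', the same conclusion would still follow, provided comparability is measured by the fixed linear order. I will state the reading explicitly and note that either version yields a crossing strictly earlier than~$v$.
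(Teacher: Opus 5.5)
Your proof is correct and follows the paper's argument essentially step for step. Coincident sources are handled by noting that the shared source is a crossing, which must therefore equal~$v$; distinct sources are handled by applying \ref{item:consecutive} and excluding the earlier intersection by the minimality of~$v$. There is one small slip in the coincident-source case: the $\lessdot$-minimality of the shared source on $P_K$ is beside the point, since it is not yet known that $v \in P_K$. The correct reason, which you also give and which the paper uses, is that $v$ lies on $P_I$ (resp.\ $P_J$), and that path starts at the shared vertex.
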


\begin{proof}
The sources satisfy $x_I \preceq x_K \preceq x_J$. Suppose
first $x_K = x_I$. Then $P_K$ and $P_I$ share their source
vertex~$u$---a crossing among the paths of~$A$. Since $u$ is
the source of the directed path~$P_I$ and $v$ lies on that
path, $u \lessdot v$ or $u = v$; the linear order extends the
time order, and no crossing precedes the first one~$v$, so
$u = v$ and $P_K$ passes through~$v$. The case $x_K = x_J$ is
symmetric.

Otherwise $x_I \prec x_K \prec x_J$, and the consecutive
collision property~\ref{item:consecutive}, applied to the
meeting of $P_I$ and~$P_J$ at~$v$, forces $P_K$ either to pass
through~$v$ or to intersect $P_I$ or~$P_J$ before~$v$. The
second alternative is a crossing among the paths of~$A$ that
precedes~$v$ in the linear order---impossible, since $v$ is the
first one. Hence $P_K$ passes through~$v$.
\end{proof}

Applied with $A$ the set of active particles,
\Cref{prop:adjacent-crossings} says that the particles
that rehearsal (\Cref{sec:proof-rehearsal}) finds at its next
crossing vertex form a contiguous block of the active set; in
particular, when exactly two active paths cross there, the two
particles are adjacent among the active ones.

\subsection{Attribution}
\label{sec:proof-attribution}

A performance specifies collision locations and final positions
but not identities. \emph{Attribution} constructs the underlying
casting by tracking which particle ends where. We first describe
what happens at a single collision, then explain how to combine
these local operations.

\subsubsection{The two-particle case}

At a pairwise annihilation, two path segments arrive and two
ghost paths depart. The \emph{far-side principle} determines the
gluing:

\begin{principle}[Far-side principle]\label{principle:swap}
At a collision between particles $I < J$ filling ghost
pair~$j$, each particle is glued to the ghost path departing
on the far side: the higher-indexed particle~$J$ to the
ghost at the leftward position, the lower-indexed
particle~$I$ to the ghost at the rightward position.
\end{principle}

\Cref{fig:simplified-annihilation} illustrates the case
$\varepsilon_j = +1$ (that is, $a_j \preceq b_j$).
Two ghost paths depart the collision vertex toward
positions $a_j$ and $b_j$, see
\cref{fig:simplified-annihilation-a}. The far-side principle
glues each incoming path to an outgoing ghost path,
see \cref{fig:simplified-annihilation-b}: particle~$J$
(higher index) is glued to the leftward ghost
at~$a_j$, and particle~$I$ (lower index) is glued
to the rightward ghost at~$b_j$. 

The case
$\varepsilon_j = -1$ (meaning $b_j \prec a_j$) is
symmetric (\Cref{fig:simplified-annihilation-negative}):
the labels $a_j$ and $b_j$ swap sides, so
particle~$J$ again goes to the leftward position---now
$b_j$---and particle~$I$ to the rightward one---now $a_j$.
In both cases, the
index ordering is reversed relative to the ghost pair's
spatial ordering.

%
%

\begin{figure}[t]
\centering

\subfloat[]{
\begin{tikzpicture}[scale=0.8]

\def\width{3.7}
\def\height{3}

\draw[gray, thick] (-\width/2, 0) -- (\width/2, 0);
\draw[gray, thick] (-\width/2, \height) -- (\width/2, \height);
\node[right] at (\width/2, 0) {$\mathcal{X}$};
\node[right] at (\width/2, \height) {$\mathcal{Y}$};

\coordinate (startI) at (-1.0, 0);
\coordinate (startJ) at (1.0, 0);

\coordinate (collision) at (0, 1.5);

\coordinate (endA) at (-1.0, \height);    
\coordinate (endB) at (1.0, \height);     

\draw[colA, line width=2.5pt] (startI) -- (collision);
\draw[colB, line width=1.5pt, decorate,
      decoration={snake, segment length=5pt, amplitude=1pt}]
    (startJ) -- (collision);

\draw[colGone, line width=1.5pt, dashed,
      dash pattern={on 4pt off 2pt}]
    (collision) -- (endA);
\draw[colGone, line width=1.5pt, dashed,
      dash pattern={on 4pt off 2pt},
      double, double distance=2pt]
    (collision) -- (endB);

\node[circle, fill=black, inner sep=2.5pt] at (collision) {};

\node[circle, fill=colA, inner sep=2.5pt] at (startI) {};
\node[circle, fill=colB, inner sep=2.5pt] at (startJ) {};

\node[circle, draw=colGone, fill=white, line width=1pt,
      inner sep=2pt] at (endA) {};
\node[circle, draw=colGone, fill=white, line width=1pt,
      inner sep=2pt] at (endB) {};

\node[below left] at (startI) {$I$};
\node[below right] at (startJ) {$J$};
\node[above left] at (endA) {$a_j$};
\node[above right] at (endB) {$b_j$};

\end{tikzpicture}
\label{fig:simplified-annihilation-a}
}
\hfill
\subfloat[]{
\begin{tikzpicture}[scale=0.8]

\def\width{3.7}
\def\height{3}

\draw[gray, thick] (-\width/2, 0) -- (\width/2, 0);
\draw[gray, thick] (-\width/2, \height) -- (\width/2, \height);
\node[right] at (\width/2, 0) {$\mathcal{X}$};
\node[right] at (\width/2, \height) {$\mathcal{Y}$};

\coordinate (startI) at (-1.0, 0);
\coordinate (startJ) at (1.0, 0);
\coordinate (V) at (0, 1.5);
\coordinate (endA) at (-1.0, \height);    
\coordinate (endB) at (1.0, \height);     

\draw[colA, line width=2.5pt] (startI) -- (V);
\draw[colA, line width=2.5pt, dashed,
      dash pattern={on 4pt off 2pt},
      double, double distance=2pt]
    (V) -- (endB);

\draw[colB, line width=1.5pt, decorate,
      decoration={snake, segment length=5pt, amplitude=1pt}]
    (startJ) -- (V);
\draw[colB, line width=1.5pt, dashed,
      dash pattern={on 5pt off 3pt}, decorate,
      decoration={snake, segment length=5pt, amplitude=1pt}]
    (V) -- (endA);

\node[circle, fill=white, inner sep=0pt, minimum size=6pt,
      line width=0.8pt, draw=black] at (V) {};

\node[circle, fill=colA, inner sep=2.5pt] at (startI) {};
\node[circle, fill=colB, inner sep=2.5pt] at (startJ) {};

\node[circle, draw=colB, fill=white, line width=1pt,
      inner sep=2pt] at (endA) {};
\node[circle, draw=colA, fill=white, line width=1pt,
      inner sep=2pt] at (endB) {};

\node[below left] at (startI) {$I$};
\node[below right] at (startJ) {$J$};
\node[above left] at (endA) {$a_j$};
\node[above right] at (endB) {$b_j$};

\end{tikzpicture}
\label{fig:simplified-annihilation-b}
}

\caption{\textbf{Two-particle annihilation, case
$\varepsilon_j = +1$: $a_j \preceq b_j$.}
(a)~A \emph{performance}: particles $I$ (thick) and $J$
(wavy) collide; both are destroyed and two ghost paths emerge
(dashed). Four distinct styles emphasize that no identity
persists through the collision.
(b)~The corresponding \emph{casting}, produced by attribution
via the far-side principle: particle~$I$ (left, thick) is
glued to the rightward ghost at~$b_j$ (thick dashed);
particle~$J$ (right, wavy) is glued to the leftward ghost
at~$a_j$ (wavy dashed). The spatial ordering is reversed.}
\label{fig:simplified-annihilation}
\end{figure}

%

\begin{figure}[t]
\centering

\subfloat[]{
\begin{tikzpicture}[scale=0.8]

\def\width{3.7}
\def\height{3}

\draw[gray, thick] (-\width/2, 0) -- (\width/2, 0);
\draw[gray, thick] (-\width/2, \height) -- (\width/2, \height);
\node[right] at (\width/2, 0) {$\mathcal{X}$};
\node[right] at (\width/2, \height) {$\mathcal{Y}$};

\coordinate (startI) at (-1.0, 0);
\coordinate (startJ) at (1.0, 0);

\coordinate (collision) at (0, 1.5);

\coordinate (endB) at (-1.0, \height);    
\coordinate (endA) at (1.0, \height);     

\draw[colA, line width=2.5pt] (startI) -- (collision);
\draw[colB, line width=1.5pt, decorate,
      decoration={snake, segment length=5pt, amplitude=1pt}]
    (startJ) -- (collision);

\draw[colGone, line width=1.5pt, dashed,
      dash pattern={on 4pt off 2pt},
      double, double distance=2pt]
    (collision) -- (endB);
\draw[colGone, line width=1.5pt, dashed,
      dash pattern={on 4pt off 2pt}]
    (collision) -- (endA);

\node[circle, fill=black, inner sep=2.5pt] at (collision) {};

\node[circle, fill=colA, inner sep=2.5pt] at (startI) {};
\node[circle, fill=colB, inner sep=2.5pt] at (startJ) {};

\node[circle, draw=colGone, fill=white, line width=1pt,
      inner sep=2pt] at (endB) {};
\node[circle, draw=colGone, fill=white, line width=1pt,
      inner sep=2pt] at (endA) {};

\node[below left] at (startI) {$I$};
\node[below right] at (startJ) {$J$};
\node[above left] at (endB) {$b_j$};
\node[above right] at (endA) {$a_j$};

\end{tikzpicture}
\label{fig:simplified-annihilation-neg-a}
}
\hfill
\subfloat[]{
\begin{tikzpicture}[scale=0.8]

\def\width{3.7}
\def\height{3}

\draw[gray, thick] (-\width/2, 0) -- (\width/2, 0);
\draw[gray, thick] (-\width/2, \height) -- (\width/2, \height);
\node[right] at (\width/2, 0) {$\mathcal{X}$};
\node[right] at (\width/2, \height) {$\mathcal{Y}$};

\coordinate (startI) at (-1.0, 0);
\coordinate (startJ) at (1.0, 0);
\coordinate (V) at (0, 1.5);
\coordinate (endB) at (-1.0, \height);    
\coordinate (endA) at (1.0, \height);     

\draw[colA, line width=2.5pt] (startI) -- (V);
\draw[colA, line width=2.5pt, dashed,
      dash pattern={on 4pt off 2pt}]
    (V) -- (endA);

\draw[colB, line width=1.5pt, decorate,
      decoration={snake, segment length=5pt, amplitude=1pt}]
    (startJ) -- (V);
\draw[colB, line width=1.5pt, dashed,
      dash pattern={on 5pt off 3pt}, decorate,
      decoration={snake, segment length=5pt, amplitude=1pt},
      double, double distance=2pt]
    (V) -- (endB);

\node[circle, fill=white, inner sep=0pt, minimum size=6pt,
      line width=0.8pt, draw=black] at (V) {};

\node[circle, fill=colA, inner sep=2.5pt] at (startI) {};
\node[circle, fill=colB, inner sep=2.5pt] at (startJ) {};

\node[circle, draw=colB, fill=white, line width=1pt,
      inner sep=2pt] at (endB) {};
\node[circle, draw=colA, fill=white, line width=1pt,
      inner sep=2pt] at (endA) {};

\node[below left] at (startI) {$I$};
\node[below right] at (startJ) {$J$};
\node[above left] at (endB) {$b_j$};
\node[above right] at (endA) {$a_j$};

\end{tikzpicture}
\label{fig:simplified-annihilation-neg-b}
}

\caption{\textbf{Two-particle annihilation, case
$\varepsilon_j = -1$: $b_j \prec a_j$.}
The companion of \Cref{fig:simplified-annihilation}: the ghost
pair is now ordered right-left, so the leftward position
is~$b_j$ (slot $(j,2)$) and the rightward is~$a_j$ (slot
$(j,1)$).
(a)~A \emph{performance}: the collision and the two anonymous
ghost paths are exactly as before; only the slot labels have
swapped sides.
(b)~The corresponding \emph{casting}: the far-side principle
again sends particle~$I$ (left, thick) to the rightward
ghost---now $a_j$ (thick dashed)---and particle~$J$ (right,
wavy) to the leftward ghost---now $b_j$ (wavy dashed). In
both cases the higher-indexed particle takes the leftward
position; what changes with the sign is which \emph{slot}
that position carries.}
\label{fig:simplified-annihilation-negative}
\end{figure}

\subsubsection{From local to global}

Attribution takes a performance as input and produces a
casting $(\pi, \paths)$. For each initial particle~$I$, follow
its path to the collision vertex (if any) and apply the
far-side principle (\Cref{principle:swap}) to determine which
ghost path it joins. The
bijection~$\pi$ combines local and global structure: the second
coordinate of $\pi(I) = (j, m)$ comes from the far-side principle
(which assigns $I$ to either the first or second path in the
ordered pair, giving $m \in \{1,2\}$), while the first
coordinate~$j$ comes from the performance's global numbering
of ghost pairs.

\begin{definition}[Attribution]\label{def:attribution}
\emph{Attribution} constructs a casting $(\pi, \paths)$ from a
performance by applying the far-side principle at each collision
vertex. For each initial particle~$I$:
\begin{itemize}
\item If $I$ collides, the far-side principle glues its incoming
  path to an outgoing ghost path, determining the endpoint
  $\pi(I) \in \Ghosts$ and the glued path~$P_I$ (at a vertex
  hosting several ghost pairs, the pairing rule below first
  decides which pair $I$ fills).
\item If $I$ survives, $\pi(I) \in \Survivors$ and $P_I$ is
  the path from~$x_I$ to the survivor position (this path
  avoids all collision vertices).
\end{itemize}
The output is a casting $(\pi, (P_I)_{I \in \Actors})$.
\end{definition}

Since both colliding particles are destroyed, each particle
participates in at most one collision. Attribution therefore
applies the far-side principle separately at each collision
vertex---no chaining arises (unlike coalescence, where the
heir continues and may collide again).

\smallskip\noindent\textbf{Pairing rule (high-indegree
vertices).}
At a collision vertex hosting several ghost pairs
(\Cref{def:collision-diagram}), attribution must additionally
decide which arriving particles fill which pair. Sort the
arriving particles $I_1 < I_2 < \cdots < I_m$ and pair them
consecutively, matching the pairs to the anchors in order:
$(I_1, I_2)$ fills the ghost pair at anchor~$(v, 1)$,
$(I_3, I_4)$ the pair at anchor~$(v, 2)$, and so on. If $m$
is odd, the last particle~$I_m$ continues along the departing
path (possibly of length zero, when a survivor ends at the
vertex). Each pair is then glued to its own anchor's two
ghost paths by the far-side principle, exactly as in the
binary case. Rehearsal processes such a vertex by the same
consecutive pairing (\Cref{sec:proof-rehearsal},
page~\pageref{high-indegree}), where we also explain why the
choice of this convention is immaterial.

\subsubsection{Example}

\begin{example}[$n = 4$, $k = 1$]\label{ex:attribution-n4}
In \Cref{fig:intro-annihilation}, particles~$2$ and~$3$ ($I = 2
< 3 = J$) collide at~$c$. The ghost pair has $a \preceq b$,
so $\varepsilon = +1$. The far-side principle glues particle~$3$
($J$, higher index) to the leftward ghost at~$a$, and
particle~$2$ ($I$, lower index) to the rightward ghost
at~$b$.
Survivors~$1$ and~$4$ pass through the collision diagram to
their final positions. The resulting casting has
$\pi(1) = 1$ (survivor slot at~$y_1$), $\pi(2) = (1,2)$
(the second ghost from the first and only ghost pair),
$\pi(3) = (1,1)$
(the first ghost from the first and only ghost pair),
$\pi(4) = 2$ (survivor slot at~$y_2$),
with glued paths:
\begin{itemize}
\item $P_1$: from $x_1$ to $y_1$ (survivor);
\item $P_2$: from $x_2$ to $c$, then to $b$ (ghost);
\item $P_3$: from $x_3$ to $c$, then to $a$ (ghost);
\item $P_4$: from $x_4$ to $y_2$ (survivor).
\end{itemize}
This is the successful casting shown in
\Cref{fig:successful-casting}.
\end{example}

\subsubsection{Candidacy emerges}

\begin{proposition}\label{prop:attribution-candidate}
Let $(\pi, \paths)$ be the casting from attributing a performance.
Then $\pi$ is a candidate.
\end{proposition}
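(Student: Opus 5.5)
The plan is to verify the candidacy condition of \Cref{def:candidate} one ghost pair at a time, since both the condition and the attribution map (\Cref{def:attribution}) are local to the ghost pairs.

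Fix $j \in \{1,\ldots,k\}$. By the performance's global numbering, ghost pair~$j$ sits at a unique anchor $(v,i)$ of some collision vertex~$v$. The first step is to identify the two actors that attribution sends to the slots $(j,1)$ and $(j,2)$. The pairing rule at~$v$ sorts the arriving particles $I_1 < \cdots < I_m$ and assigns the consecutive pair $(I_{2i-1}, I_{2i})$ to anchor $(v,i)$. So exactly two actors, $I := I_{2i-1} < J := I_{2i}$, have $\pi$-images in $\{(j,1),(j,2)\}$. Each of them joins one of the two ghost paths of that anchor, and the two paths are distinct slots. Every other actor either survives, and is mapped into~$\Survivors$, or fills a different anchor, and is mapped into ghost slots with a different first coordinate. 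Hence $\pi^{-1}(j,1)$ and $\pi^{-1}(j,2)$ are well defined and equal to $\{I,J\}$ in some order.

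The second step applies the far-side principle (\Cref{principle:swap}). This principle sends the higher-indexed actor~$J$ to the leftward of the two ghost positions and~$I$ to the rightward one. If $\varepsilon_j = +1$, then $a_j \preceq b_j$, the leftward slot is $(j,1)$, and so $\pi^{-1}(j,1) = J > I = \pi^{-1}(j,2)$. If $\varepsilon_j = -1$, then $b_j \prec a_j$, the leftward slot is $(j,2)$, and so $\pi^{-1}(j,1) = I < J = \pi^{-1}(j,2)$. These are exactly the two clauses of \Cref{def:candidate}. Since $j$ was arbitrary, $\pi$ is a candidate.

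The only point requiring care is the degenerate case $a_j = b_j$, where ``leftward'' is ambiguous. Here I would read the far-side principle through the convention of \Cref{def:ghost-sign}: a tie is treated as $\varepsilon_j = +1$, so the leftward slot is taken to be $(j,1)$, which makes the two conventions consistent. I would also note that $\pi$ is a genuine bijection. Every actor receives exactly one role: survivors get distinct survivor slots, because their paths end at the distinct survivor vertices in order, and each ghost slot is filled exactly once, because the anchors are in bijection with $\{1,\ldots,k\}$. I expect no real obstacle beyond this bookkeeping.
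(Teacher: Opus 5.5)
Your proof is correct and matches the paper's: for each ghost pair~$j$, apply the far-side principle to the two actors $I<J$ that fill it, and read off $\pi^{-1}(j,1)$ and $\pi^{-1}(j,2)$ in the cases $\varepsilon_j=\pm1$, which gives exactly the two clauses of \Cref{def:candidate}. The paper leaves your extra bookkeeping implicit: identifying the pair through the consecutive pairing rule at high-indegree vertices, and treating a tie $a_j=b_j$ as $a_j$ leftward, which agrees with \Cref{def:ghost-sign}.
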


\begin{proof}
At the collision creating ghost pair~$j$, let particles $I < J$
collide. When $a_j \preceq b_j$ (so $\varepsilon_j = +1$), the
far-side principle assigns $\pi(J) = (j,1)$ (position $a_j$, the
leftward ghost) and $\pi(I) = (j,2)$ (position~$b_j$, rightward).
Then $\pi^{-1}(j,1) = J > I = \pi^{-1}(j,2)$, matching the
candidacy condition for $\varepsilon_j = +1$. When
$b_j \prec a_j$ (so $\varepsilon_j = -1$), the far-side principle
assigns $\pi(I) = (j,1)$ (position~$a_j$, now rightward) and
$\pi(J) = (j,2)$ (position~$b_j$, leftward). Then
$\pi^{-1}(j,1) = I < J = \pi^{-1}(j,2)$, matching the
candidacy condition for $\varepsilon_j = -1$.
\end{proof}

\subsection{Rehearsal}
\label{sec:proof-rehearsal}

\emph{Rehearsal} reverses attribution: given a candidate casting
$(\pi, \paths)$, we attempt to interpret it as a performance by
scanning crossings in temporal order.

\subsubsection{The rehearsal algorithm}

\begin{enumerate}
\item Initialize: all particles are active.
\item Find the earliest crossing among active paths (ties
  broken by the lexicographically smallest pair of
  particles, making the scan deterministic). If none exist,
  success.
\item Test whether this crossing is valid (see below).
  \begin{itemize}
  \item If yes: record the collision, deactivate both particles,
    and continue scanning for the next crossing.
  \item If no: failure---the casting is ``failed.''
  \end{itemize}
\end{enumerate}
Since both participants are deactivated at each valid collision,
each particle is processed at most once.

\medskip\noindent\textbf{High-indegree vertices.}\label{high-indegree}
When $m > 2$ active particles meet at a single vertex,
sort them by initial index:
$I_1 < I_2 < \cdots < I_m$. Process consecutive pairs
$(I_1, I_2)$, $(I_3, I_4)$, etc.; each pair is tested (and,
if valid, recorded, at the vertex's next anchor) as a
separate annihilation. If $m$ is
odd, the last incoming particle~$I_m$
is not paired at this vertex and remains active. This pairing rule
is a convention: the only requirement is that attribution and
rehearsal use the same rule, so that the bijection between
successful castings and performances is preserved. Ghost
anonymity ensures that the specific convention does not affect
the annihilation weight.

\subsubsection{Valid vs.\ spurious crossings}

A crossing between particles $I < J$ is \emph{valid} if both
particles are destined for the same ghost pair:
$\{\pi(I), \pi(J)\} = \{(j,1), (j,2)\}$ for some ghost
pair~$j$---that is, the casting assigns $I$ and $J$ to the
two slots of a single twin pair. If the constraint fails
(one or both particles are destined for survivor slots, or
they map to different ghost pairs), the crossing is
\emph{spurious}.

\begin{definition}[Active set]\label{def:active-set}
At any stage of a run of rehearsal, the \emph{active set}
consists of the particles that have not been deactivated.
\end{definition}

\noindent
The active set is \emph{pair-complete}: for each ghost
pair~$j$, the two actors $\pi^{-1}(j,1)$ and $\pi^{-1}(j,2)$
are either both active or both inactive. Indeed, a valid
collision deactivates exactly the two actors assigned to one
ghost pair---we say the collision \emph{fills} that
pair---while actors destined for survivor slots are never
deactivated.

\begin{definition}[Successful casting]\label{def:successful}
A candidate casting is \emph{successful} if rehearsal finds every
crossing valid, recording an annihilation at each, and never
encounters a spurious one.
\end{definition}

\Cref{fig:castings} illustrates both outcomes. In
\subref{fig:successful-casting}, particles~$2$ and~$3$ are
assigned to the ghost pair: their crossing at $c$ is valid, and
rehearsal succeeds. In \subref{fig:failed-casting}, particles~$1$
and~$3$ are assigned to the ghost pair instead, but the first
crossing involves particles~$1$ and~$2$---a spurious crossing,
since particle~$2$ is destined for a survivor position.

\begin{remark}[Dual interpretation]\label{rem:dual-interpretation}
The rehearsal algorithm will serve two purposes:
\begin{itemize}
\item \textbf{As a classifier:} does this casting correspond
  to a performance? Success produces a performance; failure
  means it does not.
\item \textbf{As a pairing device:} a failed run halts at a
  specific spurious crossing, and swapping the two path
  suffixes there will pair the casting with another failed
  casting of opposite sign.
\end{itemize}
In \Cref{sec:proof-involution} the two purposes combine into a
single map, the involution~$\iota$: it fixes the successful
castings and matches the failed ones in mutually canceling
pairs.
\end{remark}

\subsubsection{The stall obstruction: crossings must exist}
\label{sec:proof-no-stall}

The rehearsal algorithm terminates either when it encounters a
spurious crossing (failure) or when no crossings remain among
active paths (success). A priori there is a third conceivable
outcome, a \emph{stall}. Success (\Cref{def:successful})
means only that the run never crashed on a spurious
crossing; a priori it does not guarantee that a performance
was reconstructed. A stall is a successful termination while
some ghost role is still unfilled. A stalled casting would break
the proof. Its run did not fail, so the casting would never be
paired off and canceled (\Cref{rem:dual-interpretation}); yet
it would yield no performance with the prescribed final
state---some ghost pair would have no collision to create it.
Its contribution to the determinant would remain in the signed
sum, matched by nothing.

The stall is ruled out in two steps, separating the
combinatorics from the geometry. The combinatorial core,
\Cref{lem:no-ghosts-order}, is a statement about candidate
bijections alone, making no reference to paths or crossings;
\Cref{lem:crossings-exist} then supplies the geometric input
through the order preservation of non-crossing paths
(\Cref{prop:order-preservation}). \Cref{prop:rehearsal-boundary}
records
the conclusion.

\begin{lemma}[Candidacy reverses each ghost pair]
\label{lem:no-ghosts-order}
Let $\pi$ be a candidate bijection and let $I < J$ be the two
actors of a ghost pair~$j$. Then
$y_{\pi(J)} \preceq y_{\pi(I)}$: the two targets are in weakly
\emph{reversed} spatial order.
\end{lemma}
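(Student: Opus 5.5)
The lemma is a statement about the candidate bijection alone, so I would prove it by a direct case split on the ghost sign~$\varepsilon_j$, unwinding \Cref{def:candidate} together with \Cref{def:ghost-sign}. No paths, crossings, or planarity enter. The only data are the two slots $(j,1)$ and $(j,2)$ with positions $a_j = y_{(j,1)}$ and $b_j = y_{(j,2)}$, and the two actors $I < J$ that $\pi$ sends to them.

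\emph{Case $\varepsilon_j = +1$.} By \Cref{def:ghost-sign} this means $a_j \preceq b_j$. Candidacy requires $\pi^{-1}(j,1) > \pi^{-1}(j,2)$. Since $\{\pi^{-1}(j,1), \pi^{-1}(j,2)\} = \{I, J\}$ with $I < J$, this forces $\pi(J) = (j,1)$ and $\pi(I) = (j,2)$. Hence $y_{\pi(J)} = a_j \preceq b_j = y_{\pi(I)}$, as claimed. This case includes the tie $a_j = b_j$, where the conclusion is an equality; that is why the statement says ``weakly''.

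\emph{Case $\varepsilon_j = -1$.} Here $b_j \prec a_j$, and candidacy requires $\pi^{-1}(j,1) < \pi^{-1}(j,2)$. So $\pi(I) = (j,1)$ and $\pi(J) = (j,2)$, giving $y_{\pi(J)} = b_j \prec a_j = y_{\pi(I)}$. In this case the reversal is even strict.

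There is no genuine obstacle. The only point needing care is the bookkeeping of which slot carries which position under each sign, together with the tie convention $\varepsilon_j = +1$ when $a_j = b_j$. This is exactly the computation already carried out in the proof of \Cref{prop:attribution-candidate}, read in the reverse direction: there candidacy was derived from the far-side principle, while here the spatial reversal is derived from candidacy. The role of the lemma downstream is to combine with \Cref{prop:order-preservation}: if the two actors of a ghost pair were both active and never crossed, their targets would have to be strictly increasing, which contradicts this weak reversal.
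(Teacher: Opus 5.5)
Your proof is correct and follows the paper's argument essentially line for line: you split on $\varepsilon_j$, candidacy determines which of $I<J$ occupies slot $(j,1)$, and the ghost-sign definition supplies $a_j \preceq b_j$ or $b_j \prec a_j$. Your remarks on the tie convention and on the downstream use with order preservation (the corollary that the two paths of each ghost pair must cross) also agree with the paper.
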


\begin{proof}
When $\varepsilon_j = +1$, candidacy gives
$\pi^{-1}(j,1) > \pi^{-1}(j,2)$, so $\pi(J) = (j,1)$ and
$\pi(I) = (j,2)$, and the targets satisfy
$y_{\pi(J)} = a_j \preceq b_j = y_{\pi(I)}$
(\Cref{def:ghost-sign}). When $\varepsilon_j = -1$, candidacy
gives $\pi^{-1}(j,1) < \pi^{-1}(j,2)$, so $\pi(I) = (j,1)$ and
$\pi(J) = (j,2)$, and
$y_{\pi(J)} = b_j \prec a_j = y_{\pi(I)}$.
\end{proof}

\begin{corollary}[Crossings exist]\label{lem:crossings-exist}
In a candidate casting, the paths of the two actors of each
ghost pair cross.
\end{corollary}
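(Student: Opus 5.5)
The plan is to argue by contradiction, combining \Cref{lem:no-ghosts-order} with the order preservation of non-crossing paths (\Cref{prop:order-preservation}). Fix a candidate casting $(\pi, \paths)$ and a ghost pair~$j$. Let $I < J$ be its two actors, $I = \min\{\pi^{-1}(j,1), \pi^{-1}(j,2)\}$ and $J$ the other one. Suppose, for contradiction, that the paths $P_I$ and~$P_J$ share no vertex.

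Apply \Cref{prop:order-preservation} to the two-element set $A = \{I, J\}$. Its hypothesis is exactly that the paths of~$A$ are pairwise non-crossing. It yields the strict order $y_{\pi(I)} \prec y_{\pi(J)}$. Along the way, its proof records that the sources are then distinct ($x_I \prec x_J$) and the endpoints distinct. On the other hand, \Cref{lem:no-ghosts-order} says that candidacy forces the weakly reversed order $y_{\pi(J)} \preceq y_{\pi(I)}$. Since $\prec$ is a linear order on~$\mathcal{Y}$, the two relations cannot hold together. This contradiction shows that $P_I$ and $P_J$ share a vertex, i.e.\ they cross.

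Two degenerate cases deserve a check, but both are already covered by the argument above. If $x_I = x_J$, the paths share their source vertex and so cross trivially. If $a_j = b_j$, the paths share their endpoint, which is again a shared vertex; this case is consistent with the weak inequality of \Cref{lem:no-ghosts-order}. The only step needing care is that \Cref{prop:order-preservation} delivers a \emph{strict} inequality, so that it genuinely contradicts a weak reversed one. I take that strictness directly from the statement of the proposition rather than reproving it. I expect no real obstacle beyond this.
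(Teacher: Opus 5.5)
Your proof is correct and is essentially the paper's argument: you assume the two paths are disjoint, apply \Cref{prop:order-preservation} to the pair to get the strict order $y_{\pi(I)} \prec y_{\pi(J)}$, and contradict the weakly reversed order of \Cref{lem:no-ghosts-order}. The only difference is cosmetic: the paper first handles the case $a_j = b_j$ separately (the paths share an endpoint), whereas you rightly note that the strict inequality already covers that case.
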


\begin{proof}
Let $I < J$ be the two actors of ghost pair~$j$. If
$a_j = b_j$, the two paths share their endpoint vertex
and hence cross. Otherwise the endpoints are distinct:
if the paths did not cross,
\Cref{prop:order-preservation}, applied to the
two-path family $\{P_I, P_J\}$, would give
$y_{\pi(I)} \prec y_{\pi(J)}$---contradicting
\Cref{lem:no-ghosts-order}.
\end{proof}

\begin{proposition}[Successful rehearsal produces a
performance]\label{prop:rehearsal-boundary}
The output of a successful run of rehearsal on a candidate
casting $(\pi, \paths)$ is a performance with the prescribed
final state~$\FinalState$.
\end{proposition}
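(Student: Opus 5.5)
The plan is to show first that a successful run leaves no ghost pair unfilled, and then to check that the recorded collisions, together with the casting's paths, assemble into a performance for~$\FinalState$. Both steps use the pair-completeness of the active set and \Cref{lem:crossings-exist}.

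\emph{Step 1: no stall.} Suppose the run ends successfully, but some ghost pair~$j$ has not been filled. By pair-completeness, both actors $I<J$ with $\{\pi(I),\pi(J)\}=\{(j,1),(j,2)\}$ are still active. By \Cref{lem:crossings-exist}, their paths $P_I$ and $P_J$ share a vertex. That vertex is a crossing among active paths, so the algorithm would not have stopped with ``no crossings remain''. This contradicts termination by success. Hence every ghost pair is filled by exactly one valid collision. Since there are $k$ pairs and each collision deactivates two actors, exactly $k$ collisions are recorded. The $s$ actors destined for survivor slots are never deactivated.

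\emph{Step 2: assembling the performance.} For each recorded collision at vertex~$v$ filling pair~$j$, I take three pieces. The collision diagram receives the segments of $P_I$ and $P_J$ from their sources to~$v$. Ghost pair~$j$ is given by the suffixes of $P_I$ and $P_J$ after~$v$, ordered by slot: the suffix of the actor in slot $(j,1)$ is $\Gamma_{j,1}$, and the other suffix is $\Gamma_{j,2}$. The numbering sends the anchor to~$j$. At a high-indegree vertex, the anchors are numbered in the consecutive-pairing order, as in the rehearsal rule.

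Survivor actors keep their full paths to $y_\ell$. These paths avoid every collision vertex and every other survivor path before the end of the run, since any shared vertex involving an active survivor path would have been a spurious crossing. Similarly, a pre-collision segment meets another path only at the collision vertex where it ends, because rehearsal scans crossings in temporal order and the first crossing involving each active path was valid. These facts are the defining conditions of \Cref{def:collision-diagram}. The parity condition $m+d$ even holds by the consecutive pairing, with $d=1$ exactly when an odd leftover actor passes on or ends at~$v$.

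The final positions match $\FinalState$: ghost slot $(j,m)$ ends at $y_{(j,m)}$ because $P_{\pi^{-1}(j,m)}$ ends there, and survivor slots likewise. The weight is preserved automatically, because the edges are just redistributed between the diagram and the ghost paths.

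\emph{Main obstacle.} The delicate point is the ``distinct paths meet only at collision vertices'' condition when collisions happen at shared vertices. Once an actor is deactivated, its later segment is a ghost path and may freely meet anything. Before deactivation, its path must touch no active path except at its own collision. I would argue this by induction along the linear order of \Cref{def:spacetime-graph}. Any earlier meeting between two paths that were both active at that time would have been processed first. That processing either recorded a collision, which is then this actor's own collision, or caused failure, which is excluded for a successful run. \Cref{prop:adjacent-crossings} guarantees that the simultaneous meetings at one vertex form a contiguous block, so the consecutive pairing is well defined.
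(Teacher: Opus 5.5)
Your proof is correct and follows the paper's own argument. Step~1 is exactly the paper's ``every ghost pair is filled'' paragraph (pair-completeness plus \Cref{lem:crossings-exist}), and your check that the recorded ghost suffixes and the untouched survivor paths end at the prescribed positions is the paper's second paragraph. Your extra verification of the collision-diagram conditions goes beyond what the paper writes, but two of its claims need the exception you yourself mention in the parity remark. The claim that survivor paths avoid every collision vertex, and the claim that each pre-collision segment meets other paths only where it ends, both fail for the odd leftover at a high-indegree vertex, which passes through (or ends at) that collision vertex as its $d=1$ departing path.
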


\begin{proof}
\emph{Every ghost pair is filled.} By pair-completeness of
the active set (\Cref{def:active-set}), both actors of each
unfilled ghost pair are still active at every stage of the
run. If some ghost pair were unfilled at termination, the
crossing of its actors' paths (\Cref{lem:crossings-exist})
would be a crossing between two active paths, and rehearsal
would not have stopped.

\medskip\noindent\emph{The final state is the prescribed one.}
At the collision filling pair~$j$, the recorded ghost paths
are the suffixes of the two participants' casting paths, which
end at the prescribed positions $a_j$ and~$b_j$ (the
participants are mapped by~$\pi$ onto the slots $(j,1)$
and~$(j,2)$, \Cref{def:casting}). The surviving particles are
never deactivated and keep their casting paths, reaching the
prescribed survivor positions. The recorded performance
therefore has $k$ collisions and the final
state~$\FinalState$.
\end{proof}

\begin{remark}[A single obstruction]\label{rem:single-obstruction}
A priori, rehearsing a casting could break down in several
ways: the pair processed at a crossing might fail to be
adjacent in the active set; a crossing might be spurious; or
the run might stall, terminating with an unfilled ghost pair.
The planarity assumptions collapse this list: the
contiguous-block property (\Cref{prop:adjacent-crossings})
keeps every processed pair adjacent, and
\Cref{prop:rehearsal-boundary}
rules out the mismatched termination. The spurious
crossing is therefore the only genuine obstruction---and it is
exactly the failure that segment swap pairs off
(\Cref{sec:proof-involution}). In the companion paper the
mismatched termination is the subtle case, requiring a rigidity
analysis of partial assignments (the assignment-rigidity lemma
of~\cite{SU2026coalescence}); here ghost pairs are filled
whole---both actors deactivate together---so the one-pair
argument above suffices.
\end{remark}

\subsubsection{Attribution and rehearsal are mutually inverse}

\begin{proposition}\label{prop:bijection}
Attribution and rehearsal are mutual inverses between performances and
successful castings.
\end{proposition}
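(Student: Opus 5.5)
We have to prove two things: attribution sends every performance to a successful candidate casting, and rehearsal followed by attribution, and attribution followed by rehearsal, are both identities. \Cref{prop:attribution-candidate} already gives candidacy. \Cref{prop:rehearsal-boundary} already shows that a successful rehearsal outputs a performance with final state~$\FinalState$. What remains is (a) attributed castings are successful and rehearsal recovers the original performance, and (b) attributing the output of a successful rehearsal gives back the casting we started from.

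For (a), fix a performance~$\perf$ and let $(\pi,\paths)$ be its attribution. We run rehearsal and prove by induction on the collisions of~$\perf$, taken in the linear order of \Cref{def:spacetime-graph}, that at each stage the following hold:
\begin{itemize}
\item the active set is exactly the set of particles not yet destroyed in~$\perf$ before the current time;
\item the active paths agree, up to the current vertex, with the particle paths of the collision diagram.
\end{itemize}
Glued paths coincide with the diagram paths before their collision vertex. Distinct diagram paths meet only at collision vertices. Hence the earliest crossing among active casting paths is the earliest remaining collision vertex~$v$ of~$\perf$.

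Two points need care here. First, an active path might cross another active path inside its ghost suffix. This cannot happen before~$v$, because a ghost suffix of an active particle begins only at that particle's own collision vertex, which is $\succeq v$. Second, the set of active particles through~$v$ is exactly the set of arriving particles of~$v$, plus the continuing survivor if $d=1$. For this we need surviving paths to avoid collision vertices at which they do not terminate, which holds by \Cref{def:collision-diagram}.

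Since rehearsal and attribution use the same consecutive pairing, each pair $(I_{2i-1},I_{2i})$ is assigned by attribution to the two slots of the pair at anchor $(v,i)$. Each such crossing is therefore valid, and rehearsal records the same anchor, the same numbering (read off~$\pi$), and the same ghost suffixes. If $m$ is odd, the leftover particle~$I_m$ is a survivor ending at~$v$ and stays active, consistent with~$\perf$. At the end no active crossings remain, and the recorded performance is~$\perf$.

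For (b), start from a successful candidate casting and rehearse it to a performance~$\perf'$. The collision diagram of~$\perf'$ consists of the prefixes of the casting paths up to their recorded collision vertices, together with the survivor paths. The ghost paths are the suffixes, with slots inherited from~$\pi$. Attribution then applies the far-side principle at each collision. By \Cref{lem:no-ghosts-order}, candidacy already forces the higher-indexed actor onto the leftward target, which is exactly what the far-side principle prescribes. So attribution re-glues each prefix to its original suffix and gives back~$\pi$.

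The main obstacle is the induction step in (a). We must show that rehearsal never detects a crossing absent from the collision diagram, namely a meeting of two ghost suffixes, or of a ghost suffix with an active path, that occurs before some collision. The argument is that every crossing rehearsal examines involves only active particles. Active particles have not yet reached their own collision vertices, so their casting paths are still diagram paths at that point. Making this time comparison rigorous in the linear extension uses the fact that a path prefix precedes every vertex on its suffix, and this is where the choice of earliest crossing matters.
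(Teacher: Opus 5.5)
Your proof is correct and follows the paper's argument. You use the same two directions, with (b) resting on candidacy matching the far-side principle, and your induction in (a) makes precise the paper's one-line claim that the crossings of the attributed casting are the collisions of~$\perf$; that claim holds only for crossings among \emph{active} paths, since ghost suffixes cross other paths freely, as you note.

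One small fix in (b): argue from the candidacy condition itself rather than \Cref{lem:no-ghosts-order}. When $a_j = b_j$, positions do not determine slots, whereas candidacy with $\varepsilon_j = +1$ puts the higher-indexed actor in slot~$(j,1)$, exactly as attribution does.
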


\begin{proof}

\leavevmode

\emph{Rehearsal $\circ$ Attribution $= \id$.}
Let $\perf$ be a performance. The casting
$(\pi, \paths) = \Attribution(\perf)$ is a candidate
(\Cref{prop:attribution-candidate}). The crossings in $\paths$
are exactly the collisions in $\perf$, so rehearsal encounters
them in the same temporal order and processes each as valid
(the constraint
$\{\pi(I), \pi(J)\} = \{(j,1), (j,2)\}$ holds by
construction of attribution). Rehearsal therefore succeeds
and reconstructs~$\perf$.

\medskip

\emph{Attribution $\circ$ Rehearsal $= \id$.}
Let $(\pi, \paths)$ be a successful casting and $\perf$ the
performance produced by rehearsal---a performance with the
prescribed final state~$\FinalState$, by
\Cref{prop:rehearsal-boundary}. We verify that
$\Attribution(\perf) = (\pi, \paths)$ by checking each
collision in temporal order.

At a collision between particles $I < J$ filling ghost
pair~$j$, candidacy ensures the far-side principle assigns the
same roles as~$\pi$: if $\varepsilon_j = +1$, then
$\pi^{-1}(j,1) > \pi^{-1}(j,2)$, so the higher-indexed
particle maps to $(j,1)$ and the lower-indexed to
$(j,2)$---matching attribution. The case $\varepsilon_j = -1$
is symmetric. Since
both particles deactivate, later collisions are unaffected.
After all collisions, surviving particles retain their
original paths and endpoints. Thus attribution recovers
$(\pi, \paths)$. At a vertex bearing several ghost pairs,
both maps use the anchor order, so the correspondence
remains exact even when two pairs from that vertex carry
identical ghost paths.
\end{proof}

\subsection{Segment swap}
\label{sec:proof-swap}

Failed castings cancel via \emph{segment swap}, illustrated
in \Cref{fig:segment-swap}.

\begin{definition}[Segment swap]\label{def:segment-swap}
\emph{Segment swap} is a transformation on castings. Given a
casting $(\pi, \paths)$ where paths $P_I$ and $P_J$ cross at
vertex~$v$, segment swap at~$v$ produces a new casting
$(\pi', \paths')$:
\begin{itemize}
\item $P_I'$: prefix of $P_I$ from $x_I$ to $v$, then
  suffix of $P_J$ from $v$ to $y_{\pi(J)}$;
\item $P_J'$: prefix of $P_J$ from $x_J$ to $v$, then
  suffix of $P_I$ from $v$ to $y_{\pi(I)}$.
\end{itemize}
The bijection updates to
$\pi' = (I\; J) \circ \pi$, swapping the destinations.
\end{definition}

\begin{figure}[t]
\centering
\captionsetup[subfigure]{justification=centering}

\subfloat[Before swap: paths cross at $c$]{%
\begin{tikzpicture}[scale=0.6,
    vertex/.style={circle, fill, inner sep=1.5pt}]

\draw[gray, thick] (0, 0) -- (6, 0);
\draw[gray, thick] (0, 4) -- (6, 4);
\node[right] at (6.2, 0) {\small $t=0$};
\node[right] at (6.2, 4) {\small $t=T$};

\node[vertex, colA] (x1) at (1,0) {};
\node[below] at (x1) {$x_1$};
\node[vertex, colB] (x2) at (3,0) {};
\node[below] at (x2) {$x_2$};

\node[vertex, colA] (y1) at (2,4) {};
\node[above] at (y1) {$y_1$};
\node[vertex, colB] (y2) at (5,4) {};
\node[above] at (y2) {$y_2$};

\node[vertex, black] (v) at (3,2) {};
\node[right=0.2] at (v) {$c$};

\draw[particleA] (x1) -- (2,1) -- (v);
\draw[particleA] (v) -- (y1);

\draw[particleB] (x2) -- (v);
\draw[particleB] (v) -- (4,3) -- (y2);

\begin{scope}[shift={(7,1)}]
  \draw[particleA] (0,1.2) -- (0.7,1.2);
  \node[right] at (0.8,1.2) {\small $P_1$};
  \draw[particleB] (0,0.4) -- (0.7,0.4);
  \node[right] at (0.8,0.4) {\small $P_2$};
\end{scope}

\end{tikzpicture}%
\label{fig:swap-before}%
}
\hfill
\subfloat[After swap: endpoints exchanged]{%
\begin{tikzpicture}[scale=0.6,
    vertex/.style={circle, fill, inner sep=1.5pt}]

\draw[gray, thick] (0, 0) -- (6, 0);
\draw[gray, thick] (0, 4) -- (6, 4);
\node[right] at (6.2, 0) {\small $t=0$};
\node[right] at (6.2, 4) {\small $t=T$};

\node[vertex, colA] (x1) at (1,0) {};
\node[below] at (x1) {$x_1$};
\node[vertex, colB] (x2) at (3,0) {};
\node[below] at (x2) {$x_2$};

\node[vertex, colB] (y1) at (2,4) {};
\node[above] at (y1) {$y_1$};
\node[vertex, colA] (y2) at (5,4) {};
\node[above] at (y2) {$y_2$};

\node[vertex, black] (v) at (3,2) {};
\node[right=0.2] at (v) {$c$};

\draw[particleA] (x1) -- (2,1) -- (v);
\draw[particleA] (v) -- (4,3) -- (y2);

\draw[particleB] (x2) -- (v);
\draw[particleB] (v) -- (y1);

\begin{scope}[shift={(7,1)}]
  \draw[particleA] (0,1.2) -- (0.7,1.2);
  \node[right] at (0.8,1.2) {\small $P'_1$};
  \draw[particleB] (0,0.4) -- (0.7,0.4);
  \node[right] at (0.8,0.4) {\small $P'_2$};
\end{scope}

\end{tikzpicture}%
\label{fig:swap-after}%
}
\caption{The segment swap operation. (a)~Paths
$P_1$ (solid) and $P_2$ (double) cross at vertex $c$. (b)~After the
swap, final segments are exchanged: $P'_1$ follows $P_1$ to $c$, then
$P_2$'s tail to $y_2$; $P'_2$ follows $P_2$ to $c$, then $P_1$'s tail
to $y_1$. The paths still cross at $c$, but now go to swapped endpoints.}
\label{fig:segment-swap}
\end{figure}

\begin{lemma}[Swap properties]\label{lem:swap-properties}
Segment swap is:
\begin{enumerate}[label=(\roman*),
                   ref=\roman*]
\item \label{itm:swap-involution}
  an involution: swapping twice recovers the original;
\item \label{itm:swap-weight}
  weight-preserving: $w(P_I) \cdot w(P_J)
  = w(P_I') \cdot w(P_J')$;
\item \label{itm:swap-sign}
  sign-reversing: $\sgn(\pi') = -\sgn(\pi)$.
\end{enumerate}
\end{lemma}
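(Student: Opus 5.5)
We verify the three properties directly from \Cref{def:segment-swap}, using throughout that $P_I$ and $P_J$ pass through the common vertex~$v$. Each path then splits uniquely at $v$ into a prefix (source to~$v$) and a suffix ($v$ to endpoint). The splitting is unique because the spacetime graph is acyclic (\Cref{def:spacetime-graph}), so a directed path visits~$v$ at most once.

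\emph{(\ref{itm:swap-involution}) Involution.} After the swap, $P_I'$ and $P_J'$ still pass through~$v$, so the swap at~$v$ can be applied again. The prefix of $P_I'$ up to~$v$ is the prefix of~$P_I$, and its suffix is the suffix of~$P_J$; symmetrically for~$P_J'$. The unique-splitting remark guarantees that cutting $P_I'$ at $v$ returns exactly these two pieces. A second swap therefore reattaches the suffix of $P_I$ to the prefix of $P_I$, and likewise for~$J$, recovering $\paths$. On the bijection side, $(I\;J)\circ(I\;J)\circ\pi = \pi$. All other paths are untouched in both steps.

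\emph{(\ref{itm:swap-weight}) Weight preservation.} The weight of a path is the product of its edge weights (\Cref{def:path-weight}). The ring $R$ is commutative, so each weight factors as $w(P_I) = w(\mathrm{pre}_I)\,w(\mathrm{suf}_I)$, with the same factorization for~$J$. Hence
\[
w(P_I')\,w(P_J') = w(\mathrm{pre}_I)\,w(\mathrm{suf}_J)\,w(\mathrm{pre}_J)\,w(\mathrm{suf}_I) = w(P_I)\,w(P_J).
\]
The remaining paths are unchanged, so the full casting weight is preserved too.

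\emph{(\ref{itm:swap-sign}) Sign reversal.} Since $\pi' = (I\;J)\circ\pi$, we have $\pi'(I)=\pi(J)$ and $\pi'(J)=\pi(I)$, with all other values unchanged. Read through the fixed role order of \Cref{def:role-order}, $\pi'$ is $\pi$ composed with a single transposition, and this transposition is nontrivial because $I\neq J$. So $\sgn(\pi')=-\sgn(\pi)$.

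The only point needing care is that a path might revisit~$v$, which would make ``the prefix to~$v$'' ambiguous. Acyclicity excludes this, so there is no real obstacle. It is worth noting, for later use, two facts the lemma deliberately does not claim. First, candidacy need not be preserved by the swap; this must be handled in the involution's case analysis. Second, the formal sign from~\eqref{eq:formal} is not part of $\sgn(\pi)$; property~(\ref{itm:swap-sign}) concerns only the permutation sign.
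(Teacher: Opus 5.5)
Your proposal is correct and matches the paper's own argument essentially line by line: a second swap at $v$ restores the original suffixes, the edge weights of the two paths are only redistributed between them, and $\pi'$ differs from $\pi$ by a single transposition. You also make explicit two points the paper's three-line proof leaves implicit: acyclicity makes the split at $v$ unique, and the prefix--suffix factorization stays valid even when $P_I$ and $P_J$ share edges.
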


\begin{proof}
\eqref{itm:swap-involution} After swapping at $v$, the
paths $P_I'$ and $P_J'$ still cross at $v$. Swapping again
restores the original suffixes.

\eqref{itm:swap-weight} Each edge appears in exactly one
path before and after the swap, so the product of weights
is unchanged.

\eqref{itm:swap-sign} The bijection
$\pi' = (I\;J) \circ \pi$ differs from $\pi$ by a
transposition, so $\sgn(\pi') = -\sgn(\pi)$.
\end{proof}

\Cref{fig:involution-pair} shows the pairing in action on
the failed casting from \Cref{fig:failed-casting}. Segment swap
at the spurious crossing exchanges the suffixes of particles~$1$
and~$2$, producing a paired failed casting with reversed
permutation sign but identical weight. The crossing persists
after the swap and remains spurious. One point requires care:
the swap could conceivably create a new crossing at some
\emph{earlier} vertex, derailing the second run before it
returns to this one. It cannot: the swap alters only the two
suffixes after the crossing vertex, and every vertex of a
swapped suffix follows the crossing vertex in the time order,
so nothing changes at or before it. The second run therefore
halts at the same crossing, and a second swap restores the
original casting: the two failed castings form a mutually
canceling pair. The proof of \Cref{thm:involution} makes this
argument precise.

%
%

\begin{figure}[t]
\centering
\captionsetup[subfigure]{justification=centering}

\subfloat[Failed casting $\pi$]{%
\begin{tikzpicture}[scale=0.55]
  \begin{scope}
    \clip (-0.2,-0.2) rectangle (11.2,6.2);

    \foreach \x in {-1,...,12} {
      \foreach \t in {-1,...,7} {
        \pgfmathparse{mod(\x+\t,2)==0 ? 1 : 0}
        \ifnum\pgfmathresult>0
          \fill[gray!30] (\x,\t) circle (1.5pt);
        \fi
      }
    }

    \foreach \x in {-2,0,2,4,6,8,10,12} {
      \foreach \t in {-2,0,2,4,6,8} {
        \draw[gray!30, thin] (\x,\t) -- (\x-1,\t+1);
        \draw[gray!30, thin] (\x,\t) -- (\x+1,\t+1);
      }
    }
    \foreach \x in {-1,1,3,5,7,9,11} {
      \foreach \t in {-1,1,3,5,7} {
        \draw[gray!30, thin] (\x,\t) -- (\x-1,\t+1);
        \draw[gray!30, thin] (\x,\t) -- (\x+1,\t+1);
      }
    }
  \end{scope}

  \draw[->, thick] (-0.3,0) -- (11,0) node[right] {$x$};
  \draw[->, thick] (0,-0.3) -- (0,6.7) node[above] {$t$};

  \draw (-0.1, 6) -- (0.1, 6);
  \node[left] at (-0.15,6) {\small $T$};

  \coordinate (spurious) at (5,1);


  \draw[particleC] (8,0) -- (7,1) -- (6,2) -- (5,3) -- (4,4)
    -- (3,5) -- (2,6);

  \draw[particleA] (4,0) -- (5,1) -- (6,2) -- (7,3) -- (8,4)
    -- (9,5) -- (8,6);

  \draw[particleB] (6,0) -- (5,1) -- (4,2) -- (3,3)
    -- (2,4) -- (3,5) -- (4,6);

  \draw[particleD] (10,0) -- (9,1) -- (10,2) -- (9,3)
    -- (10,4) -- (9,5) -- (10,6);

  \draw[red, line width=2pt] (spurious) circle (0.4);
  \node[font=\large\bfseries, red, above right=-0.05
    and 0.15] at (spurious) {!};

  \fill[colA] (4,0) circle (4pt);
  \fill[colB] (6,0) circle (4pt);
  \fill[colC] (8,0) circle (4pt);
  \fill[colD] (10,0) circle (4pt);

  \node[below] at (4,-0.15) {\small $x_1$};
  \node[below] at (6,-0.15) {\small $x_2$};
  \node[below] at (8,-0.15) {\small $x_3$};
  \node[below] at (10,-0.15) {\small $x_4$};

  \draw[colGone, fill=white, line width=1.5pt] (2,6) circle (4pt);
  \fill[colB] (4,6) circle (4pt);
  \draw[colGone, fill=white, line width=1.5pt] (8,6) circle (4pt);
  \fill[colD] (10,6) circle (4pt);

  \node[above] at (2,6.15) {\small $a$};
  \node[above] at (4,6.15) {\small $y_1$};
  \node[above] at (8,6.15) {\small $b$};
  \node[above] at (10,6.15) {\small $y_2$};

  \begin{scope}[shift={(11.5,0.2)}]
    \draw[particleA] (0,5.6) -- (0.7,5.6);
    \node[right, align=left, inner sep=1pt] at (0.8,5.6)
      {\small $1 \to b$};
    \draw[particleB] (0,4.4) -- (0.7,4.4);
    \node[right, align=left, inner sep=1pt] at (0.8,4.4)
      {\small $2 \to y_1$};
    \draw[particleC] (0,3.2) -- (0.7,3.2);
    \node[right, align=left, inner sep=1pt] at (0.8,3.2)
      {\small $3 \to a$};
    \draw[particleD] (0,2.0) -- (0.7,2.0);
    \node[right, align=left, inner sep=1pt] at (0.8,2.0)
      {\small $4 \to y_2$};
  \end{scope}

\end{tikzpicture}%
\label{fig:involution-before}%
}

\medskip

\subfloat[Paired failed casting $\pi'=(1\;2)\circ\pi$]{%
\begin{tikzpicture}[scale=0.55]
  \begin{scope}
    \clip (-0.2,-0.2) rectangle (11.2,6.2);

    \foreach \x in {-1,...,12} {
      \foreach \t in {-1,...,7} {
        \pgfmathparse{mod(\x+\t,2)==0 ? 1 : 0}
        \ifnum\pgfmathresult>0
          \fill[gray!30] (\x,\t) circle (1.5pt);
        \fi
      }
    }

    \foreach \x in {-2,0,2,4,6,8,10,12} {
      \foreach \t in {-2,0,2,4,6,8} {
        \draw[gray!30, thin] (\x,\t) -- (\x-1,\t+1);
        \draw[gray!30, thin] (\x,\t) -- (\x+1,\t+1);
      }
    }
    \foreach \x in {-1,1,3,5,7,9,11} {
      \foreach \t in {-1,1,3,5,7} {
        \draw[gray!30, thin] (\x,\t) -- (\x-1,\t+1);
        \draw[gray!30, thin] (\x,\t) -- (\x+1,\t+1);
      }
    }
  \end{scope}

  \draw[->, thick] (-0.3,0) -- (11,0) node[right] {$x$};
  \draw[->, thick] (0,-0.3) -- (0,6.7) node[above] {$t$};

  \draw (-0.1, 6) -- (0.1, 6);
  \node[left] at (-0.15,6) {\small $T$};

  \coordinate (spurious) at (5,1);


  \draw[particleC] (8,0) -- (7,1) -- (6,2) -- (5,3) -- (4,4)
    -- (3,5) -- (2,6);

  \draw[particleA] (4,0) -- (5,1) -- (4,2) -- (3,3)
    -- (2,4) -- (3,5) -- (4,6);

  \draw[particleB] (6,0) -- (5,1) -- (6,2) -- (7,3) -- (8,4)
    -- (9,5) -- (8,6);

  \draw[particleD] (10,0) -- (9,1) -- (10,2) -- (9,3)
    -- (10,4) -- (9,5) -- (10,6);

  \draw[red, line width=2pt] (spurious) circle (0.4);
  \node[font=\large\bfseries, red, above right=-0.05
    and 0.15] at (spurious) {!};

  \fill[colA] (4,0) circle (4pt);
  \fill[colB] (6,0) circle (4pt);
  \fill[colC] (8,0) circle (4pt);
  \fill[colD] (10,0) circle (4pt);

  \node[below] at (4,-0.15) {\small $x_1$};
  \node[below] at (6,-0.15) {\small $x_2$};
  \node[below] at (8,-0.15) {\small $x_3$};
  \node[below] at (10,-0.15) {\small $x_4$};

  \draw[colGone, fill=white, line width=1.5pt] (2,6) circle (4pt);
  \fill[colA] (4,6) circle (4pt);
  \draw[colGone, fill=white, line width=1.5pt] (8,6) circle (4pt);
  \fill[colD] (10,6) circle (4pt);

  \node[above] at (2,6.15) {\small $a$};
  \node[above] at (4,6.15) {\small $y_1$};
  \node[above] at (8,6.15) {\small $b$};
  \node[above] at (10,6.15) {\small $y_2$};

  \begin{scope}[shift={(11.5,0.2)}]
    \draw[particleA] (0,5.6) -- (0.7,5.6);
    \node[right, align=left, inner sep=1pt] at (0.8,5.6)
      {\small $1 \to y_1$};
    \draw[particleB] (0,4.4) -- (0.7,4.4);
    \node[right, align=left, inner sep=1pt] at (0.8,4.4)
      {\small $2 \to b$};
    \draw[particleC] (0,3.2) -- (0.7,3.2);
    \node[right, align=left, inner sep=1pt] at (0.8,3.2)
      {\small $3 \to a$};
    \draw[particleD] (0,2.0) -- (0.7,2.0);
    \node[right, align=left, inner sep=1pt] at (0.8,2.0)
      {\small $4 \to y_2$};
  \end{scope}

\end{tikzpicture}%
\label{fig:involution-after}%
}

\caption{The sign-reversing involution in action: a matched
pair of failed castings related by segment swap at the
spurious crossing.
(a)~Particles~$1$ and~$2$ meet at the circled vertex~$v$, but
particle~$2$ is destined for a survivor position---spurious
crossing.
(b)~After the swap, suffixes are exchanged: particle~$1$ now
reaches~$y_1$, particle~$2$ now reaches~$b$.
The crossing persists and is still spurious, so a second
swap restores~(a). The permutation sign reverses
($\sgn(\pi') = -\sgn(\pi)$) while the weight is preserved,
so the two castings cancel.}
\label{fig:involution-pair}

\end{figure}

\subsection{The sign-reversing involution}
\label{sec:proof-involution}

\subsubsection{The swap criterion}

Segment swap of an arbitrary candidate casting need not
itself be a candidate casting. The next lemma pins down
exactly when it is, at the crossings that rehearsal tests.

\begin{lemma}[Swap criterion]\label{lem:key-lemma}
At an iteration of rehearsal on a candidate casting
$(\pi, \paths)$, let $I < J$ be the two active particles
whose crossing is being tested. The segment swap at this
crossing preserves candidacy if and only if the crossing is
spurious: $\{\pi(I), \pi(J)\} \neq \{(j,1), (j,2)\}$ for
every ghost pair~$j$.
\end{lemma}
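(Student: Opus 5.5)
Candidacy is a condition on each ghost pair separately, and the swap changes only the two destinations $\pi(I)$ and $\pi(J)$. So the plan is a case analysis on the types of these two roles. In every case, the candidacy conditions of ghost pairs containing neither $I$ nor $J$ are untouched, so only the pairs that contain $I$ or $J$ need to be checked.

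\emph{Valid crossing $\Rightarrow$ candidacy lost.} Suppose $\{\pi(I),\pi(J)\}=\{(j,1),(j,2)\}$. Then $\pi'=(I\;J)\circ\pi$ exchanges which of the two actors sits in slot $(j,1)$ and which in $(j,2)$. The strict inequality between $\pi^{-1}(j,1)$ and $\pi^{-1}(j,2)$ that candidacy demands is therefore reversed. Since $\varepsilon_j$ depends only on the fixed final state, $\pi'$ violates candidacy for pair $j$.

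\emph{Spurious crossing $\Rightarrow$ candidacy preserved.} I split into three subcases.
\begin{enumerate}
\item Both $\pi(I)$ and $\pi(J)$ are survivor slots. No ghost constraint is touched, so $\pi'$ is a candidate.
\item One of them, say $\pi(J)=(j,m)$, is a ghost slot and $\pi(I)$ is a survivor slot (or the symmetric situation). Then $I$ replaces $J$ in slot $(j,m)$, and the partner $K=\pi^{-1}(j,3-m)$ stays put. Candidacy for pair $j$ survives exactly when $I$ and $J$ lie on the same side of $K$ in the index order.
\item They lie in different ghost pairs $j\neq j'$. Again $I$ replaces $J$ in pair $j$ and $J$ replaces $I$ in pair $j'$, so I need $I,J$ on the same side of both partners $K$ and $K'$.
\end{enumerate}

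The key input for subcases 2 and 3 comes from pair-completeness of the active set (\Cref{def:active-set}). Since $J$ is active, its ghost partner $K$ is also active. By \Cref{prop:adjacent-crossings}, applied to the active set $A$ with $v$ the first crossing among active paths, the actors through $v$ form a contiguous block of $A$. Rehearsal tests the pair $I<J$; at a vertex with exactly two active paths, $I$ and $J$ are adjacent in $A$. An active $K\neq I,J$ therefore cannot lie strictly between them, which gives exactly the ``same side'' condition.

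The main obstacle is high-indegree vertices. There, the tested pair $(I_{2r-1},I_{2r})$ is consecutive in the sorted list of active actors through $v$. By contiguity, that list is a consecutive run of $A$, so $I$ and $J$ are again adjacent in $A$. Hence no active actor, in particular no ghost partner, lies strictly between them. The argument above then goes through verbatim, with the same-side checks done against the partners $K$ and $K'$, which are active by pair-completeness.
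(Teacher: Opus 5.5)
Your proof is correct and follows the paper's argument. In the valid case the swap reverses the within-pair order. In the spurious case, pair-completeness makes each ghost partner $K$ active, and \Cref{prop:adjacent-crossings} together with consecutive pairing at high-indegree vertices makes $I$ and $J$ adjacent in the active set, so $K$ lies on the same side of both; your three subcases simply unpack the paper's single step in which exactly one slot of pair~$j$ belongs to $I$ or~$J$.
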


\begin{proof}
The swap exchanges the roles of $I$ and $J$: the new
bijection $\pi'$ satisfies $\pi'(I) = \pi(J)$ and
$\pi'(J) = \pi(I)$.

\medskip

\emph{Valid crossing
($\{\pi(I), \pi(J)\} = \{(j,1), (j,2)\}$).}
Before swap, candidacy requires $\pi^{-1}(j,1)$ and
$\pi^{-1}(j,2)$ to be ordered consistently with
$\varepsilon_j$. After swap, their ordering is reversed.
Candidacy is violated.

\medskip

\emph{Spurious crossing
($\{\pi(I), \pi(J)\} \neq \{(j,1), (j,2)\}$ for all $j$).}
We verify that $\pi'$ is still a candidate. First, the
tested particles are adjacent in the active set: the active
particles at the crossing vertex form a contiguous block of
the active set (\Cref{prop:adjacent-crossings}), and the
pairs tested there are consecutive in the block (the
high-indegree rule, \Cref{sec:proof-rehearsal}). Now consider
a ghost pair~$j$. If both slots $(j,1), (j,2)$ are outside
$\{\pi(I), \pi(J)\}$, the swap does not affect pair~$j$.
Otherwise, exactly one slot belongs to $I$ or $J$, and the
other belongs to some third actor~$K$. By pair-completeness
of the active set (\Cref{def:active-set}), $K$ is active: the
other slot of pair~$j$ is assigned to $I$ or~$J$, which is
active. Since $I$ and $J$ are adjacent in the active set,
$K$ lies outside the interval $[I, J]$: either $K < I < J$ or
$I < J < K$. In both cases, swapping $I$ and $J$ preserves
the ordering of $K$ relative to the participant of pair~$j$,
so candidacy for pair~$j$ is maintained.
\end{proof}

\subsubsection{The involution}

Combining rehearsal (the classifier) with segment swap (the
pairing mechanism) defines the involution on candidate
castings.

\begin{definition}[The involution $\iota$]
\label{def:global-involution}
For each candidate casting~$\casting$, set
\[
  \iota(\casting) =
    \begin{cases}
      \casting
        & \text{if rehearsal on $\casting$ succeeds,} \\[1ex]
      \parbox[t]{0.4\textwidth}{segment swap of $\casting$
        at the spurious crossing where rehearsal fails}
        & \text{otherwise.}
    \end{cases}
\]
\end{definition}

The theorem below verifies that $\iota$ is well
defined---the swap branch again produces a candidate
casting---and establishes its properties.

\subsubsection{The involution theorem}

\begin{theorem}[Involution theorem]\label{thm:involution}
The map $\iota$ is a weight-preserving, sign-reversing involution on
candidate castings. Its fixed points are exactly the successful castings.
\end{theorem}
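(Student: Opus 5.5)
The plan is to check, in order, four things about $\iota$: that it is well defined as a map on candidate castings, that it is an involution, that it preserves weight, and that it reverses sign. The statement about fixed points comes for free. A successful casting is fixed by definition. A failed casting is moved, because the swap replaces $\pi$ by $\pi' = (I\;J)\circ\pi \neq \pi$.

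\emph{Well-definedness.} Take a failed candidate casting $\casting$. Rehearsal halts at a spurious crossing $v$ between two active particles $I<J$. \Cref{lem:key-lemma} says that the segment swap at $v$ preserves candidacy exactly because the crossing is spurious. So $\iota(\casting)$ is again a candidate casting.

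\emph{Weight and sign.} Weight preservation of the path product is \Cref{lem:swap-properties}\eqref{itm:swap-weight}. For the sign, the relevant quantity is the full sign in the expansion of \Cref{sec:proof-castings}, namely (formal sign)$\cdot\sgn(\pi)$. The factor $\sgn(\pi)$ flips by \Cref{lem:swap-properties}\eqref{itm:swap-sign}. It remains to show the formal sign is unchanged. The formal sign is $\prod_j(-\varepsilon_j)$ over all ghost pairs, and this depends only on the final state, not on $\pi$, as long as $\pi$ stays a candidate. Hence the total sign reverses.

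\emph{Involution; this is the main obstacle.} We must show that rehearsal on $\casting' = \iota(\casting)$ fails at the same vertex $v$, with the same pair $I,J$, and that the crossing there is still spurious. Then a second swap at $v$ restores $\casting$ by \Cref{lem:swap-properties}\eqref{itm:swap-involution}. The argument proceeds in three steps.

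\begin{enumerate}
\item The swap changes only the suffixes of $P_I$ and $P_J$ strictly after $v$. Every vertex on those suffixes is strictly later than $v$ in the time order, and hence also in the fixed linear extension. So the set of crossings among paths at positions preceding $v$ is identical in $\casting$ and $\casting'$, and so is the set of crossings at $v$ itself, since the same paths pass through $v$.
\item Valid versus spurious status at the earlier crossings is also unchanged. Those crossings involve either particles other than $I,J$, whose roles are untouched, or $I$ and $J$ paired with others. Could a crossing of $I$ with some $K$ before $v$ exist? No: it would have deactivated $I$ before $v$, so $I$ would not be active at $v$. Therefore rehearsal on $\casting'$ makes the same sequence of valid recordings, reaches $v$ with the same active set, and, by the deterministic tie-breaking and the high-indegree consecutive pairing, tests the same pair $(I,J)$ first among any failing pairs at $v$. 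For pairs tested earlier at $v$ not involving $I,J$, status is unchanged.
\item The crossing of $I,J$ remains spurious in $\casting'$, because $\{\pi'(I),\pi'(J)\}=\{\pi(I),\pi(J)\}$ as a set.
\end{enumerate}

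Hence $\iota(\iota(\casting))=\casting$. The delicate points to verify carefully are the high-indegree vertex, where several pairs are tested at $v$ in index order and any earlier pair at $v$ must be unaffected, and the fact that "first crossing" depends only on prefixes up to $v$.
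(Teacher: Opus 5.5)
Your proposal is correct and follows essentially the same route as the paper's proof: well-definedness comes from the swap criterion (\Cref{lem:key-lemma}), the involution property comes from noting that the swap alters only suffixes strictly after $v$, so rehearsal on the swapped casting reaches the same verdicts and halts at the same spurious pair at $v$, and weight preservation and sign reversal come from \Cref{lem:swap-properties}. Your explicit remark that the formal sign equals $\prod_j(-\varepsilon_j)$ for every candidate, so that the full Leibniz sign and not just $\sgn(\pi)$ reverses, is a worthwhile clarification the paper leaves implicit; in step~2 you should say that an earlier crossing of $P_I$ with an \emph{active} path would have either deactivated $I$ or halted the scan, since crossings with already-inactive paths may exist but are ignored by rehearsal.
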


\begin{proof}
\emph{Well-defined on candidate castings.}
On a successful casting, $\iota$ acts as the identity, which
preserves candidacy trivially. On a failed casting,
rehearsal halts at a tested crossing
that is spurious, so the swap criterion
(\Cref{lem:key-lemma}) shows that the segment swap preserves
candidacy: $\iota(\casting)$ is again a candidate casting.

\medskip

\emph{Fixed points.}
By construction, $\iota(\casting) = \casting$ if and only if
rehearsal succeeds on~$\casting$, if and only if $\casting$
is successful (\Cref{def:successful}).

\medskip

\emph{Involution.}
On a successful casting $\iota$ is the identity, so
$\iota^2 = \id$ holds trivially. Suppose instead that
rehearsal on~$\casting$ fails at the spurious crossing of
$P_I$ and~$P_J$ at vertex~$v$, and write
$\casting' = \iota(\casting)$. Note that $v$ is the
\emph{first} crossing of $P_I$ and~$P_J$: both particles are
active when rehearsal reaches~$v$, so an earlier crossing of
their paths would have halted the scan or deactivated them
sooner. Since all valid collisions before~$v$ deactivated
both of their participants, neither involved $I$ or~$J$, and
the swap alters only the suffixes of $P_I$ and~$P_J$
after~$v$: every vertex of a swapped suffix is reachable
from~$v$ and hence follows~$v$ in the linear extension
(\Cref{def:spacetime-graph}). Consequently the two
castings agree on every vertex up to and including~$v$:
no crossing at or before~$v$ appears or disappears, and
since the pre-$v$ collisions never involve $I$ or~$J$
their verdicts are unchanged. Rehearsal on~$\casting'$
therefore processes the same valid collisions in the same
order before selecting the same pair $(I,J)$ at~$v$ (the
tie-breaking rule sees the same candidates). The validity test depends
only on $\{\pi(I), \pi(J)\}$, which equals
$\{\pi'(I), \pi'(J)\}$---the same unordered set. The
crossing is still spurious, so rehearsal on~$\casting'$
fails there, and $\iota(\casting')$ is the segment swap
of~$\casting'$ at~$v$, which restores~$\casting$
(\Cref{lem:swap-properties}\eqref{itm:swap-involution}):
$\iota^2 = \id$.

\medskip

\emph{Weight preservation and sign reversal.}
These follow from \Cref{lem:swap-properties}: the swap preserves
total path weight and reverses permutation sign.
\end{proof}

\subsection{The sign identity}
\label{sec:proof-sign}

\begin{proposition}[Sign identity]\label{prop:sign-identity}
For any successful casting $(\pi, \paths)$:
\[
\sgn(\pi) = (-1)^{\#\{j : \varepsilon_j = +1\}}.
\]
\end{proposition}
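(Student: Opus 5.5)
The idea is to factor $\pi$ into two pieces. One is a permutation recording where the survivors and ghost pairs sit in the source order. The other is a product of transpositions coming from the orientation inside each ghost pair. We then show that the first piece is even.

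\emph{Step 1: the survivors.} In a successful casting, the survivors are never deactivated, and rehearsal found no crossing involving them, since any such crossing would have been spurious. So survivor paths are pairwise non-crossing. By \Cref{prop:order-preservation}, their endpoints are strictly ordered like their sources. Hence the actors mapped to $\Survivors$ are sent to $1,\dots,s$ in increasing order.

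\emph{Step 2: the ghost pairs form a non-interleaving matching of consecutive blocks.} Take the ghost pair $j$ filled by the valid collision of $I<J$. By \Cref{prop:adjacent-crossings}, $I$ and $J$ are adjacent in the active set at that moment; for high-indegree vertices, the consecutive pairing rule gives the same conclusion. So every actor strictly between $I$ and $J$ has already been deactivated. The deactivated actors between them therefore come in complete pairs, and these pairs lie strictly inside $(I,J)$. They cannot straddle $I$ or $J$, because a straddling pair would have had to be adjacent while $I$ or $J$ was active and between them.

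Survivors never lie between $I$ and $J$, since survivors stay active. So the set of actors matched to ghosts splits into a non-crossing (nested) matching, and the survivors sit outside every arc.

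\emph{Step 3: computing the sign.} Define $\sigma$ to be the bijection that agrees with $\pi$ except that, inside each ghost pair, the lower actor goes to $(j,1)$ and the higher to $(j,2)$. Then $\pi=\sigma$ composed with one transposition for each pair with $\varepsilon_j=+1$. This is because, by candidacy (\Cref{def:candidate}), $\pi$ sends the higher actor to $(j,1)$ exactly when $\varepsilon_j=+1$. Hence
\[
\sgn(\pi)=\sgn(\sigma)\,(-1)^{\#\{j:\varepsilon_j=+1\}},
\]
and it remains to show $\sgn(\sigma)=+1$.

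Read $\sigma$ as a word listing, in actor order $1,\dots,n$, the role positions from \Cref{def:role-order}. Move each arc $\{I,J\}$ so that its two entries become contiguous and are placed after all survivors, in the order of the pair numbering.

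The key observation is that every adjacent-block move exchanges a \emph{two-element} block, namely an arc's pair of role values, with other entries. Such an exchange contributes an even number of inversions, because each element crossing the block is counted twice.

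Concretely, count the inversions of $\sigma$ directly, using that the role values of pair $j$ are consecutive integers $s+2j-1, s+2j$. For any actor $K$ not in pair $j$, the relation between $K$'s value and the two values of pair $j$ is the same for both. Nesting means $K$ is either outside the interval $[I,J]$, or inside it. If $K$ is outside, it contributes $0$ or $2$ inversions with the pair. If $K$ is inside, it contributes exactly $1$ inversion with $I$ and $1$ with $J$, or $0$ with both, since $I<K<J$ and the pair's values are adjacent. Either way the contribution is even.

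Within a pair, $\sigma$ has no inversion by construction. Survivors among themselves have no inversions by Step 1. Every cross count is even, so $\sgn(\sigma)=+1$.

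\emph{Main obstacle.} The delicate point is Step 2. I must make rigorous that adjacency in the active set at the moment of collision, iterated over the temporal order, really produces a nested matching with no survivor inside any arc. I would prove this by induction on the rehearsal steps, maintaining the invariant that the inactive actors form complete, nested pairs, each lying in a gap between consecutive active actors.

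The parity count in Step 3 actually only needs pair-wise consecutive role values and the fact that survivors are in order, so nesting may not even be essential there. I would double-check this first, as it could simplify the argument.
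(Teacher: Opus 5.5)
\textbf{Where Step~3 fails.} Your strategy is the paper's. You peel off one transposition per ghost pair with $\varepsilon_j=+1$. You then show the remaining inversions are even, using three facts: survivors are in order (your Step~1), the ghost actors form a non-crossing matching, and no survivor lies inside an arc. Steps~1 and~2 are correct. The gap is in the direct inversion count of Step~3. Take $I<K<J$ with $\{I,J\}$ the actors of pair $j$, so $\sigma(I)=s+2j-1$ and $\sigma(J)=s+2j$. If $\sigma(K)$ lies below both values, then $(I,K)$ is an inversion and $(K,J)$ is not. If it lies above both, it is the other way round. So an actor inside an arc contributes exactly \emph{one} inversion against that arc, never $0$ or $2$. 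The claim ``every cross count is even'' is therefore false, and as written Step~3 does not show $\sgn(\sigma)=+1$. Notice that Step~3 never actually uses nesting: the outside/inside dichotomy you invoke holds for any $K\notin\{I,J\}$. That is why you were led to suspect nesting is inessential.

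\textbf{How to repair it.} The missing idea is exactly what Step~2 supplies. The actors strictly inside an arc are ghost actors forming complete pairs, so there are evenly many of them, and their single inversions sum to an even number. Organize the count by pairs of arcs, as the paper does, so that each cross inversion is counted once:
\begin{itemize}
\item an arc nested in another contributes $1+1=2$ inversions against it;
\item two disjoint arcs contribute $0$ or $4$;
\item an arc against a survivor contributes $0$ or $2$.
\end{itemize}
The same issue hides in your block-move sketch. The two entries of an arc are separated by the actors it encloses, so the moves of two-element blocks are available only if arcs are extracted innermost-first, which again uses nesting. Your closing speculation should be dropped, because nesting is indispensable. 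For interleaved arcs $\{1,3\},\{2,4\}$ with $s=0$, $\sigma$ is the word $(1,3,2,4)$, which is odd. For $n=3$ with arc $\{1,3\}$ and survivor~$2$, $\sigma=(2,1,3)$, also odd.
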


\begin{proof}
Each collision fills one ghost pair. Consider the collision
filling ghost pair~$j$: particles $I < J$ annihilate. They are
adjacent in the active set at the time of the collision: the
active particles at the collision vertex form a contiguous
block of the active set (\Cref{prop:adjacent-crossings}), and
the processed pairs are consecutive in the block
(\Cref{sec:proof-rehearsal}).

The roles $(j,1)$ and $(j,2)$ are consecutive in the role
order (\Cref{def:role-order}), and the actors have $I < J$.
By the far-side principle:
\begin{itemize}
\item When $\varepsilon_j = +1$: $\pi(J) = (j,1)$ and
  $\pi(I) = (j,2)$. The smaller actor maps to the larger role
  and vice versa: one inversion, contributing $-1$.
\item When $\varepsilon_j = -1$: $\pi(I) = (j,1)$ and
  $\pi(J) = (j,2)$. Order is preserved: no inversion,
  contributing $+1$.
\end{itemize}
It remains to check that all other inversions of~$\pi$
(pairs of actors whose index order disagrees with the role
order of their assigned roles, \Cref{def:role-order}) cancel
in pairs, so that the within-pair contributions above determine
the sign.

\smallskip\noindent
\emph{Two ghost pairs.} Let $\{I, J\}$ and $\{I', J'\}$ be
the actor pairs of two distinct ghost pairs. The two index
pairs are non-crossing: at the earlier of the two
collisions, both members of the other pair are still
active, and adjacency in the active set excludes an
interleaved pattern $I < I' < J < J'$. (Simultaneous
collisions at a high-indegree vertex are decomposed into
consecutive binary annihilations,
\Cref{sec:proof-rehearsal}, and are likewise
non-crossing.) In the role order (\Cref{def:role-order})
all roles of one ghost pair precede all
roles of the other, so the four cross comparisons agree in
direction when the index pairs are disjoint (contributing
$0$ or~$4$ inversions) and split two against two when they
are nested (contributing exactly~$2$). Either way the
count is even.

\smallskip\noindent
\emph{A ghost pair and a survivor.} Let $\{I, J\}$ with
$I < J$ collide and let $S$ be a surviving actor. Survivors
never deactivate, so no survivor index lies strictly
between $I$ and~$J$ (otherwise $I$ and~$J$ would not be
adjacent in the active set): either $S < I$ or $S > J$.
Survivor slots precede ghost slots, so the two comparisons
of $S$ against $I$ and~$J$ contribute $0$ inversions if
$S < I$ and $2$ if $S > J$---again even.

\smallskip\noindent
\emph{Two survivors.} In a successful casting the survivor
paths are pairwise non-crossing (a crossing between two
active survivor paths would be spurious), so by
\Cref{prop:order-preservation} the surviving actors map to
survivor slots in spatial order: no inversions.

\smallskip
The parity of the total inversion count therefore equals
the parity of the number of within-pair inversions, one for
each ghost pair with $\varepsilon_j = +1$:
$\sgn(\pi) = (-1)^{\#\{j : \varepsilon_j = +1\}}$.
\end{proof}

\subsection{Completing the proof}
\label{sec:proof-completion}

\begin{proof}[Proof of \Cref{thm:annihilation}]
The evaluation of the formal variables~\eqref{eq:formal}
annihilates all non-candidate terms of the Leibniz expansion,
leaving the candidate bijections. By \Cref{thm:involution},
the involution $\iota$
partitions candidate castings into:
\begin{itemize}
\item \emph{Fixed points}: successful castings;
\item \emph{Matched pairs}: failed castings paired by segment swap.
\end{itemize}
Each matched pair consists of castings $(\pi, \paths)$ and
$(\pi', \paths')$ with equal path products
$\prod_I w(P_I) = \prod_I w(P_I')$
(\Cref{lem:swap-properties}\eqref{itm:swap-weight})
and $\sgn(\pi') = -\sgn(\pi)$
(\Cref{lem:swap-properties}\eqref{itm:swap-sign}). The
casting weights are therefore equal and their
contributions to the determinant cancel.

Only successful castings survive. By \Cref{prop:bijection}, these
biject with performances. We verify that each contributes
$+1 \cdot w(\casting)$ to $Z$:

\medskip

\emph{Weight:} For a successful casting $(\pi, \paths)$, the
path product $\prod_I w(P_I)$ decomposes into the same edges
as in the corresponding performance: collision diagram edges
(both incoming segments to collision vertices and survivor
paths), plus the ghost paths $\Gamma_{j,1}$, $\Gamma_{j,2}$
for each pair~$j$. The bijection is therefore
weight-preserving.

\medskip

\emph{Sign cancellation:} Each ghost pair~$j$ contributes two
sign factors. First, the Leibniz expansion contributes a factor
to $\sgn(\pi)$; by \Cref{prop:sign-identity}, this factor is
$-1$ when $\varepsilon_j = +1$ and $+1$ when $\varepsilon_j = -1$.
Second, the formal variable relations (\Cref{eq:formal}) contribute
the sign $-\varepsilon_j$: candidacy with $\varepsilon_j = +1$ requires
$\pi^{-1}(j,1) > \pi^{-1}(j,2)$, producing a factor
of $-1$; candidacy with $\varepsilon_j = -1$ requires
$\pi^{-1}(j,1) < \pi^{-1}(j,2)$, producing a factor
of $+1$. In both cases, the two factors are identical and their
product is $(-1)^2 = +1$ or $(+1)^2 = +1$. Thus the net sign
contribution from each ghost pair is $+1$, and the total
contribution of each successful casting is $+1 \cdot w(\casting)$.

\medskip

\emph{Conclusion:} By \Cref{prop:bijection}, successful
castings and performances are in one-to-one correspondence.
Since both have weight $\prod_I w(P_I)$, the
bijection preserves weight:
\[
\det(M)
= \sum_{\text{successful}} w(\casting)
= \sum_{\perf} w(\perf) = Z. 
\qedhere
\]
\end{proof}

\section{Continuous processes}\label{sec:continuous}

The annihilation formula (\Cref{thm:annihilation}) was proved
for discrete spacetime graphs. The proof, however, is purely
combinatorial: candidacy, segment swap, and the sign-reversing
involution use only the order structure of the trajectories,
never the discreteness of space or time. The companion
paper~\cite[Section~\emph{Continuous
processes}]{SU2026coalescence} develops a transfer scheme that
converts exactly this kind of discrete argument into a
measure-theoretic statement about continuous processes,
carrying the coalescence formula to Brownian motion and to
birth-death chains. This section states the continuous version
of the annihilation formula and gives the dictionary under
which the same transfer scheme applies to the annihilation
involution; the measure-theoretic construction itself is not
repeated here.

\subsection{Karlin--McGregor assumptions}
\label{sec:continuous-km}

Let $\Omega$ denote the probability space of $n$
non-interacting particles with initial positions
$x_1 \leq \cdots \leq x_n$, and write
$\mathbf{X}_T(\omega) = (X_T^1(\omega), \ldots, X_T^n(\omega))$
for the final positions. The \emph{Karlin--McGregor
assumptions}~\cite{KM1959} are:
\begin{enumerate}[label=\textup{(KM\arabic*)},
                  ref=\textup{KM\arabic*}, leftmargin=*]
\item \label{itm:km-markov}\emph{Strong Markov property}: the
  $n$-particle system is strong Markov with respect to its
  joint filtration;
\item \label{itm:km-identical}\emph{Identical, independent
  dynamics}: the particles are independent ($\PP$ is the
  product law), each following the same Markov process,
  differing only in initial position;
\item \label{itm:km-order}\emph{Order preservation}: adjacent
  particles cannot change their relative order without first
  occupying the same state;
\item \label{itm:km-meeting}\emph{Meeting times are stopping
  times}: for particles $I < J$, the first meeting time
  $\tau_{I,J} = \inf\{t : X^I_t = X^J_t\}$ is a stopping
  time.
\end{enumerate}
These assumptions hold for Brownian motion and other
continuous-path diffusions, and for skip-free birth-death
chains; see~\cite[Section~\emph{Continuous
processes}]{SU2026coalescence} for a discussion, including the
joint strong Markov property, which holds whenever the
$n$ particles are independent Feller processes.

\subsection{The finite annihilating system}
\label{sec:continuous-system}

In continuous time there is no smallest time step at which to
resolve a collision, so the annihilating system itself calls
for a construction; as in the companion paper, \emph{retention}
makes it immediate. Run the $n$ trajectories of~$\Omega$. At
the first instant at which two active particles occupy a
common state---by order preservation~(\ref{itm:km-order})
necessarily two \emph{adjacent} active particles---both
retire, and their two continuing trajectories become the ghost
pair born at that collision: no path is ever stopped. Should
three or more particles meet at a single instant, the meeting
is resolved into pairwise annihilations exactly as at a
collision vertex with several arriving paths in the discrete
model (\Cref{def:collision-diagram}). Each collision retires
two active particles, so at most $\lfloor n/2 \rfloor$
collisions occur; meeting times are stopping
times~(\ref{itm:km-meeting}), and the strong Markov
property~(\ref{itm:km-markov}) makes the evolution after each
collision a fresh instance of the same dynamics, so the
recursion is well defined.

All $n$ trajectories survive intact, and annihilation is a
relabeling of which coordinates are read as survivors and
which as ghosts. Consequently the annihilating law
$\PP_{\mathrm{int}}$ is the pushforward of the non-interacting
product law~$\PP$ under this deterministic relabeling,
followed by the independent uniform numbering of the ghost
pairs (\Cref{def:performance}); its existence and
measurability reduce to those of the coalescing law in the
companion paper~\cite[Section~\emph{Continuous
processes}]{SU2026coalescence}, the relabeling being a
measurable functional of the almost surely finite collision
structure.

\subsection{The continuous annihilation formula}
\label{sec:continuous-formula}

The final state of the annihilating system consists of the
number~$k$ of annihilations, the survivor positions, and the
positions of the numbered ghost pairs. As in the discrete
setting, the two slots of a ghost pair are pure bookkeeping:
for distinct positions, the ghost sign
(\Cref{def:ghost-sign}) determines which position occupies
slot~$(j,1)$. Accordingly, events are described by sets of
final positions compatible with one sign vector at a time.

\begin{definition}[$\varepsilon$-admissible sets]
\label{def:admissible-set}
Fix the number~$k$ of ghost pairs, hence the role
set~$\Roles$, and a sign vector
$\varepsilon \in \{+1, -1\}^k$. An
\emph{$\varepsilon$-admissible set} is a measurable set
$A \subseteq \RR^{\Roles}$ of final positions such that every
$(y_f)_{f \in \Roles} \in A$ satisfies:
\begin{itemize}
\item \emph{ghost-pair signs:} for each pair~$j$,
  $a_j \leq b_j$ when $\varepsilon_j = +1$, and $b_j < a_j$
  when $\varepsilon_j = -1$;
\item \emph{survivor order:} $y_1 \leq \cdots \leq y_s$.
\end{itemize}
As in \Cref{def:ghost-sign}, the weak inequality assigns a tie
$a_j = b_j$ to $\varepsilon_j = +1$.
\end{definition}

The survivor-order condition is no restriction: order
preservation~(\ref{itm:km-order}) keeps the surviving
particles in this order (\Cref{rem:formula-edge-cases}).

\begin{theorem}[Continuous annihilation formula]
\label{thm:continuous-annihilation}
Let the underlying process satisfy the Karlin--McGregor
assumptions
\textup{(\ref{itm:km-markov})}--\textup{(\ref{itm:km-meeting})}.
For any $\varepsilon$-admissible set~$A$:
\[
\PP_{\mathrm{int}}
  \bigl(\text{$k$ annihilations, final positions} \in A\bigr)
= \frac{1}{k!}
  \prod_{j=1}^{k} (-\varepsilon_j)
  \sum_{\pi \in \Candidates}
  \sgn(\pi)\,
  \PP\bigl(\mathbf{X}_T \in A_\pi\bigr),
\]
where $\Candidates$ is the set of candidate bijections
(\Cref{def:candidate}), $\PP$ is the law of the $n$
non-interacting particles, and
\[
A_\pi = \bigl\{(y_{\pi(1)}, \ldots, y_{\pi(n)}) :
  (y_1, \ldots, y_n) \in A\bigr\}
\]
is the set~$A$ with coordinates permuted by~$\pi$.
\end{theorem}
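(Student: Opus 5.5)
The plan is to transfer the combinatorial proof of \Cref{thm:annihilation} to the path space~$\Omega$ of $n$ non-interacting particles, with the measure-theoretic reading of the companion paper~\cite{SU2026coalescence}. The dictionary runs as follows. A casting becomes a pair $(\pi,\omega)$ with $\pi\in\Candidates$ and $\omega\in\Omega$ such that $\mathbf{X}_T(\omega)\in A_\pi$; trajectory~$I$ then ends at $y_{\pi(I)}$. A crossing becomes a meeting time, the first crossing among active particles becomes the first meeting time among active coordinates, and segment swap at~$v$ becomes exchange of the post-meeting trajectories of $I$ and~$J$ at the stopping time $\tau_{I,J}$. The rehearsal algorithm and the involution~$\iota$ are defined verbatim on such pairs. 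Since there are at most $\lfloor n/2\rfloor$ collisions, rehearsal consists of finitely many measurable steps, so the sets of successful and failed pairs are measurable.

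\textbf{Step 1 (the right-hand side as a signed measure of castings).} I will write $\sum_{\pi}\sgn(\pi)\PP(\mathbf{X}_T\in A_\pi)$ as the signed total mass of the measure $\sgn(\pi)\,\PP$ on the set of castings $(\pi,\omega)$. The prefactor $\prod_j(-\varepsilon_j)$ is the formal-variable sign of~\eqref{eq:formal}. Here $\varepsilon$-admissibility is what fixes a single sign vector, so that the candidate set $\Candidates$ is the same for every point of~$A$.

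\textbf{Step 2 (the involution preserves measure).} This is the main obstacle. In the discrete setting, weight preservation (\Cref{lem:swap-properties}) is a bijection of edges. In the continuous setting I must show that the swap map $\omega\mapsto\omega'$, which exchanges coordinates $I$ and~$J$ after $\tau_{I,J}$, preserves~$\PP$ on the failed set. The tool is the strong Markov property~(\ref{itm:km-markov}) at $\tau_{I,J}$, which is a stopping time by~(\ref{itm:km-meeting}), combined with identical dynamics~(\ref{itm:km-identical}). At that time $X^I=X^J$, so the law of the future is exchangeable in the two coordinates. Because the rehearsal scan reaches the failing crossing at a stopping time built from earlier meeting times, the failing time is itself a stopping time, and the exchange is measure preserving. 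This is exactly the content of the companion transfer scheme, and I would cite it rather than redo it.

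The combinatorial inputs carry over unchanged. Order preservation~(\ref{itm:km-order}) replaces \Cref{prop:order-preservation,prop:adjacent-crossings}, and with them \Cref{lem:key-lemma}, \Cref{lem:crossings-exist} (so there is no stall), and \Cref{thm:involution}. Failed pairs therefore cancel: $(\pi,\omega)$ and $((I\,J)\circ\pi,\omega')$ carry opposite signs and equal mass.

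\textbf{Step 3 (the successful part).} The successful castings remain. By \Cref{prop:sign-identity}, each contributes $\sgn(\pi)=\prod_j(-\varepsilon_j)$, so the total becomes $\PP$ of the successful set. By the continuous analogue of \Cref{prop:bijection}, rehearsal is precisely the deterministic relabeling of \Cref{sec:continuous-system}. Hence the successful castings correspond to outcomes $\omega$ whose annihilating relabeling has $k$ annihilations and final positions in~$A$, for one fixed numbering of the pairs. The independent uniform numbering then supplies the factor $1/k!$, and the pushforward description of $\PP_{\mathrm{int}}$ gives the stated identity.

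Simultaneous triple meetings are handled by the same pairing convention as in the discrete case, and they affect only how the relabeling is defined, not the measure-preservation argument.
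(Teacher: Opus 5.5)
Your proposal follows essentially the paper's own argument: the companion paper's transfer scheme run on the annihilation involution, with the same dictionary. That is, castings become pairs $(\pi,\omega)$; the segment swap at the first spurious meeting is measure-preserving by \textup{(\ref{itm:km-markov})} and \textup{(\ref{itm:km-identical})}; failed castings cancel; and the successful ones carry sign $+1$ and the factor $1/k!$. One point that you, like the paper, leave implicit: in a successful casting the two actors $I<J$ of each ghost pair end in reversed order, $y_{\pi(J)}\preceq y_{\pi(I)}$ (\Cref{lem:no-ghosts-order}), whereas the retained trajectories of the annihilating system may end in either order. Matching the successful set with the $\PP_{\mathrm{int}}$-event is therefore not literally ``precisely the relabeling''; for some ghost pairs it needs the continuous far-side principle, an exchange of post-meeting segments, which is measure-preserving by the same strong Markov argument as your Step~2.
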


\noindent
The right-hand side is the Leibniz expansion of
$\det(M)$ from \Cref{thm:annihilation}, divided by the $k!$ of
the uniform ghost-pair numbering and read as a
statement about measures: the candidate set and the signs
depend only on~$\varepsilon$ and~$\pi$
(\Cref{sec:proof-castings}), never on the positions in~$A$, so
the sum is a well-defined finite signed combination of
non-interacting probabilities. On a discrete state space,
summing the discrete formula over the final states in~$A$
recovers exactly this identity.

\begin{remark}[Scope]\label{rem:continuous-scope}
The theorem is stated for one number of
annihilations~$k$ and one sign vector~$\varepsilon$ at a time.
An arbitrary event on the final state is recovered by
partitioning it according to $k$ and~$\varepsilon$---on the
almost surely full region where the survivors are sorted
(\Cref{def:admissible-set})---and summing, so this one identity
determines the entire law of the final state of the
annihilating system. In particular, the
complete-annihilation probabilities behind the Pfaffian
formulas of \Cref{sec:pfaffian} are instances with $s = 0$.
\end{remark}

\subsection{Reduction to the discrete proof}
\label{sec:continuous-proof}

The proof of \Cref{thm:continuous-annihilation} is the
transfer scheme of~\cite[Section~\emph{Continuous
processes}]{SU2026coalescence}, run on the annihilation
involution of \Cref{sec:proof} in place of the coalescence
involution. We give the dictionary and refer to the companion
paper for the measure-theoretic details, which apply verbatim.

\begin{enumerate}[label=(\roman*)]
\item \emph{Lazy rehearsal.} Rehearsal
  (\Cref{sec:proof-rehearsal}) sweeps the spacetime vertices
  in a linear order, acting only at crossings; its lazy form
  jumps directly to the first meeting of two active adjacent
  representatives. The lazy form enumerates no vertices, so
  it survives the passage to continuous spacetime unchanged.
\item \emph{Finitely many combinatorial cells.} Each
  annihilation retires two active representatives, so
  rehearsal makes at most $\lfloor n/2 \rfloor$ decisions.
  Grouping the pairs $(\pi, \omega)$ of a candidate bijection
  and an outcome by the combinatorial type of the run---which
  adjacent pairs meet, in which order, with which
  verdicts---partitions the casting space into finitely many
  cells.
\item \emph{The discrete identity, cell by cell.} On each
  cell the candidate~$\pi$, the verdicts, and the sign
  $\sgn(\pi) \prod_j (-\varepsilon_j)$ are constant. The
  discrete argument
  (\Cref{sec:proof-castings,sec:proof-completion}) uses only
  these order-theoretic data, never the numerical positions,
  so it applies to each cell unchanged.
\item \emph{Measurability.} The decisions of lazy rehearsal
  form a chain of stopping times: the first meeting of two
  active adjacent representatives is a minimum of meeting
  times~(\ref{itm:km-meeting}) over a finite measurable
  family, and the strong Markov
  property~(\ref{itm:km-markov}) restarts the system after
  each decision. This is word for word the stopping-time
  bookkeeping of the companion paper.
\item \emph{The measure-preserving swap.} The involution acts
  by segment swap at the first spurious crossing, where the
  two representatives occupy a common state. Independent,
  identical dynamics~(\ref{itm:km-identical}) make the
  post-meeting segments exchangeable, and the strong Markov
  property~(\ref{itm:km-markov}) detaches them from the past,
  so the swap preserves the measure while reversing
  $\sgn(\pi)$. Failed cells cancel in pairs; only successful
  castings remain, in bijection with the performances of the
  annihilating system, each contributing with the positive
  sign (\Cref{sec:proof-completion}).
\end{enumerate}

Summing the surviving contributions cell by cell yields
\Cref{thm:continuous-annihilation}, exactly as the companion
paper's transfer yields the continuous coalescence formula.

\section{Pairwise coalescence and Pfaffians}\label{sec:pfaffian}

This section proves that the total weight of \emph{pairwise
coalescence} is a Pfaffian of pairwise quantities, by converting the
coalescence problem to a complete annihilation problem. The
annihilation formula (\Cref{thm:annihilation})---developed for
$A + A \to \emptyset$---thus becomes a tool for proving a
result about $A + A \to A$. The conversion is the cancellative
labeling, which reinterprets coalescence events as annihilations.

\subsection{The coalescence model}
\label{sec:pfaffian-coalescence}

Before deriving the Pfaffian formula, we introduce the coalescence model
from the companion paper~\cite{SU2026coalescence}. We provide
only the definitions necessary for the cancellative labeling.

\subsubsection{Coalescence dynamics}

In the coalescence model ($A + A \to A$), when two particles collide
one \emph{heir} and one \emph{ghost} emerge: the heir continues as a
visible particle while the ghost drifts as an invisible walker that no
longer interacts with anything. This contrasts with annihilation
($A + A \to \emptyset$), where both are destroyed and a ghost pair
emerges.

\begin{definition}[Multiplicity]\label{def:multiplicity}
Each entity carries a \emph{multiplicity}: the number of original
particles it represents. An initial particle has multiplicity~$1$.
At each coalescence, the heir has the sum of incoming multiplicities
while the ghost has multiplicity~$0$.
\end{definition}

\begin{example}
If a particle with multiplicity~$3$ (arising from earlier mergers)
coalesces with a particle with multiplicity~$2$, the heir has
multiplicity~$5$ and the ghost has multiplicity~$0$.
\end{example}

\subsubsection{Coalescence performance}

A \emph{coalescence performance} specifies:
\begin{itemize}
\item A genealogy forest: which particles merged and where;
\item Ghost paths: where each ghost traveled after emerging.
\end{itemize}
The weight of a performance is the product of all path weights.
(Unlike annihilation, coalescence ghosts are individually
identifiable---each emerges from a specific junction---so no
random labeling factor is needed.)
See~\cite{SU2026coalescence} for the full treatment, including
the coalescence formula and the coalescence determinant.

\subsection{The pairwise coalescence problem}
\label{sec:pfaffian-background}

Given $n = 2k$ particles at initial positions 
$x_1 \preceq \cdots \preceq x_n$, we ask:
what is the total weight (or probability) of the \emph{pairwise coalescence}
event---the event that by time~$T$, each consecutive pair
$\{x_1, x_2\}$, $\{x_3, x_4\}$, $\ldots$, $\{x_{n-1}, x_n\}$ has
coalesced, meaning that the particles within each pair have
merged (directly or through a chain of intermediate collisions)
into a single heir? (Different pairs may merge separately
or may share collisions.)

Computing this from a coalescence formula would require summing over
all coalescence scenarios consistent with the required pairings---different
coalescence trees and ghost configurations. We avoid this difficulty by
converting the problem to annihilation.

\subsection{The coalescence-to-annihilation map}
\label{sec:pfaffian-map}

The classical connection between coalescence and
annihilation via parity---assigning independent random
labels and propagating by symmetric difference---goes
back to Griffeath's cancellative
duality~\cite{Griffeath1979} and was made explicit at
the particle level by ben-Avraham and
Brunet~\cite{benAvrahamBrunet2005} (see also Athreya
and Swart~\cite{AthreyaSwart2012} for a rigorous
treatment). In the ghost framework, the conversion is
deterministic: each entity carries the parity of its
multiplicity, and under this \emph{cancellative labeling},
pairwise coalescence becomes complete annihilation.

\subsubsection{Parity of an entity}

Define the \emph{parity} of an entity as the parity of its
multiplicity (\Cref{def:multiplicity}):
\emph{odd} (representing an odd number of original particles) or
\emph{even} (representing an even number, including zero for ghosts).

\subsubsection{Characterizing pairwise coalescence}

The event ``all $k$ consecutive pairs have coalesced'' is equivalent to
``every final heir has even multiplicity.'' For the forward direction:
each required pair contributes two original particles to the heir that
absorbed it; if all pairs are absorbed, every heir's multiplicity is
even. For the converse, suppose every final heir has even
multiplicity. By the consecutive collision property
(\Cref{def:planar}) particles merge only with neighbors, so each
heir represents a contiguous block of original indices, and even
multiplicity makes every block even in length. The block
boundaries therefore fall at even positions, so no boundary
separates a consecutive pair $\{2i-1, 2i\}$: each such pair lies
in a single heir and has coalesced.

\subsubsection{The cancellative labeling}

Given a coalescence performance where every final heir has even
multiplicity, reclassify each entity as follows:
\begin{itemize}
\item every entity with even multiplicity is reclassified as
  a \emph{ghost};
\item every entity with odd multiplicity remains a \emph{particle}.
\end{itemize}

At each binary coalescence step, the three cases are:

\medskip
\begin{center}
\small
\begin{tabular}{lll}
\hline
\textbf{Incoming parities}
  & \textbf{Outgoing parities}
  & \textbf{Reclassified as} \\
\hline
odd + odd
  & even + zero
  & ghost + ghost (annihilation) \\
odd + even
  & odd + zero
  & particle + ghost (non-event) \\
even + even
  & even + zero
  & ghost + ghost (non-event) \\
\hline
\end{tabular}
\end{center}
\medskip

In the first case, two odd-parity entities (particles in the annihilation
picture) produce two ghosts: this is an annihilation event. In the other
cases, at least one incoming entity has even parity (already a ghost);
the meeting is a non-event. In the third case, both incoming
entities already have even parity (both are ghosts in the
reclassified picture), so their meeting is likewise a
non-event: ghosts pass through each other without interacting.

Since every final heir has even multiplicity, every final heir is
reclassified as a ghost. The result is \emph{complete annihilation}:
zero survivors and $n = 2k$ ghosts. The odd+odd meetings supply
exactly $k$ twin pairs, each born at a collision vertex, so the
reclassified trajectories form a genuine complete-annihilation
performance (\Cref{def:performance}) with $k$ ghost pairs, to
which \Cref{thm:annihilation} applies.

\subsection{The Pfaffian formula}
\label{sec:pfaffian-formula}

We now apply the annihilation formula to derive the Pfaffian structure.

\subsubsection{The antisymmetric matrix}

For particles $I < J$ (with initial positions $x_I \preceq x_J$),
the \emph{pairwise annihilation weight} $A_{IJ}$ is the total
weight of the pairs of trajectories---one from~$x_I$, one
from~$x_J$---that meet. Equivalently (\Cref{sec:intro}), it is
the weight of the event that $I$ and~$J$ annihilate when they
are the only two particles present. It evaluates to
\begin{equation}\label{eq:pairwise}
  A_{IJ} = 2\sum_{\substack{a, b \\ a \prec b}}
  W(x_I \to b) \cdot W(x_J \to a)
  \;+\; \sum_{c}
  W(x_I \to c) \cdot W(x_J \to c),
\end{equation}
where both sums are over final vertices. Two trajectories with
swapped endpoints---particle~$I$, starting on the left, ending
strictly right of particle~$J$---must cross by planarity, and
the first sum collects these; the factor~$2$ accounts for the
crossing pairs whose endpoints are \emph{not} swapped, which
the segment swap beyond the first meeting places in
weight-preserving bijection with the swapped ones. The second
sum is the diagonal correction for the pairs that meet at a
common final vertex~$c$. For continuous state spaces such as
Brownian motion, coincident endpoints have measure zero and
this second sum vanishes; the sums become integrals, and the
results of this section rest on
\Cref{thm:continuous-annihilation}.

Extend to an $n \times n$ antisymmetric matrix by setting
$A_{JI} = -A_{IJ}$ for $I < J$ and $A_{II} = 0$.

Recall the Pfaffian of an antisymmetric
$2k \times 2k$ matrix~$A$:
\begin{equation}\label{eq:pfaffian-def}
\Pf(A) = \sum_{\mu} \sgn(p_1, q_1, \ldots, p_k, q_k)
\prod_{l=1}^{k} A_{p_l q_l},
\end{equation}
where the sum runs over perfect matchings
$\mu = \{\{p_1, q_1\}, \ldots, \{p_k, q_k\}\}$ of
$\{1, \ldots, n\}$, normalized by $p_l < q_l$ and
$p_1 < \cdots < p_k$, and the sign is that of the listed
sequence viewed as a permutation of $(1, \ldots, n)$.

Only the entries above the diagonal enter the
sum~\eqref{eq:pfaffian-def}, but the antisymmetric
extension ties the combinatorial definition to the
classical viewpoint, in which the Pfaffian of an
antisymmetric matrix is a square root of its
determinant: $\Pf(A)^2 = \det(A)$. In particular,
\Cref{thm:pfaffian} below exhibits the total weight of
pairwise coalescence as a square root of a determinant
of pairwise annihilation weights, $w^2 = \det(A)$; the
Pfaffian point processes of \Cref{sec:pfaffian-discussion}
are likewise built from antisymmetric kernels.

\subsubsection{Statement and proof}

\begin{theorem}[Pfaffian pairwise coalescence formula]
\label{thm:pfaffian}
For $n = 2k$ particles at initial positions
$x_1 \preceq \cdots \preceq x_n$, the total
weight of the pairwise coalescence event is
\begin{equation}\label{eq:pfaffian}
  w = \Pf(A).
\end{equation}
\end{theorem}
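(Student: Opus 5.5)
The plan is to reduce the pairwise coalescence weight to a total complete-annihilation weight, and then to show that summing the determinant formula over all ghost positions yields a Pfaffian.

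\emph{Step 1 (reduction).} The cancellative labeling of \Cref{sec:pfaffian-map} sends every coalescence performance in which all heirs have even multiplicity to a complete-annihilation performance with $k$ ghost pairs. I would first check that this map, weighted appropriately, is a bijection onto complete-annihilation evolutions. The underlying $n$ trajectories are the same in both pictures, since nothing is ever stopped; only the reading changes. Conversely, a complete annihilation of the same $n$ trajectories determines the coalescence genealogy, because collisions involving ghosts are non-events. Summing over all final ghost positions and over the $k!$ numberings, I would then obtain
\[
w \;=\; \frac{1}{k!}\sum_{\FinalState:\, s=0} Z_{\FinalState}
   \;=\; \frac{1}{k!}\sum_{\FinalState:\, s=0}\det(M_{\FinalState}),
\]
using \Cref{thm:annihilation}. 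This is exactly the probability-regime normalization, so I need to be careful that the $1/k!$ from numberings matches.

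\emph{Step 2 (Leibniz grouped by matchings).} With $s=0$, every column of $M$ is a ghost column. I would expand $\det(M)$ over candidate bijections $\pi$. Each such $\pi$ determines a perfect matching $\mu=\{\{\pi^{-1}(j,1),\pi^{-1}(j,2)\}\}_j$ of $\Actors$ together with an ordering of its blocks, and the slot within each pair is forced by candidacy. The sign $\sgn(\pi)\prod_j(-\varepsilon_j)$ factorizes: it equals the Pfaffian sign $\sgn(p_1,q_1,\dots,p_k,q_k)$ times, for each pair, the local sign of placing $\{p,q\}$ into slots $(j,1),(j,2)$. Reordering the blocks costs even permutations, which explains why the $k!$ numberings each contribute equally. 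Since $(a_j,b_j)$ range independently over pairs of final vertices, the sum over positions factorizes pair by pair. For the block $\{p<q\}$ assigned to pair $j$, the summed contribution is
\[
\sum_{a\preceq b}(-1)(-1)W(x_q\to a)W(x_p\to b)+\sum_{b\prec a}(+1)(+1)W(x_p\to a)W(x_q\to b).
\]
Here the case $\varepsilon_j=+1$ puts $q$ in slot $(j,1)$ with inversion sign $-1$ and formal sign $-1$, and the case $\varepsilon_j=-1$ puts $p$ in slot $(j,1)$ with both signs $+1$. Relabeling the second sum, the whole expression becomes $2\sum_{a\prec b}W(x_q\to a)W(x_p\to b)+\sum_c W(x_p\to c)W(x_q\to c)$, which is exactly $A_{pq}$ by \eqref{eq:pairwise}. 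Summing over the $k!$ block orders cancels the $1/k!$, and summing over matchings gives $\Pf(A)$ by \eqref{eq:pfaffian-def}.

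\emph{Main obstacle.} I expect the hardest part to be the sign bookkeeping in Step 2. The task is to confirm that $\sgn(\pi)$, read through the role order of \Cref{def:role-order}, equals the matching sign times the within-pair inversion indicator, independently of how the blocks are numbered. Because each block occupies two consecutive roles, permuting blocks permutes pairs of columns and is therefore even; this is the key observation to verify carefully.

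A secondary point is to justify in Step 1 that the weights agree without extra multiplicity factors. This includes high-indegree vertices, where I would rely on the anchor-numbering count of \Cref{rem:multipair-vertex}, and the identification of $A_{IJ}$ with the two-particle annihilation weight, which is the $n=2$ case of the same computation.
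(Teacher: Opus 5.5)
Your proposal is correct and follows the paper's proof essentially step for step: the reduction to complete annihilation via the cancellative labeling, the identity $k!\,w=\sum_{\FinalState}\det(M_{\FinalState})$, grouping the surviving Leibniz terms by perfect matching, cancelling each within-pair inversion against the formal factor $-\varepsilon_j$ to leave the Pfaffian sign, and the blockwise factorization that produces $A_{pq}$. One small point to tighten: the edge weights live in an arbitrary commutative ring, where $k!$ need not be invertible, so do not divide by $k!$; instead derive $k!\,w=k!\,\Pf(A)$ and cancel $k!$ by taking the edge weights as indeterminates over $\ZZ$ and specializing, as the paper does.
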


\noindent
No factor $k!$ appears: the pairwise coalescence event is an
event of the coalescence model, whose performances carry no
ghost-pair numbering; the $k!$ of the annihilation side
arises, and cancels, inside the proof. In the probability
regime, $w$ is the probability of the event.

\begin{proof}
By the coalescence-to-annihilation map (\Cref{sec:pfaffian-map}), the
pairwise coalescence weight~$w$ equals the total weight of complete
annihilation of $n = 2k$ particles. The performances of
\Cref{def:performance} carry a ghost-pair numbering that the
coalescence side does not see: each unnumbered outcome is counted
once per numbering, $k!$ times in total. Hence
\[
k! \cdot w
= \sum_{\FinalState} Z(\FinalState)
= \sum_{\FinalState} \det(M_{\FinalState}),
\]
where the sum runs over all numbered final states: all ghost
positions and the resulting ghost sign patterns. We evaluate the
right-hand side by the annihilation formula
(\Cref{thm:annihilation}) with zero survivors and $k$ ghost pairs.

\smallskip\noindent
\emph{Grouping by matchings.}
Fix a final state and expand $\det(M)$ via the Leibniz formula.
With zero survivors, every bijection
$\pi\colon \Actors \to \Ghosts$ splits the actors into $k$
pairs---the two actors sharing a ghost pair---so the terms that
survive the evaluation of the formal variables group by the
induced perfect
matching $\mu = \{\{p_1, q_1\}, \ldots, \{p_k, q_k\}\}$ of
$\{1, \ldots, n\}$, normalized as in~\eqref{eq:pfaffian-def}. For
a fixed matching, a candidate bijection is determined by the
numbering~$\sigma$ that assigns the block $\{p_l, q_l\}$ to the
ghost pair $j = \sigma(l)$: candidacy (\Cref{def:candidate})
fixes the slot assignment within each ghost pair, so each
matching contributes exactly $k!$ candidate bijections, one per
numbering.

\smallskip\noindent
\emph{The sign is the Pfaffian sign.}
Consider the candidate bijection for matching~$\mu$ and
numbering~$\sigma$. Listing the actors in the role order of
the roles (\Cref{def:role-order}; with $s = 0$ there are no
survivor slots), namely
$(1,1), (1,2), \ldots, (k,1), (k,2)$, produces the sequence whose
$j$-th block is $(q_{\sigma^{-1}(j)}, p_{\sigma^{-1}(j)})$ when
$\varepsilon_j = +1$ (candidacy puts the higher-indexed actor in
slot~$(j,1)$) and $(p_{\sigma^{-1}(j)}, q_{\sigma^{-1}(j)})$ when
$\varepsilon_j = -1$. Hence
\[
\sgn(\pi)
= \sgn(p_1, q_1, \ldots, p_k, q_k) \cdot
  (-1)^{\#\{j :\, \varepsilon_j = +1\}},
\]
since permuting the size-two blocks (which removes~$\sigma$) is
an even permutation and each block reversal is a transposition.
The formal variable relations~\eqref{eq:formal} contribute a
second factor $(-1)^{\#\{j :\, \varepsilon_j = +1\}}$: the
relation yields $-1$ when $\varepsilon_j = +1$ and $+1$
when $\varepsilon_j = -1$. The two sign-pattern-dependent factors
cancel, leaving the Pfaffian sign
$\sgn(p_1, q_1, \ldots, p_k, q_k)$, independent of the ghost
signs and of the numbering.

\smallskip\noindent
\emph{Summing over final states.}
Fix the matching~$\mu$ and sum over all ghost positions, grouped
by the resulting sign pattern. The numbering~$\sigma$ merely
relabels which ghost pair carries which block of~$\mu$, so after
the sum over positions the $k!$ numberings contribute equally,
matching the factor $k!$ on the left-hand side; cancelling it
leaves a single copy per matching. (The cancellation is valid
over every commutative ring: with the edge weights taken as
indeterminates over~$\ZZ$, both sides live in a torsion-free
ring, and the identity specializes.) For that copy the sum
factorizes over the blocks. A block $\{I, J\}$ with $I < J$
occupying ghost pair~$j$
contributes, for $\varepsilon_j = +1$ (positions
$a_j \preceq b_j$, actor~$J$ at the leftward position),
$\sum_{a \preceq b} W(x_I \to b)\, W(x_J \to a)$, and for
$\varepsilon_j = -1$ (positions $b_j \prec a_j$, actor~$I$ at
the rightward position),
$\sum_{b \prec a} W(x_I \to a)\, W(x_J \to b)$. Together:
\[
2\sum_{a \prec b} W(x_I \to b)\, W(x_J \to a)
+ \sum_c W(x_I \to c)\, W(x_J \to c) = A_{IJ}.
\]

Multiplying over the blocks and summing over matchings with the
Pfaffian sign gives, by~\eqref{eq:pfaffian-def},
\[
w = \sum_{\mu} \sgn(p_1, q_1, \ldots, p_k, q_k)
    \prod_{l=1}^{k} A_{p_l q_l}
  = \Pf(A). \qedhere
\]
\end{proof}

\begin{example}[Four particles, two pairs]\label{ex:pfaffian-n4}
Four particles at $x_1 \leq x_2 \leq x_3 \leq x_4$ undergo
pairwise coalescence: pair $\{x_1, x_2\}$ and pair $\{x_3, x_4\}$
each merge by time~$T$ 
(including the case when all four particles coalesce together). 
By the cancellative labeling, this is
equivalent to complete annihilation of all four particles.
The $4 \times 4$ antisymmetric matrix~$A$ has entries
$A_{IJ}$ given by~\eqref{eq:pairwise}, and the Pfaffian
expands as
\[
w = \Pf(A) = A_{12}\, A_{34}
  - A_{13}\, A_{24}
  + A_{14}\, A_{23}.
\]
The three terms correspond to the three perfect matchings of
$\{1, 2, 3, 4\}$. The matching $\{(1,3),(2,4)\}$ (sign
$-1$) is not physically realizable: particles $x_1$ and
$x_3$ cannot meet without $x_2$ intervening first (the
consecutive collision property). The remaining matchings
$\{(1,2),(3,4)\}$ and $\{(1,4),(2,3)\}$ (both sign $+1$)
are realizable---the first when adjacent pairs annihilate
directly, the second when $\{x_2, x_3\}$ annihilate first
and then $\{x_1, x_4\}$ meet. However, each product
$A_{ij}\, A_{kl}$ counts all path configurations where the
two pairs meet, including ones with wrong collision order
or unexpected crossings. The counterterm $A_{13}\, A_{24}$
cancels exactly these spurious contributions, by the
sign-reversing involution.
\end{example}

\subsubsection{Random walks with i.i.d.\ steps}

\Cref{thm:pfaffian} applies to any spacetime graph
satisfying the standing assumptions. A natural special
case is $n = 2k$ independent random walks on~$\ZZ$
with common step probabilities: at each discrete time
step, each particle jumps $+1$ with
probability~$p$ or $-1$ with probability~$q = 1-p$,
independently of all other particles. The starting
positions must share a common parity: two $\pm 1$
walkers of opposite parity always occupy distinct
sublattices, so they can exchange order without ever
meeting, violating the crossing
property~\ref{item:crossing}. We therefore take the
$x_i$ all of the same parity, so that every gap
$x_{i+1} - x_i$ is even.

\begin{corollary}[Biased random walk on~$\ZZ$]
\label{cor:iid-walk}
For $n = 2k$ particles at positions
$x_1 < \cdots < x_n$ on~$\ZZ$, all of the same parity,
each performing
independent $\pm 1$ random walks with jump
probabilities $(p, q)$ for $T$~steps, the total
weight of pairwise coalescence is $\Pf(A)$, where
$A$ is the $n \times n$ antisymmetric matrix with
entries
\begin{equation}\label{eq:pairwise-iid}
A_{IJ} \;=\; 2\!\!\sum_{y_1 < y_2}
  w_T(x_I, y_2)\, w_T(x_J, y_1)
  \;\;+\; \sum_{y} w_T(x_I, y)\, w_T(x_J, y)
\end{equation}
for $I < J$, and the transition weight is
\begin{equation}\label{eq:transition-iid}
w_T(x, y) \;=\;
  \binom{T}{\frac{T + y - x}{2}}\,
  p^{\frac{T + y - x}{2}}\,
  q^{\frac{T - y + x}{2}},
\end{equation}
with $w_T(x, y) = 0$ unless
$\tfrac{T + y - x}{2}$ is a non-negative integer
at most~$T$. Both sums in~\eqref{eq:pairwise-iid}
range over all integers~$y$ (respectively pairs
$y_1 < y_2$) of the same parity as $x_I - T$
and~$x_J - T$.
\end{corollary}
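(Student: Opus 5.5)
The plan is to deduce \Cref{cor:iid-walk} directly from \Cref{thm:pfaffian}. The spacetime graph is the checkerboard lattice of \Cref{fig:intro-annihilation}. Its vertices are the pairs $(t, y)$ with $0 \le t \le T$ and $y \equiv x_1 + t \pmod 2$. Its edges go from $(t,y)$ to $(t+1, y+1)$ with weight~$p$ and to $(t+1, y-1)$ with weight~$q$. The source set is $\mathcal{X} = \{(0, x_I)\}$ and the target set is $\mathcal{Y}$, the time-$T$ layer, both ordered by the spatial coordinate. Because all $x_I$ share a parity, every source lies on one sublattice, and every walker occupies the same sublattice at each time. With these weights the path generating function $W(x_I \to y)$ is the probability of moving from $x_I$ to $y$ in $T$ steps, so the total weight of the event is its probability.

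The first step is to check the planarity hypotheses of \Cref{def:planar} for this graph. For the crossing property~\ref{item:crossing}, take two lattice paths on a common sublattice with $x \prec x'$ and swapped endpoints $y' \prec y$. The difference of their positions is even and changes by $0$ or $\pm 2$ per step. It goes from a positive value to a negative one, so by a discrete intermediate value argument it vanishes at some time, and the paths share a vertex there. For the consecutive collision property~\ref{item:consecutive}, suppose the paths from $x$ and $x''$ meet first at a vertex $v$ at time $t_0$, and the middle path from $x'$ avoids both before time $t_0$. The same parity argument shows the middle path stays strictly between them up to time $t_0-1$. At time $t_0$ the two outer paths coincide at $v$, and the middle one, still wedged between them by steps of size~$1$, must also be at~$v$. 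The paper already asserts both properties for random walks on~$\ZZ$, citing~\cite{SU2026coalescence}, so this step can be a brief verification.

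The second step is to compute the entries. A path from $x$ to $y$ in $T$ steps has $u$ up-steps and $T-u$ down-steps with $u - (T-u) = y - x$, so $u = (T+y-x)/2$. Counting the arrangements of these steps gives $W(x\to y) = \binom{T}{u} p^u q^{T-u}$, which is \eqref{eq:transition-iid}; it vanishes unless $u$ is an integer in $[0,T]$. Substituting this into \eqref{eq:pairwise} gives \eqref{eq:pairwise-iid}, with the first sum over final vertices $y_1 \prec y_2$ and the second over final vertices $y$. All final vertices have the parity of $x_I - T$, which is the same as the parity of $x_I + T$, so the summation ranges match those stated.

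The last step is to invoke \Cref{thm:pfaffian}, which gives $w = \Pf(A)$. The only substantive point is the planarity check in the first step, and it is standard. The entry computation is routine bookkeeping.
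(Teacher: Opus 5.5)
Your proposal is correct and follows the paper's proof. Both take the $\pm1$ spacetime graph with edge weights $p$ and $q$, compute $W(x\to y)=w_T(x,y)$ by counting up-steps, substitute into \eqref{eq:pairwise}, and invoke \Cref{thm:pfaffian}. The paper does not re-verify planarity and instead defers it to the companion paper, so your parity argument for \ref{item:crossing} and \ref{item:consecutive} is an added check; as stated, \ref{item:consecutive} concerns any meeting vertex, not only the first, but that case reduces at once to the first meeting.
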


\begin{proof}
The spacetime graph is the path graph
$\{0, 1, \ldots, T\} \times \ZZ$ with edges
$(t, z) \to (t+1, z \pm 1)$ weighted by $p$
and~$q$. The path generating function
$W(x \to y) = w_T(x, y)$ is the binomial transition
probability~\eqref{eq:transition-iid}. Substituting
into~\eqref{eq:pairwise} gives~\eqref{eq:pairwise-iid}.
\end{proof}

The first sum in~\eqref{eq:pairwise-iid} counts
crossing path pairs: particle~$I$ (starting left)
ends strictly right of particle~$J$ (starting right).
The second sum counts pairs ending at the same
position. For $p = q = \tfrac{1}{2}$, the transition
weight simplifies to
$w_T(x,y) = \binom{T}{(T+y-x)/2} / 2^T$.

\subsection{Discussion}
\label{sec:pfaffian-discussion}

\subsubsection{Annihilation proves coalescence}

The proof uses the annihilation formula to establish a result about
coalescence. This interplay between the two models is made
possible by the cancellative labeling.
In the physics literature, Masser and
ben-Avraham~\cite{MasserBenAvraham2001} showed that the
$n$-point correlation functions of annihilation and
coalescence are asymptotically identical at large times;
the cancellative labeling makes this connection exact at
the level of individual configurations. On the coalescence
side, Glinyanaya and
Fomichov~\cite{GlinyanayaFomichov2018} proved a
central limit theorem for the cluster
count with Fano factor (variance-to-mean ratio)
$3 - 2\sqrt{2} \approx 0.172$; a central limit theorem of
the same flavor, for the wall count, is derived in the
companion paper~\cite{Sniady2026pfaffian} from the Pfaffian
formula proved here.

\subsubsection{From determinant to Pfaffian}

The annihilation formula is a single determinant. For complete
annihilation, all columns are ghost-pair columns, and marginalizing
over ghost positions produces a Pfaffian. With survivors, the formula
remains a determinant (survivor columns contribute plain transition
weights). The ghost structure thus interpolates between determinants
(with survivors) and Pfaffians (without).

\subsubsection{Relation to prior Pfaffian formulas}

For Brownian motion, \Cref{thm:pfaffian} recovers a
known result. Theorem~3 of Tribe and
Zaboronski~\cite{TZ2011} expresses the even product
moments of annihilating Brownian motions as Pfaffians
of pairwise moments; specializing to the test function
$g \equiv 0$ yields the complete extinction probability
as $\Pf$ of pairwise extinction probabilities. Garrod,
Poplavskyi, Tribe, and
Zaboronski~\cite{GarrodPTZ2018} proved the analogous
identity for discrete nearest-neighbor walks (Lemma~7
in~\cite{GarrodPTZ2018}). Both proceed by the
generator/ODE argument recalled in
\Cref{sec:pfaffian-point-processes}, in contrast to the
combinatorial route taken here.

In an earlier, separate direction,
Mattera~\cite{Mattera2003} established a bijection
between annihilating random walk configurations and
perfect matchings (dimers) of a planar spacetime
graph, and observed that the remaining particles
form a Pfaffian point process. Mattera's setting is
a single specific lattice (simple random walk on
$\ZZ/n\ZZ$ with synchronous updates),
and his Pfaffian is the Kasteleyn Pfaffian of the
spacetime graph.

\Cref{thm:pfaffian} extends this identity to arbitrary
planar weighted directed acyclic graphs and provides a
combinatorial
explanation: the Pfaffian appears because annihilating
particles pair up in perfect matchings, and
marginalizing over ghost positions factorizes across
pairs.

\subsubsection{Connection to gap statistics}

Via \emph{checkerboard duality}, the companion
paper~\cite{Sniady2026pfaffian} applies the Pfaffian
formula proved here to the walls of a coalescing system,
obtaining empty-interval probabilities and linking the
ghost framework to Pfaffian point process
theory~\cite{TZ2011,GarrodPTZ2018}.

\appendix
\crefalias{section}{appendix}
\crefname{appendix}{Appendix}{Appendices}
\Crefname{appendix}{Appendix}{Appendices}

\section{Prescribed annihilation: computational evidence}
\label{sec:prescribed}

The annihilation formula (\Cref{thm:annihilation}) introduces
ghost particles to restore the $n \times n$ matrix after
annihilation has reduced the number of surviving particles, and
labels the resulting ghost pairs by a uniform random
numbering, since ghosts do not remember which initial
particles produced them
(\Cref{sec:setup-annihilation}). Can the formula
be refined to specify which particles annihilate---that is, can
one obtain a Karlin--McGregor-type expression for the weight of
a prescribed annihilation pattern? For $n = 2$ the question
is vacuous (there is only one pair).

\subsection{Prescribed annihilation}
\label{sec:prescribed-problem}

\begin{definition}[Prescribed annihilation]
\label{def:prescribed}
Given $n$ particles at $x_1 \prec \cdots \prec x_n$, fix
adjacent particles $x_m$ and $x_{m+1}$. The \emph{prescribed
annihilation weight} $Z_{\text{prescribed}}$ is the total weight
of all evolutions (joint realizations of all $n$ particle
trajectories) in which $x_m$ and~$x_{m+1}$ annihilate
(producing a ghost pair at positions $a \prec b$) while the
remaining $n - 2$ particles reach prescribed survivor
positions, without colliding with the annihilating pair
before annihilation.
\end{definition}

Since the ghost pair's origin is determined by the constraint,
the question is whether $Z_{\text{prescribed}}$ can be expressed
as a linear combination of Karlin--McGregor products:
\begin{equation}\label{eq:prescribed-hope}
Z_{\text{prescribed}} =
\sum_{\pi \in S_n} c_\pi \prod_{i=1}^{n}
W(x_i \to z_{\pi(i)}),
\end{equation}
with rational coefficients $c_\pi$ independent of the positions
and of time. Here $z_1, \ldots, z_n$ denote the $n$ final
positions (survivors and ghosts combined).

\subsection{Computation}
\label{sec:prescribed-computation}

For $n = 3$ (the first nontrivial case), on the $\ZZ$ lattice
(simple random walk with $\pm 1$ steps), the prescribed
annihilation weight can be computed exactly by
enumerating all $2^{nt}$ path evolutions and filtering to those
satisfying the constraints of \Cref{def:prescribed}. For a
fixed ordering of the $n$ final positions, distinct
final-position tuples provide linear equations in the $n!$
unknowns~$c_\pi$ of~\eqref{eq:prescribed-hope}. The resulting
system is solved over~$\mathbb{Q}$ in exact arithmetic. (The
same technique was used to discover the annihilation formula of
the present paper and the coalescence formula of the companion
paper~\cite{SU2026coalescence}; it succeeds when a formula
of the form~\eqref{eq:prescribed-hope} exists.)

With initial positions
$x_i = 2(i-1)$ and annihilating pair $(1,2)$, at time $t = 5$ the
ordering $a \prec y_1 \prec b$ yields $10$ distinct tuples
$(a, y_1, b)$:
%
%
\[
\begin{gathered}
(-3, -1, 1), \quad (-3, -1, 3), \quad (-3, -1, 5), \quad
(-3, 1, 3), \quad (-3, 1, 5), \\
(-3, 3, 5), \quad (-1, 1, 3), \quad (-1, 1, 5), \quad
(-1, 3, 5), \quad (1, 3, 5).
\end{gathered}
\]
The resulting $10 \times 6$ system over~$\mathbb{Q}$ is
inconsistent: the coefficient matrix has rank~$6$---the six
products $\prod_i W(x_i \to z_{\pi(i)})$ are linearly
independent as functions of the tuple---while the augmented
matrix has rank~$7$. No rational coefficients~$c_\pi$ satisfy
all ten equations; already some subsystems of six equations are
inconsistent, while every subsystem of five or fewer equations
admits a solution.
The computation, including the tuples, the exact weights, and the
rank certificate, is reproduced by the archived companion
code~\cite{Sniady2026companion}.

\subsection{Interpretation}
\label{sec:prescribed-interpretation}

The inconsistency is not a failure of a specific ansatz such as
a determinant or permanent: every expression of the
form~\eqref{eq:prescribed-hope} is a linear combination of the
$n!$ products $\prod_i W(x_i \to z_{\pi(i)})$, and the linear
system tests all such combinations simultaneously. Moreover, the
computation already grants the weakest version of the ansatz: it
restricts to a single ordering class $a \prec y_1 \prec b$, so
the coefficients are allowed to depend on the ordering of the
final positions, and even a single time horizon $t = 5$ is
used. No rational coefficients~$c_\pi$ exist even under these
relaxations.

Ghost anonymity is not merely a convenient bookkeeping device;
the computation above suggests that it is essential for the
existence of the formula.

\section*{Acknowledgments}

We thank Maciej Dołęga, Marcin Lis, Sho Matsumoto,
and Karol Życzkowski for stimulating discussions and
bibliographic suggestions.

Claude Code (Anthropic) was used as an assistant during formula
discovery and manuscript preparation.

Research supported by grant 2025/59/B/ST1/01258 of the
National Science Centre, Poland.

\section*{Data availability}

No datasets were generated or analysed during the current study;
the results are theoretical. The companion software that numerically
verifies the formulas and reproduces the computations reported in the
paper is openly available in the Zenodo repository,
\url{https://doi.org/10.5281/zenodo.21218341}~\cite{Sniady2026companion}.

\printbibliography

\end{document}